\numberwithin{equation}{section}
\numberwithin{figure}{section}
\theoremstyle{plain}
\newtheorem{thm}{Theorem}[section]
\newtheorem{lemma}[thm]{Lemma}
\newtheorem{prop}[thm]{Proposition}
\newtheorem{cor}[thm]{Corollary}
\theoremstyle{definition}
\newtheorem{defn}[thm]{Definition}
\newtheorem{example}[thm]{Example}
\newtheorem{remark}[thm]{Remark}
\newtheorem{notation}[thm]{Notation}
\newtheorem{construction}[thm]{Construction}
\newtheorem{outline}[thm]{Outline}
\newcommand{\Z}{\mathbb{Z}}
\newcommand{\R}{\mathbb{R}}
\newcommand{\N}{\mathbb{N}}
\newcommand{\Hy}{\mathbb{H}}
\newcommand{\cB}{\mathcal{B}}
\newcommand{\cC}{\mathcal{C}}
\newcommand{\cG}{\mathcal{G}}
\newcommand{\cH}{\mathcal{H}}
\newcommand{\cM}{\mathcal{M}}
\newcommand{\cN}{\mathcal{N}}
\newcommand{\cQ}{\mathcal{Q}}
\newcommand{\cS}{\mathcal{S}}
\newcommand{\cV}{\mathcal{V}}
\newcommand{\cZ}{\mathcal{Z}}
\newcommand{\gL}{\Lambda}
\newcommand{\G}{\Gamma}
\newcommand{\Int}{\operatorname{Int}}
\newcommand{\la}{\langle}
\newcommand{\ra}{\rangle}
\newcommand{\p}{\partial}
\definecolor{amethyst}{rgb}{0.6, 0.4, 0.8}
\newcommand{\hide}[1]{}
\title{Detecting a subclass of torsion-generated groups}
\author{Emily Stark}
\address{Department of Mathematics\\
Technion - Israel Institute of Technology \\
Haifa 32000 \\
Israel }
\email{emily.stark@technion.ac.il}
\date{\today}
\thanks{The author is thankful for support from the Azrieli Foundation. The author was partially supported at the Technion by a Zuckermann STEM Leadership Postdoctoral Fellowship.}
\begin{document}

\begin{abstract}
   We classify the groups quasi-isometric to a group generated by finite-order elements within the class of one-ended hyperbolic groups which are not Fuchsian and whose JSJ decomposition over two-ended subgroups does not contain rigid vertex groups. To do this, we characterize which JSJ trees of a group in this class admit a cocompact group action with quotient a tree. The conditions are stated in terms of two graphs we associate to the degree refinement of a group in this class. We prove there is a group in this class which is quasi-isometric to a Coxeter group but is not abstractly commensurable to a group generated by finite-order elements. Consequently, the subclass of groups in this class generated by finite-order elements is not quasi-isometrically rigid.  We provide necessary conditions for two groups in this class to be abstractly commensurable. We use these conditions to prove there are infinitely many abstract commensurability classes within each quasi-isometry class of this class that contains a group generated by finite-order elements. 
\end{abstract}

\maketitle

\section{Introduction} \label{sec:intro}

  The large-scale geometry type of a finitely generated group does not depend on whether the group contains elements of finite order; every finitely generated group is quasi-isometric to a group that contains torsion. However, quasi-isometry classes that contain groups generated by finite-order elements are distinguishable. Torsion-generated groups, such as Coxeter groups, play an important role in geometric group theory. Background is given by Davis~\cite{davis}. An interesting problem is to determine which finitely generated groups are quasi-isometric to a group generated by finite-order elements. In this paper, we solve this problem within a certain class of hyperbolic groups. 
  
  A natural approach to this problem begins by decomposing the group using a graph of groups decomposition. Dunwoody~\cite{dunwoody} proved every finitely presented group admits a maximal splitting as a graph of groups with finite edge groups, and Papasoglu--Whyte~\cite{papasogluwhyte} proved for a infinite-ended finitely presented group, the set of quasi-isometry classes of the one-ended vertex groups in this graph of groups decomposition is a complete quasi-isometry invariant. Therefore, an infinite-ended finitely presented group is quasi-isometric to a group generated by finite-order elements if and only if each of the one-ended vertex groups in this graph of groups decomposition is quasi-isometric to a group generated by finite-order elements. Thus, for finitely presented groups, the problem reduces to the case the group is one-ended. 
      
  Rips--Sela~\cite{ripssela} proved if $G$ is a one-ended finitely presented group that is not Fuchsian, then there is a canonical graph of groups decomposition of~$G$, called the {\it JSJ decomposition of $G$}, with edge groups that are $2$-ended and vertex groups of three types: $2$-ended; {\it maximally hanging Fuchsian}; and, quasi-convex {\it rigid} vertex groups not of the first two types. In this paper, we follow the language and structure of the JSJ decomposition due to Bowditch~\cite{bowditch}  for one-ended hyperbolic groups that are not Fuchsian. We characterize the groups quasi-isometric to a group generated by finite-order elements within the class $\cC$ of $1$-ended hyperbolic groups that are not Fuchsian and whose JSJ decomposition does not contain rigid vertex groups. 
      
  The isomorphism type of the Bass--Serre tree of the JSJ decomposition of a group in $\cC$ is a complete quasi-isometry invariant, as shown by Malone \cite{malone} for a subclass of groups in $\cC$ called geometric amalgams of free groups and by Cashen--Martin \cite{cashenmartin} in the general setting; see also related work of Dani--Thomas \cite{danithomas}. Furthermore, there is a one-to-one correspondence between isomorphism types of JSJ trees of groups in $\cC$ and (equivalence classes of) certain finite matrices called {\it degree refinements}, which are algorithmically computed from the JSJ~decomposition.   
      
  A group generated by finite-order elements does not surject onto $\Z$. Consequently, if a group $G \in \cC$ is generated by finite-order elements, then the underlying graph of the JSJ decomposition of $G$ is a tree; we call this graph the {\it JSJ graph of $G$}. Conversely, Dani--Stark--Thomas \cite[Theorem~1.16]{danistarkthomas} proved if a quasi-isometry class in $\cC$ contains a group whose JSJ graph is a tree, then the quasi-isometry class contains a right-angled Coxeter group. Therefore, classifying the quasi-isometry classes within $\cC$ which contain a group generated by finite-order elements is equivalent to classifying the JSJ trees of a group in $\cC$ which admit a cocompact group action with quotient a tree. To accomplish this, we introduce two graphs associated to the degree refinement of a group $G \in \cC$: the {\it graph of blocks} of $G$ and the {\it augmented graph of blocks} of $G$. The graph of blocks of $G$ has vertex set in one-to-one correspondence with orbits of vertices in the JSJ tree of $G$ under the action of the full isometry group of the tree. The augmented graph of blocks of $G$ is the graph with fewest vertices and degree refinement equivalent to the degree refinement of $G$. See Section~\ref{sec:deg_ref} for more details. The first main result of the paper is the following.

  \begin{thm} \label{thm_sec1_qi_char}
   Let $G \in \cC$. The following are equivalent.
   \begin{enumerate}
    \item The group $G$ is quasi-isometric to a right-angled Coxeter group.
    \item The group $G$ is quasi-isometric to a group generated by finite-order elements.
    \item The group $G$ is quasi-isometric to a group with JSJ graph a tree.
    \item The degree refinement of $G$ satisfies the two conditions:
      \begin{itemize}
	\item[(M1)] The graph of blocks of $G$ is a tree.
	\item[(M2)] The augmented graph of blocks of $G$ has no $2$-cycles at even distance bounded by Type I vertices. 
      \end{itemize}
   \end{enumerate}
  \end{thm}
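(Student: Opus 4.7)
The plan is to establish the cyclic chain $(1) \Rightarrow (2) \Rightarrow (3) \Rightarrow (1)$ as a short warm-up using results already cited in the introduction, and then to prove the main technical equivalence $(3) \Leftrightarrow (4)$ via an analysis of cocompact actions on the JSJ tree $T$ of $G$.

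For $(1) \Rightarrow (2)$ there is nothing to show, since a right-angled Coxeter group is generated by involutions. For $(2) \Rightarrow (3)$, suppose $G$ is quasi-isometric to a group $G'$ generated by finite-order elements. Appealing to the Cashen--Martin theorem, the JSJ tree is a complete quasi-isometry invariant within $\cC$, and the absence of rigid vertices is a tree-theoretic condition, so $G' \in \cC$ as well. Since $G'$ is torsion-generated, its abelianization is torsion and $G'$ does not surject onto $\Z$; but any cycle in the underlying JSJ graph of $G'$ would produce an HNN splitting and hence a surjection onto $\Z$, so the JSJ graph of $G'$ must be a tree. Finally, $(3) \Rightarrow (1)$ is \cite[Theorem~1.16]{danistarkthomas}.

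For $(3) \Leftrightarrow (4)$, Cashen--Martin again reduces the question: $G$ is quasi-isometric to some group in $\cC$ with tree JSJ graph if and only if $T$ admits a cocompact action by some subgroup $H \leq \Aut(T)$ with $T/H$ a tree. For the forward direction, assume such an $H$ exists. The graph of blocks $T/\Aut(T)$ is a quotient of the tree $T/H$, and every cycle of $T/H$ must project to a cycle or a contracted interval in $T/\Aut(T)$; I would argue from this that $T/\Aut(T)$ is also a tree, giving (M1). For (M2), a hypothetical $2$-cycle at even distance between Type I vertices in the augmented graph of blocks reflects a configuration of $\Aut(T)$-orbits that cannot be ``unfolded'' by passing to any finite-index refinement, which I would make precise by tracking how edges are glued along Type I versus maximal hanging Fuchsian stabilizers and showing that the parity condition exactly prevents the needed unfolding.

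For the backward direction, given (M1) and (M2), I would construct an explicit finite graph of groups $\mathcal{G}$ whose underlying graph is a tree, whose vertex groups are chosen (virtually cyclic for Type I, virtually surface for MHF) to realize the degree refinement of $G$, and whose edge groups are two-ended. Condition (M1) provides the block-level tree skeleton, and (M2) guarantees that the remaining refinement data for the augmented graph of blocks can be arranged over this skeleton without creating loops. The Bass--Serre tree of $\mathcal{G}$ then has the same degree refinement as $G$, hence the same JSJ tree by Cashen--Martin, so $\pi_1(\mathcal{G})$ is quasi-isometric to $G$ with JSJ graph a tree. The main obstacle will be the forward direction for (M2): isolating precisely why the parity condition ``at even distance, bounded by Type I vertices'' is the sharp combinatorial obstruction to unfolding a $2$-cycle in the augmented graph of blocks requires a careful case analysis of the bipartite $\Aut(T)$-orbit structure and of how edge orbits can split in passing to finite-index subgroups of $\Aut(T)$.
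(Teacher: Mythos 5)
Your chain $(1)\Rightarrow(2)\Rightarrow(3)\Rightarrow(1)$ is fine and matches the paper (the last implication is exactly \cite[Theorem 1.16]{danistarkthomas}, and the no-surjection-onto-$\Z$ argument for $(2)\Rightarrow(3)$ is the one used in the introduction). The problem is that the main content, $(3)\Leftrightarrow(4)$, is not actually proved. The one concrete argument you offer, for $(3)\Rightarrow$(M1), rests on a false principle: the graph of blocks is (essentially) $T/\Aut(T)$, and while it is a further quotient of the finite tree $T/H$, quotients of trees by additional identifications can certainly create cycles --- indeed that is precisely the phenomenon (M1) is designed to detect, and the middle example of Figure~\ref{figure:ex_3} is a JSJ \emph{tree} whose graph of blocks is not a tree. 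Observing that $T/H$ has no cycles to project says nothing about cycles created by identification, so your (M1) step has a genuine gap. The paper's argument is different and essentially combinatorial: after transferring the (augmented) graph of blocks from $G$ to a quasi-isometric group $G'$ with JSJ graph a finite tree (Corollary~\ref{cor_same_block_gr}), one shows directly (Lemmas~\ref{lemma:no_cycle} and~\ref{lemma:bad_double}) that an embedded cycle in the graph of blocks, or a forbidden pair of double edges in the augmented graph of blocks, would force a connected component of an induced subgraph of the finite JSJ tree in which every vertex has valence at least two --- impossible for a finite tree, which must have a leaf. Finiteness of the JSJ \emph{graph} is what makes this work; nothing in your sketch plays that role.

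For $(3)\Rightarrow$(M2) and $(4)\Rightarrow(3)$ you state intentions (``I would make precise,'' ``can be arranged over this skeleton without creating loops'') rather than arguments, and you yourself flag the (M2) direction as unresolved. In the paper, (M2) follows from the same leaf argument, and the converse is carried by an explicit procedure (Definition~\ref{def:split}, Construction~\ref{const:new_graph}): one starts from the augmented graph of blocks $\G_0$, which already has the correct degree refinement, and recursively ``splits'' MHF vertices to unwrap the $2$-cycles, where (M1) guarantees that the interiors of a multi-edge separate the graph (so the split is defined), (M2) controls the height function so splits propagate away from the base vertex and the recursion terminates, and Lemma~\ref{lemma:deg_ref_pres} shows each move preserves the degree refinement; Theorem~\ref{thm:qi_same_deg_ref} then produces the quasi-isometric group with JSJ graph a tree. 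If you want to salvage your outline, you need (a) a correct mechanism ruling out cycles created in the quotient --- e.g.\ the finite-tree/leaf argument --- and (b) an actual construction realizing the degree refinement over a tree, not just the block-level skeleton; merely choosing vertex groups ``to realize the degree refinement'' begs the question of whether a finite tree with that degree refinement exists, which is exactly what (M1) and (M2) are needed to establish.
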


  The equivalence of Conditions (1)-(3) was established in \cite{danistarkthomas}; see Section~\ref{sec:char}. We prove here that (1)-(3) are equivalent to (4), a pair of conditions defined in Section~\ref{sec:deg_ref} that can be easily verified. Thus, from the underlying graph of the JSJ decomposition of a group in $\cC$, one may determine whether Conditions (1)-(3) hold. The graphs in Condition (4) are quasi-isometry invariants (Corollary~\ref{cor_same_block_gr}), and we prove Conditions (M1) and (M2) hold for a group in $\cC$ with JSJ graph a tree. Conversely, if Conditions (M1) and (M2) hold, we perform a finite series of moves on the augmented graph of blocks of the group to obtain a finite tree with an equivalent degree refinement. 
  
  Two groups are {\it abstractly commensurable} if the groups contain finite-index subgroups that are isomorphic; two finitely generated groups that are abstractly commensurable are quasi-isometric. Hence, the result in Theorem~\ref{thm_sec1_qi_char} gives a partial answer to \cite[Question 1.3]{danistarkthomas}, which asks which geometric amalgams of free groups are abstractly commensurable to a right-angled Coxeter group. However, the next result proves the set of groups quasi-isometric to right-angled Coxeter groups in~$\cC$ is strictly larger than the set of groups abstractly commensurable to a right-angled Coxeter group in~$\cC$.

  \clearpage
  
   \begin{thm} \label{thm:sec_1_qi_notAC}
    There exists $G \in \cC$ for which the following holds.
      \begin{enumerate}
       \item The group $G$ is quasi-isometric to a right-angled Coxeter group. 
       \item The group $G$ is not abstractly commensurable to any group with JSJ graph a tree. In particular, $G$ is not abstractly commensurable to any group generated by finite-order elements.
      \end{enumerate}
   \end{thm}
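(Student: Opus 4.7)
The plan is to exhibit a single explicit $G\in\cC$ whose JSJ graph contains a cycle — so is not a tree — but whose block invariants satisfy (M1) and (M2). Part~(1) then follows immediately from Theorem~\ref{thm_sec1_qi_char}, while part~(2) reduces to showing that, within $\cC$, abstract commensurability cannot turn the cycle in $G\backslash T$ into a tree JSJ graph.

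For the construction, I would take $m\geq 3$ isomorphic copies $\Sigma_1,\ldots,\Sigma_m$ of a four-holed sphere and identify boundary curves symmetrically: for each $j\in\{1,2,3,4\}$, glue the $j$-th boundary component of every $\Sigma_i$ to the $j$-th boundary component of every other $\Sigma_{i'}$, so that exactly $m$ surface pieces meet along each of the four resulting curves. Because $m\geq 3$ sheets meet along each curve, the resulting $2$-complex is non-Fuchsian, and its fundamental group $G$ is one-ended, hyperbolic, and lies in~$\cC$. The JSJ decomposition has $m$ MHF vertex groups (the $\pi_1(\Sigma_i)$), four two-ended vertex groups (the curve stabilizers), and underlying graph the complete bipartite graph $K_{m,4}$, whose first Betti number is $3m-3\geq 6$; in particular, the JSJ graph of $G$ is not a tree. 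By symmetry, $\Aut(T)$ acts transitively on each side of the bipartition, so the graph of blocks of $G$ collapses to two vertices joined by a single edge — a tree — yielding (M1). Condition (M2) is then a finite local check on the augmented graph of blocks, which in this highly symmetric setting reduces to a computation of degrees and link patterns. Theorem~\ref{thm_sec1_qi_char} then gives that $G$ is quasi-isometric to a right-angled Coxeter group, establishing~(1).

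For~(2), suppose for contradiction that $H\in\cC$ is abstractly commensurable to $G$ and that $H$'s JSJ graph is a tree. Choose finite-index subgroups $K_1\leq G,\ K_2\leq H$ with $K_1\cong K_2=:K$. By canonicity of the JSJ decomposition of one-ended hyperbolic groups in~$\cC$ under finite-index inclusion, $G$, $H$, and $K$ act cocompactly on a common tree $T$, and $K\backslash T$ is a graph-of-groups cover of both $G\backslash T$ and $H\backslash T$. I would then invoke the necessary commensurability conditions developed later in the paper, combined with the quasi-isometry invariance of the block structure (Corollary~\ref{cor_same_block_gr}): these force the underlying graphs and vertex-group data of $G$ and $H$ to agree modulo a controlled branched-cover relationship. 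For the symmetric example above, this compatibility is rigid enough that a cycle in $G\backslash T$ must lift to $K\backslash T$ and descend to a cycle in $H\backslash T$, contradicting the tree hypothesis. Since any group in $\cC$ generated by finite-order elements has JSJ graph a tree, this completes~(2).

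The main obstacle is the cycle-persistence step. Ordinary topological graph covers cannot flatten cycles, but graph-of-groups covers can in principle absorb topological cycles into vertex-group structure, so invariance of the property ``JSJ graph contains a cycle'' is not automatic from covering-space considerations alone. The symmetric construction is chosen precisely to prevent this absorption: the MHF and two-ended vertex groups and their incidence pattern are rigidly pinned down by the degree refinement, so any tree-type $H\in\cC$ commensurable to $G$ would force $K\backslash T$ to carry a block structure incompatible with the $K_{m,4}$-shaped JSJ graph of~$G$.
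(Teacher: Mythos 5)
Your part (1) is fine in outline and matches what the paper does (exhibit a tree-JSJ group with the same degree refinement and apply Theorem~\ref{thm_sec1_qi_char}, as in Lemma~\ref{lemma:G_qi_racg}). The gap is in part (2), which is the entire content of the theorem. You never produce an actual commensurability obstruction, and none of the tools you invoke can supply one for your example. The degree refinement, graph of blocks, and augmented graph of blocks are quasi-isometry invariants (Corollary~\ref{cor_same_block_gr}), and by your own part (1) your group $G$ shares them with a group whose JSJ graph is a tree; so ``the block structure is rigidly pinned down by the degree refinement'' cannot distinguish $G$ from a tree-JSJ group. The block Euler characteristic vector (Proposition~\ref{prop_block_comm}) has a single entry for your fully symmetric $K_{4,m}$ example, since all MHF vertices lie in one block, and any two nonzero single-entry vectors are commensurable -- the paper itself points out this invariant gives nothing in that situation. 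The matching Euler characteristic vector (Proposition~\ref{prop_matching_comm}) is only defined and proved for $P$-manifolds whose underlying graph is a tree, so it does not even apply to your $G$. Finally, the assertion that ``a cycle in $G\backslash T$ must descend to a cycle in $H\backslash T$'' is precisely the statement to be proved: commensurability can change the Betti number of the underlying graph of the JSJ decomposition (this is why \cite[Question 1.3]{danistarkthomas} is nontrivial and why the theorem has content), so no covering-space or symmetry argument of the kind you sketch rules it out.

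The paper's proof works for the opposite reason you chose your example: it builds in an \emph{asymmetry}. In Construction~\ref{const:not_AC} the space $X$ contains two closed surfaces $S_v=C_1\cup C_2$ and $S_v'=C_1'\cup C_2'$ with $\chi(C_1)/\chi(C_2)\neq \chi(C_1')/\chi(C_2')$, and the proof of Theorem~\ref{thm:qi_notAC} uses Lafont's topological rigidity to realize any putative commensurability with a tree-JSJ group $G'$ by a homeomorphism $f:Y\to Y'$ of finite covers, then tracks the vertex groups coming from $f(\Sigma)$, $f(\Sigma')$, $\alpha$, $\beta$, $A$, $B$ (via Proposition~\ref{prop:sub_struc}, Lemma~\ref{lemma:same_block}, Lemma~\ref{lemma:sum_chi}) and uses the ratio asymmetry to show the relevant vertex sets of $\Lambda'$ are disjoint, forcing a cycle in the tree $\Lambda'$. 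In your completely symmetric example all such ratios are equal, so this mechanism evaporates; worse, highly symmetric cycle-JSJ amalgams are exactly the ones that the known commensurability results suggest \emph{are} commensurable to right-angled Coxeter groups (hence to tree-JSJ groups), so claim (2) for your $G$ is not merely unproved but plausibly false. To repair the argument you would need to replace the symmetric gluing by one with an Euler-characteristic asymmetry and then supply the Lafont-plus-subgroup-structure analysis; at that point you are reproducing the paper's proof.
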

   
   To prove Theorem~\ref{thm:sec_1_qi_notAC}, we construct $G$ in Construction~\ref{const:not_AC} as the fundamental group of a union of surfaces with boundary glued together along their boundary components.     While the JSJ tree of $G \cong \pi_1(X)$ has a finite quotient which is a tree, there is an asymmetry in the Euler characteristics of certain subsurfaces in the space $X$ that yields the commensurability result. 
    
   A class of groups $\cG$ is {\it quasi-isometrically rigid} if every group quasi-isometric to a group in $\cG$ is abstractly commensurable to a group in $\cG$. (A slightly different notion of quasi-isometric rigidity requires every group quasi-isometric to a group in $\cG$ to be {\it virtually isomorphic} to a group in $\cG$. These notions are equivalent within $\cC$ as groups in $\cC$ are virtually torsion-free; see \cite[Observation 3.1]{haissinskypaoluzziwalsh} and the book by Dru\c{t}u--Kapovich \cite{drutukapovich} for background.)
    The class of Coxeter groups is not quasi-isometrically rigid. For example, Burger--Mozes \cite{burgermozes} provided examples of (non-hyperbolic) infinite simple groups which act geometrically on the product of two finite-valence trees. Such groups have no finite-index subgroups, yet these groups are quasi-isometric to the direct product of two free groups of rank greater than one, and, hence, are quasi-isometric to a right-angled Coxeter group with defining graph a complete bipartite graph with vertex sets of size greater than two. The class of groups $\cC$ is quasi-isometrically rigid by the construction of the JSJ decomposition given by Bowditch~\cite{bowditch}; see \cite[Observation 3.1]{haissinskypaoluzziwalsh} for a related result. Theorem~\ref{thm:sec_1_qi_notAC} proves the set of right-angled Coxeter groups within the class $\cC$ is not quasi-isometrically rigid, and we also have the following corollary.   
    
  \begin{cor}
   The subclass of groups in $\cC$ which have JSJ graph a tree is not quasi-isometrically rigid. The subclass of groups in $\cC$ which are generated by finite-order elements is not quasi-isometrically rigid. 
  \end{cor}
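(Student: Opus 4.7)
The plan is to observe that the group $G \in \cC$ produced by Theorem~\ref{thm:sec_1_qi_notAC} serves as a single witness to the failure of quasi-isometric rigidity for both subclasses in the statement. For the subclass of groups in $\cC$ whose JSJ graph is a tree, I would apply Theorem~\ref{thm_sec1_qi_char}: since $G$ satisfies condition $(1)$, it also satisfies condition $(3)$, so there is some $H$ quasi-isometric to $G$ whose JSJ graph is a tree (and $H \in \cC$, since in this paper JSJ graphs are defined for groups in $\cC$). Hence $G$ is quasi-isometric to a member $H$ of the subclass, but by Theorem~\ref{thm:sec_1_qi_notAC}(2), $G$ is not abstractly commensurable to any group with JSJ graph a tree, and in particular not to any member of this subclass, so the subclass fails to be quasi-isometrically rigid.

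For the second subclass, groups in $\cC$ generated by finite-order elements, I would use the right-angled Coxeter group $W$ quasi-isometric to $G$ furnished by condition $(1)$ of Theorem~\ref{thm_sec1_qi_char}. By the Dani--Stark--Thomas construction referenced in the introduction, which produces, from a group in $\cC$ whose JSJ graph is a tree, a right-angled Coxeter group in the same quasi-isometry class, $W$ may be taken to lie in $\cC$; since $W$ is generated by its standard involution generators, $W$ is a member of this second subclass. Nevertheless, Theorem~\ref{thm:sec_1_qi_notAC}(2) explicitly states that $G$ is not abstractly commensurable to any group generated by finite-order elements, hence not to $W$ or any other group in the subclass. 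The argument is essentially a direct transcription of Theorem~\ref{thm:sec_1_qi_notAC} into the language of quasi-isometric rigidity, so I anticipate no real obstacle; the only minor point to verify is that the witnesses $H$ and $W$ may indeed be chosen inside $\cC$, which is built into the cited construction.
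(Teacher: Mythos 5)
Your proposal is correct and matches the paper's (implicit) argument: the corollary is intended to follow exactly as you describe, by combining Theorem~\ref{thm:sec_1_qi_notAC} with the equivalences of Theorem~\ref{thm_sec1_qi_char}, where the witnesses can indeed be taken in $\cC$ (e.g.\ the geometric amalgam of free groups $G'$ with JSJ graph the tree $\Omega$ from Lemma~\ref{lemma:G_qi_racg}, and the right-angled Coxeter group from \cite[Theorem~1.16]{danistarkthomas}, whose JSJ graph is a tree). No gaps.
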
  
  
  There are classes of right-angled Coxeter groups which are quasi-isometrically rigid; simple examples include virtually-free right-angled Coxeter groups and right-angled Coxeter groups which act properly and cocompactly by isometries on the hyperbolic plane \cite{tukia,gabai,cassonjungreis}. Determining the classes of right-angled Coxeter groups which are quasi-isometrically rigid is an interesting problem. A natural focus is classes for which quasi-isometry invariants or classification is known; see \cite{danithomas-div,caprace-erra,charneysultan,behrstockhagensisto,behrstockhagensisto-quasiflats,levcovitz16,levcovitz17,haulmarknguyentran}.  
  
  The abstract commensurability classification within $\cC$ remains open. This classification may be an important step in resolving \cite[Question 1.3]{danistarkthomas}. Partial results are given by Crisp--Paoluzzi \cite{crisppaoluzzi}, Malone~\cite{malone}, the author \cite{stark}, and Dani--Stark--Thomas \cite{danistarkthomas}. These known results impose strong conditions on the subclass of groups considered: for example, they require the diameter of the JSJ graph to be at most $4$. In Proposition~\ref{prop_block_comm} and Proposition~\ref{prop_matching_comm} we prove two necessary conditions for commensurability for any geometric amalgam of free groups in $\cC$ with JSJ graph a tree. The commensurability invariants are the commensurability classes of two vectors whose entries record the sum of the Euler characteristics of certain vertex groups.   
  As a consequence, we prove the following theorem.
 
  \begin{thm} \label{thm:sec1_qi_vs_ac}
   There are infinitely many abstract commensurability classes within every quasi-isometry class in $\cC$ that contains a group generated by finite-order elements. 
  \end{thm}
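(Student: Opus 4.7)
The plan is to fix an arbitrary quasi-isometry class $[G]$ in $\cC$ that contains a torsion-generated group, to build inside $[G]$ a one-parameter family of geometric amalgams of free groups all with the same JSJ tree as $G$, and then to separate infinitely many of them into distinct abstract commensurability classes using the Euler-characteristic vector invariants of Propositions~\ref{prop_block_comm} and~\ref{prop_matching_comm}.

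First I would replace $G$ by a convenient model. By Theorem~\ref{thm_sec1_qi_char} together with the theorem of Dani--Stark--Thomas cited in the introduction, $[G]$ contains a geometric amalgam of free groups $H_0 = \pi_1(X_0)$ whose JSJ graph is a tree; here $X_0$ is a finite union of compact surfaces with boundary glued along their boundary components according to the JSJ graph of groups. Since $H_0 \in \cC$ is one-ended and non-Fuchsian, at least one vertex of the JSJ tree is a hanging-surface vertex; fix such a vertex realized by a subsurface $\Sigma \subseteq X_0$ with $k \geq 1$ boundary components.

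Next I would construct the family. For each $n \in \N$ let $\Sigma_n$ be a compact orientable surface with exactly $k$ boundary components and genus $n$, and let $X_n$ be obtained from $X_0$ by excising $\Sigma$ and gluing in $\Sigma_n$ along the same boundary identifications. Set $H_n = \pi_1(X_n)$. Since $\Sigma_n$ has the same number of boundary components as $\Sigma$, the JSJ graph of groups of $H_n$ has the same underlying graph, the same cyclic edge groups, and vertex valences unchanged; in particular $H_n$ and $H_0$ have the same degree refinement, so by Theorem~\ref{thm_sec1_qi_char} every $H_n$ lies in $[G]$. Now by Propositions~\ref{prop_block_comm} and~\ref{prop_matching_comm}, two abstractly commensurable geometric amalgams of free groups in $\cC$ with JSJ graph a tree have invariant Euler-characteristic vectors that agree up to multiplication by a positive rational. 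In our family, only those coordinates of the invariant vector involving $\chi(\Sigma)$ change with $n$, in a controlled way governed by $\chi(\Sigma_n) = 2 - 2n - k$, while all other coordinates are fixed by the unchanged vertex groups of $X_0$; as long as some coordinate is nonzero and independent of $\chi(\Sigma)$, the projective class of the vector takes infinitely many values as $n$ varies, giving infinitely many commensurability classes.

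The hard part will be verifying this last proviso in full generality: for an arbitrary $H_0$ as above, one must be able to choose the hanging surface $\Sigma$ so that at least one coordinate of one of the two invariant vectors is a nonzero combination of Euler characteristics not involving $\chi(\Sigma)$. This requires reading the precise combinatorics of the statements of Propositions~\ref{prop_block_comm} and~\ref{prop_matching_comm} and exploiting the fact that $H_0 \in \cC$ is one-ended, non-Fuchsian, and has JSJ graph a nontrivial tree to guarantee enough hanging-surface vertices, or at least enough other Euler-characteristic contributions, to carry out the separation.
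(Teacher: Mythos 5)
Your construction coincides with the paper's: pick a geometric amalgam of free groups $\pi_1(X)$ in the given quasi-isometry class whose JSJ graph is a tree, replace one subsurface $\Sigma$ by a surface of genus $g$ with the same number of boundary components, observe that the degree refinement (hence the quasi-isometry class) is unchanged, and then try to separate the resulting groups $G_g$ using Propositions~\ref{prop_block_comm} and~\ref{prop_matching_comm}. Up to that point you and the paper agree.

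The step you defer as ``the hard part'' is, however, the actual content, and the criterion you propose for closing it --- choose $\Sigma$ so that some coordinate of one of the two vectors is nonzero and does not involve $\chi(\Sigma)$ --- is not the right one in general. If every two-ended (branching-curve) vertex of the JSJ graph has the same valence, the block Euler characteristic vector has a single entry (this is exactly the degenerate situation described before the definition of the matching vector), so no choice of $\Sigma$ yields a $\Sigma$-independent coordinate, and any two one-entry vectors are commensurable; the block invariant is simply useless there. The paper resolves this with a dichotomy: if not all branching curves of $X_g$ have the same degree, then the block vector has at least two entries (in a finite tree the leaves are hanging Fuchsian vertices adjacent to a single two-ended block, which forces a second Fuchsian block), and the entry coming from a block not containing $\Sigma$ is a fixed nonzero number, so varying $\chi(\Sigma)$ changes the commensurability class; if all branching curves have the same degree, one switches to the matching Euler characteristic vector $(\chi(\cM_1),\ldots,\chi(\cM_n))$ and argues that changing the Euler characteristic of one subsurface changes its commensurability class, invoking Proposition~\ref{prop_matching_comm}. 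Note that in this second case ``a coordinate independent of $\chi(\Sigma)$'' is not the correct requirement anyway: the maximal matchings, and hence which coordinate absorbs $\Sigma$, can change as $g$ varies, so the argument must be phrased in terms of the whole vector. As written, your proposal is therefore incomplete precisely at the point where the two invariants must be played off against each other, which is what the paper's proof (together with its closing remark) supplies.
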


 The conclusion of Theorem~\ref{thm:sec1_qi_vs_ac} is not surprising based on the previous results on commensurability for groups in $\cC$. The novelty of the result comes from removing the hypothesis that the JSJ graphs have small diameter. Moreover, previous results strongly use a topological rigidity theorem of Lafont~\cite{lafont}, which applies only to geometric amalgams of free groups in $\cC$. The result in Proposition~\ref{prop_block_comm} does not require this hypothesis.
  
 The main questions addressed in this paper, whether a given group is quasi-isometric or abstractly commensurable to a right-angled Coxeter group (or, more generally, a group generated by finite-order elements), may be viewed as coarse versions of the problem of determining whether a given group is a right-angled Coxeter group (or a group generated by finite-order elements); see, for example, \cite{charneyruanestambaughvijayan}, \cite{cunninghameisenbergpiggottruane}. 
  
 \subsection*{Acknowledgments}  The author is thankful for helpful discussions with Pallavi Dani, Misha Kapovich, Michah Sageev, Anne Thomas, and Genevieve Walsh. The author is grateful to the anonymous referee for thoughtful comments and corrections.  
 
 \subsection*{Outline} Preliminaries are given in Section~\ref{sec:prelims}. The degree refinement of a group in $\cC$ and related graphs are defined in Section~\ref{sec:deg_ref}. Theorem~\ref{thm_sec1_qi_char} is proven in Section~\ref{sec:char_qi}. Section~\ref{sec_comm_classes} contains the proof of Theorem~\ref{thm:sec_1_qi_notAC}. Section~\ref{sec:comm_cond} contains the proof of necessary conditions for commensurability. Theorem~\ref{thm:sec1_qi_vs_ac} is proven in Section~\ref{sec:qi_vs_ac}. 
 
\section{Preliminaries} \label{sec:prelims}

\subsection{Graph theory}

  In this section, we record relevant graph-theoretic terminology and establish notation. Most graphs we consider are unoriented, and we view these graphs as CW-complexes.

  Let $\Lambda = (V(\Lambda), E(\Lambda))$ be a graph, where $V(\gL)$ is the vertex set of $\gL$ and $E(\gL)$ is the edge set of~$\gL$. If $e = (u,v) \in E(\gL)$, we say $e$ is {\it incident} to the vertices $u$ and $v$, and we say $u$ and $v$ are {\it adjacent} vertices. A graph is {\it bipartite} if $V(\Lambda)$ is the disjoint union of two nonempty subsets $V(\gL) = V_1 \sqcup V_2$ such that every edge of $\gL$ is incident to exactly one element of $V_1$ and exactly one element of $V_2$. A {\it tree} is a connected graph that does not contain an embedded cycle. A {\it leaf} of a graph is a vertex of valence one. An {\it oriented graph} $\gL$ consists of a vertex set $V(\gL)$, an edge set $E(\gL)$, and maps $i:E(\gL) \rightarrow V(\gL)$ and $t:E(\gL) \rightarrow V(\gL)$. For each edge $e \in E(\Lambda)$, we refer to $i(e)$ as the {\it initial vertex} of $e$ and $t(e)$ as the {\it terminal vertex} of $e$. If $S \subset V(\gL)$, the subgraph of $\gL$ {\it induced by} the vertices in $S$ is the subgraph whose vertex set is $S$ and whose edge set consists of all edges in $E(\gL)$ that have both endpoints in $S$.

\subsection{JSJ decomposition and the class of groups considered} \label{sec:jsj}

   \begin{defn} \label{def:carry}
   A \emph{graph of groups} $\mathcal{G}$ is a graph $\gL = (V(\gL), E(\gL))$ with a \emph{vertex group} $G_v$ for each $v \in V(\gL)$, an \emph{edge group} $G_e$ for each $e \in E(\gL)$, and \emph{edge maps}, which are injective homomorphisms $\Theta^{\pm}_e: G_e \rightarrow G_{\pm e}$ for each $e =(-e,+e) \in E(\gL)$. The graph $\gL$ is called the {\it underlying graph} of $\cG$. 
   
   A \emph{graph of spaces} associated to a graph of groups $\mathcal{G}$ is a space $X$ with a graph $\gL$ constructed from a pointed \emph{vertex space} $(X_v, x_v)$ for each $v \in V(\gL)$ with $\pi_1(X_v,x_v) = G_v$, a pointed {\it edge space} $(X_e, x_e)$ for each $e \in E(\gL)$ such that $\pi_1(X_e,x_e) = G_e$, and maps $\theta^{\pm}_e: (X_e,x_e) \rightarrow (X_{\pm e}, x_{\pm{e}})$ such that $(\theta^{\pm}_e)_* = \Theta^{\pm}_e$.
   The space $X$ is the union $$\left(\bigsqcup_{v \in V(\gL)} X_v \bigsqcup_{e \in E(\gL)} \left( X_e \times [-1,1]\right) \right) \; \Big/ \; \left\{(x,\pm 1) \sim \theta_e^{\pm}(x) \mid (x, \pm 1) \in X_e\times [-1,1] \right\}. $$
   The \emph{fundamental group} of the graph of groups $\mathcal{G}$ is $\pi_1 (X)$.
   A group $G$ \emph{splits as graph of groups} if $G$ is the fundamental group of a non-trivial graph of groups.
  \end{defn}

  \begin{defn} \label{def:fuch} 
   A {\it Fuchsian group} is a non-elementary finitely generated group which acts properly discontinuously on the hyperbolic plane $\Hy^2$. 
   \end{defn}

  \begin{remark} \label{rem:fuch_orb}
   The action of a Fuchsian group $G$ on the hyperbolic plane need not be faithful, but, the kernel of the action is finite. This kernel is the unique maximal finite normal subgroup of $G$. Thus, the quotient $X = \Hy^2/G$ is a canonically defined ({\it ineffective}) orbifold. In particular, $X$ admits a decomposition as a finite cell complex so that each cell $\sigma$ of $X$ is equipped with a finite isotropy group $K_{\sigma}$ which is isomorphic to the stabilizer of each lift of $\sigma$ in $\Hy^2$. 
  \end{remark}
  
  \begin{defn} 
   Let $G$ be a Fuchsian group so that $G \cong \pi_1^{orb}(X)$, where $X = \Hy^2 /G$ is an (ineffective) orbifold. Realize $X$ with a cell decomposition so that each cell $\sigma$ of $X$ has a well-defined isotropy subgroup $K_{\sigma}$. The {\it Euler characteristic of $G$}, denoted $\chi(G)$, is
   \[ \chi(G) = \chi(X) = \sum_{\sigma \text{ cell in X}} (-1)^{\text{dim} (\sigma)} \frac{1}{|K_{\sigma}|}. \]
  \end{defn}
  
  \begin{lemma} \label{lemma:chi_mult}
   Let $G$ and $H$ be Fuchsian groups. If $H$ is an index-$d$ subgroup of $G$, then $d\cdot\chi(G) = \chi(H)$.
  \end{lemma}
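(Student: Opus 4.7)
The plan is to realize $H \leq G$ geometrically as a degree-$d$ orbifold covering and then carry out a standard double-coset count. Since $H$ is a finite-index subgroup of $G$, and $G$ acts on $\mathbb{H}^2$ with finite kernel, $H$ also acts on $\mathbb{H}^2$ with finite kernel; set $X = \mathbb{H}^2/G$ and $Y = \mathbb{H}^2/H$. The natural map $Y \to X$ is a degree-$d$ orbifold cover. First I would fix a cell decomposition of $X$ as in Remark~\ref{rem:fuch_orb} so that each cell $\sigma$ has a well-defined isotropy group $K_\sigma$, and then pull this back to a cell decomposition of $Y$ whose cells are the $H$-orbits of lifts to $\mathbb{H}^2$ of the cells of $X$. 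This is compatible with the orbifold cell structures on both sides.

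Next I would analyze, cell by cell, the contribution to $\chi(Y)$ coming from the cells of $Y$ lying over a given cell $\sigma$ of $X$. Choose one lift $\tilde\sigma \subset \mathbb{H}^2$ whose $G$-stabilizer is $K_\sigma$; then the set of all lifts of $\sigma$ is identified with $G/K_\sigma$, and cells of $Y$ over $\sigma$ are in bijection with double cosets $H \backslash G / K_\sigma$. If $g_1, \ldots, g_r$ is a set of double coset representatives and $H_i := H \cap g_i K_\sigma g_i^{-1}$, then the cell $\tau_i$ of $Y$ corresponding to $Hg_iK_\sigma$ has dimension $\dim \sigma$ and isotropy group $H_i$.

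The key identity is the double-coset counting formula: using $|Hg_iK_\sigma| = |H|\,|K_\sigma|/|H_i|$ (which makes sense as indices because $K_\sigma$ is finite), partitioning $G$ into double cosets and dividing by $|H|$ gives
\[
d \;=\; [G:H] \;=\; \sum_{i=1}^{r} \frac{|K_\sigma|}{|H_i|}, \qquad \text{hence} \qquad \sum_{i=1}^{r} \frac{1}{|H_i|} \;=\; \frac{d}{|K_\sigma|}.
\]
Multiplying by $(-1)^{\dim \sigma}$ and summing over all cells $\sigma$ of $X$ yields
\[
\chi(H) \;=\; \chi(Y) \;=\; \sum_{\sigma} (-1)^{\dim \sigma} \sum_{i} \frac{1}{|H_i|} \;=\; d \cdot \sum_{\sigma} \frac{(-1)^{\dim \sigma}}{|K_\sigma|} \;=\; d \cdot \chi(G).
\]

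I do not foresee a serious obstacle here: the only mild subtlety is ensuring that the chosen cell decomposition of $X$ lifts equivariantly to a cell decomposition of $Y$ (which is automatic since the covering $Y \to X$ is obtained from an intermediate action on $\mathbb{H}^2$ and we may subdivide to make the cellulation $G$-invariant upstairs). Everything else reduces to the one-line double-coset identity above.
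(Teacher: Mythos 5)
Your proof is correct, but it follows a genuinely different route from the paper: the paper gives no direct argument at all, instead citing the general theory of Euler characteristics of groups in Brown (Chapter IX.7), where the formula $\chi(G)=\sum_{\sigma}(-1)^{\dim\sigma}/|K_\sigma|$ is identified with the homological Euler characteristic and multiplicativity over finite-index subgroups is a formal consequence of that theory. You instead prove multiplicativity directly at the level of the cell structure: lift the cellulation of $X=\mathbb{H}^2/G$ to an equivariant structure on $\mathbb{H}^2$, identify the cells of $Y=\mathbb{H}^2/H$ lying over a cell $\sigma$ with the double cosets $H\backslash G/K_\sigma$ and their isotropy groups with $H_i=H\cap g_iK_\sigma g_i^{-1}$, and use the count $[G:H]=\sum_i[K_\sigma:K_\sigma\cap g_i^{-1}Hg_i]$, i.e.\ $\sum_i 1/|H_i|=d/|K_\sigma|$, which immediately gives $\chi(Y)=d\cdot\chi(X)$. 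What your approach buys is a self-contained, elementary proof needing only Remark~\ref{rem:fuch_orb}; what the paper's citation buys is precisely the point your argument leaves implicit, namely that $\chi(H)$, defined as a sum over cells, is independent of the chosen equivariant cell decomposition of $Y$ (your computation uses the pulled-back decomposition, so without that independence you have only computed one particular cell sum). Two small cosmetic points: the identity $|Hg_iK_\sigma|=|H|\,|K_\sigma|/|H_i|$ is stated loosely since $H$ is infinite, but the intended count of left $H$-cosets inside each double coset is the correct standard one; and the assertion that $Y\to X$ is a degree-$d$ orbifold cover is never actually used, since the computation only needs the algebraic index $d=[G:H]$.
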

    
  For background on orbifolds, see Kapovich~\cite{kapovich} and Ratcliffe~\cite{ratcliffe}. The proof of Lemma \ref{lemma:chi_mult} is also given by the more general theory presented by Brown \cite[Chapter IX-7]{brown}.
  
  \begin{defn}
   A {\it bounded Fuchsian group} is a Fuchsian group that is convex cocompact but not cocompact. The convex core of the quotient is a compact orbifold with non-empty boundary consisting of a disjoint union of compact $1$-orbifolds. The {\it peripheral subgroups} are the maximal two-ended subgroups which project to the fundamental groups of the boundary $1$-orbifolds. 
   A {\it hanging Fuchsian} subgroup $H$ is a virtually-free quasiconvex subgroup together with a collection of {\it peripheral} two-ended subgroups, which arise from an isomorphism of $H$ with a bounded Fuchsian group. A {\it full quasiconvex subgroup} of a group $G$ is a subgroup that is not a finite-index subgroup of any strictly larger subgroup of $G$.
  \end{defn}
  
  \begin{thm}\cite[Thm 0.1]{bowditch}
    Let $G$ be a one-ended hyperbolic group that is not Fuchsian.
    There is a canonical \emph{JSJ decomposition} of $G$ as the fundamental group of a graph of groups such that each edge group is 2-ended and each vertex group is either (1)  $2$-ended; (2) maximal hanging Fuchsian; or, (3) a maximal quasi-convex subgroup not of type (2). These types are mutually exclusive, and no two vertices of the same type are adjacent. Every vertex group is a full quasi-convex subgroup. Moreover, the edge groups that connect to any given vertex group of type (2) are precisely the peripheral subgroups of that group.      
  \end{thm}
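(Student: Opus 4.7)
The plan is to reconstruct the JSJ decomposition from the Gromov boundary $M = \pb G$ following the strategy of Bowditch. Since $G$ is one-ended hyperbolic, $M$ is a connected, locally connected, perfect metrizable compactum on which $G$ acts as a uniform convergence group. The key is that splittings of $G$ over two-ended subgroups are detected by certain pairs of points in $M$: the limit set of a two-ended subgroup is a pair $\{a,b\}$ which, if the subgroup is an edge stabilizer of a splitting, must separate $M$. Thus my first step would be to develop the theory of local cut points in $M$ and to organize them into equivalence classes (``necklaces'' in Bowditch's language, or cyclic-element-like pieces in the sense of Whyburn): two local cut points $x,y$ are equivalent if there is a finite chain of local cut points between them each consecutive pair of which jointly separates $M$.

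Given this combinatorial structure on $M$, I would construct a simplicial tree $T$ with two types of vertices. The first type corresponds to the equivalence classes of local cut points (the ``necklaces'' — these will give the type (2) maximal hanging Fuchsian vertices or, in degenerate cases, type (1) two-ended vertices); the second type corresponds to maximal subcontinua of $M$ that are not separated by a single local cut point (these give type (3) quasi-convex rigid vertices, or type (1) two-ended vertices in degenerate cases). Edges of $T$ connect a necklace to a piece when the piece contains a local cut point of the necklace. The action of $G$ on $M$ is topologically canonical, so the induced action of $G$ on $T$ is cocompact with finite edge stabilizers of the appropriate type. Taking the quotient graph of groups $\gL = T/G$ produces the desired decomposition. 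Edge groups are stabilizers of separating pairs of local cut points, hence two-ended. The stabilizer of a necklace is a convergence group action on a circle-with-cusps pattern, which by the convergence group theorem (Tukia, Gabai, Casson--Jungreis) forces the stabilizer to be virtually Fuchsian, giving the hanging Fuchsian structure and identifying peripheral subgroups with the edge stabilizers meeting that vertex. Full quasi-convexity of each vertex stabilizer follows from the fact that the corresponding piece of $M$ is a full limit set, and maximality follows from the canonical nature of the topological construction. Alternation of vertex types along edges is immediate from the bipartite structure of $T$: necklace vertices are adjacent only to piece vertices and vice versa.

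The main obstacle will be two-fold. First, one must show that the equivalence relation on local cut points is well-behaved: the necklaces are finite-type objects, the resulting structure tree $T$ is actually a tree (not merely a bipartite graph) and the $G$-action on $T$ is cocompact — this requires delicate topology of Peano continua and the uniform convergence action, and is where Bowditch's original argument invests the most work. Second, identifying necklaces with bounded Fuchsian groups via the convergence group theorem requires checking that the induced action of a necklace stabilizer on its circle of endpoints really is a convergence action and that the peripheral two-ended subgroups coincide with the edge groups emanating from the corresponding vertex; both rely on a careful analysis of how the pieces of $M$ attach to the necklace. Once these topological facts are in place, the graph-of-groups statement and the mutual exclusivity clauses are essentially formal consequences.
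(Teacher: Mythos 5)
This statement is not proved in the paper at all: it is quoted verbatim from Bowditch's work (cited as \cite[Thm 0.1]{bowditch}), so there is no internal argument to compare against. Your proposal is, in outline, a faithful description of Bowditch's strategy --- local cut points in the boundary $\pb G$, the equivalence classes (necklaces) and the complementary pieces, the induced bipartite structure tree, and the convergence group theorem of Tukia, Gabai and Casson--Jungreis to identify necklace stabilizers as virtually Fuchsian. But as a proof it has a genuine gap: every step that actually carries the weight of the theorem is named and then deferred. You yourself flag that the well-definedness and finiteness of the necklace structure, the fact that the construction yields a simplicial tree with a cocompact $G$-action, and the identification of peripheral subgroups with the incident edge groups ``require delicate topology'' --- that delicate topology \emph{is} the proof, occupying most of Bowditch's paper, and nothing in your sketch substitutes for it. Similarly, ``full quasi-convexity follows from the fact that the corresponding piece of $M$ is a full limit set'' and ``maximality follows from the canonical nature of the construction'' are restatements of the conclusions, not arguments: fullness requires showing the stabilizer is not of finite index in a larger subgroup, and maximality of the hanging Fuchsian vertex groups needs the analysis of how arbitrary hanging Fuchsian subgroups sit relative to the necklace structure.

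There is also a concrete error: you assert the action of $G$ on $T$ is ``cocompact with finite edge stabilizers,'' which contradicts the statement being proved --- edge stabilizers are the stabilizers of separating pairs of local cut points and are two-ended, not finite (presumably you meant finitely many orbits of edges, but as written it is false). Finally, the mutual exclusivity and alternation clauses are not ``immediate from the bipartite structure'': type (1) vertices arise both from degenerate necklaces (jumps) and must be shown not to be adjacent to one another, which again requires the structure theory rather than formal bookkeeping. In short, your outline correctly identifies the architecture of Bowditch's argument, but it is a road map, not a proof; for the purposes of this paper the correct move is simply to cite \cite{bowditch}, as the author does.
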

    
  \begin{defn}
   Let $G$ be a one-ended hyperbolic group that is not Fuchsian.
   The {\it JSJ tree} of $G$ is the Bass--Serre tree of the JSJ decomposition of~$G$. The {\it JSJ graph} of $G$ is the underlying graph of the JSJ decomposition of $G$. 
  \end{defn}
    
  \begin{defn} (Class of groups considered.)
   Let $\cC$ denote the class of one-ended hyperbolic groups which are not Fuchsian and for which the JSJ decomposition has no vertex groups of type (3). 
  \end{defn}
  
  \begin{remark}
   If $G \in \cC$, then the JSJ graph of $G$ is bipartite. 
  \end{remark}

\subsection{Hyperbolic $P$-manifolds}

  We make use of the following subclass of groups in $\cC$. An example of a space defined below is given in Figure~\ref{figure:not_AC_tree}.

  \begin{defn} \label{def:geom_amal}
   A {\it $2$-dimensional hyperbolic $P$-manifold} $X$ is a space with a graph of spaces decomposition over a finite oriented graph $\gL$ with the following properties.
   \begin{enumerate}
    \item The underlying graph $\gL$ is bipartite with vertex set $V(\Lambda) = V_1 \sqcup V_2$ and edge set $E(\gL)$ such that each edge $e \in E(\gL)$ has $i(e) \in V_1$ and $t(e) \in V_2$. 
    \item For each $u \in V_1$, the vertex space $X_u$ is a copy of the circle $S^1$, and $x_u$ is a point on $X_u$. For each $v \in V_2$, the vertex space $X_v$ is a connected surface with negative Euler characteristic and non-empty boundary, and $x_v$ is a point on $X_v$.
    \item For each edge $e \in E(\gL)$, the edge space $X_e$ is a copy of the circle $S^1$, and $x_e$ is a point on $X_e$. If $e = (-e,+e)$ with $-e = i(e)$ and $+e = t(e)$, the map $\theta_e^-:(X_e,x_e) \rightarrow (X_{i(e)}, x_{i(e)})$ is a homeomorphism, and the map $\theta_e^+:(X_e,x_e) \rightarrow (X_{t(e)}, x_{t(e)})$ is a homeomorphism onto a boundary component of $X_{t(e)}$. 
     \item Each vertex $u \in V_1$ has valance at least three. Given any vertex $v \in V_2$, for each boundary component $B$ of $X_v$, there exists an edge $e$ with $t(e) = v$, such that the associated edge map identifies $X_e$ with $B$. The valance of $v$ is the number of boundary components of $X_v$. 
    \end{enumerate}
  The fundamental group of a $2$-dimensional hyperbolic $P$-manifold is a \emph{geometric amalgam of free groups} and is a group in the class $\cC$. 
  If $X$ is a $2$-dimensional hyperbolic $P$-manifold, a {\it connected subsurface in $X$} is the union of a surface $X_v$ with $v \in V_2$ together with $\{ X_e \times [-1,1] \, | \, \text{$e$ is incident to $v$}\}$, a set of annuli. (This subsurface is homeomorphic to $X_v$ and its boundary components are {\it branching curves} on $X$.) A {\it subsurface in $X$} is a nonempty finite union of connected subsurfaces in $X$.
  \end{defn}

  \begin{remark}
   If $X = X(\Lambda)$ is a $2$-dimensional hyperbolic $P$-manifold, then the JSJ graph of the geometric amalgam of free groups $\pi_1(X)$ is the (unoriented) bipartite graph $\gL$. For details, see \cite[Section 4.1]{malone}.
  \end{remark}

  \begin{remark}
    ``$P$-manifold'' is short for ``piecewise-manifold.''
   Lafont \cite{lafont} refers to $2$-dimensional hyperbolic $P$-manifolds as {\it simple, thick, $2$-dimensional hyperbolic $P$-manifolds}; we omit the extra adjectives for ease of exposition. In \cite{danistarkthomas}, these spaces are referred to as {\it surface amalgams}.
  \end{remark}

\section{Degree refinement and related graphs} \label{sec:deg_ref}

  Angluin \cite[Section 6]{angluin} proved two finite graphs have isomorphic universal covers if and only if the graphs have equivalent degree refinements, which is a matrix defined below; see also Leighton~\cite{leighton}. Malone \cite{malone} extended this work by defining the degree refinement for a group $G \in\cC$ and proving this matrix encodes the isomorphism type of the JSJ tree of $G$. Examples of the definitions given in this section appear in Figure~\ref{figure:ex_3}.

            \begin{figure}
      \begin{overpic}[scale=.8, tics=5]{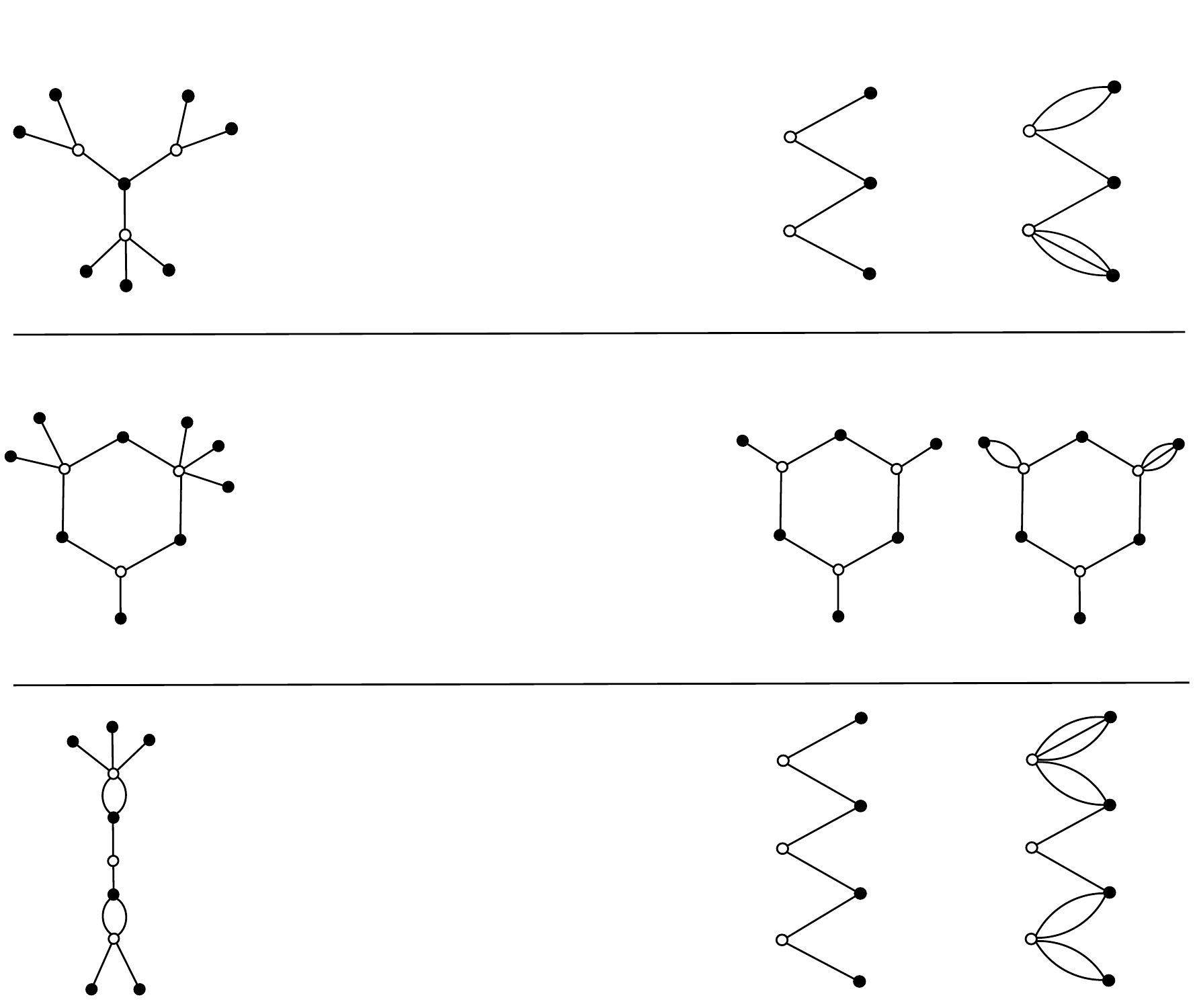}
      \put(2,83.5){JSJ decomposition}
      \put(7,80.5){graph:}
      \put(32,82.5){Degree refinement:}
      \put(65,83.5){Graph of}
      \put(66,80.5){blocks:}
      \put(84.5,83.5){Augmented}
      \put(83,80.5){graph of blocks:}
      \put(31,68){\Small{$\left(\begin{array}{cc|ccc}
		      0 & 0 & 1 & 2 & 0 \\
		      0 & 0 & 1 & 0 & 3\\
		      \hline
		      \infty & \infty & 0 & 0 & 0 \\
		      \infty & 0 & 0 & 0 & 0 \\
		      0 & \infty & 0 & 0 & 0 \\
                   \end{array}\right)$}}
        \put(0,75){\Small{$y_{21}$}}
        \put(4,78){\Small{$y_{22}$}}
        \put(14,78){\Small{$y_{23}$}}
        \put(19,75){\Small{$y_{24}$}}
        \put(10,71){\Small{$y_{1}$}}
        \put(4,60){\Small{$y_{31}$}}
        \put(9,58.5){\Small{$y_{32}$}}
        \put(15,60){\Small{$y_{33}$}}
        \put(4,70){\Small{$x_{11}$}}
        \put(15,70){\Small{$x_{12}$}}
        \put(11.5,65){\Small{$x_{2}$}}
        \put(34,76){\Small{$T_1$}}
        \put(38,76){\Small{$T_2$}}
        \put(42,76){\Small{$F_1$}}
        \put(45.5,76){\Small{$F_2$}}
        \put(49,76){\Small{$F_3$}}
        \put(29,73){\Small{$T_1$}}
        \put(29,70.5){\Small{$T_2$}}
        \put(29,67.5){\Small{$F_1$}}
        \put(29,65){\Small{$F_2$}}
        \put(29,62.5){\Small{$F_3$}}
        \put(63,72.5){\Small{$t_2$}}
        \put(63,64.5){\Small{$t_1$}}
        \put(74,76){\Small{$f_2$}}
        \put(74,68.5){\Small{$f_1$}}
        \put(74,61){\Small{$f_3$}}
        \put(83.5,72.5){\Small{$t_2$}}
        \put(83.5,64.5){\Small{$t_1$}}
        \put(94.5,76){\Small{$f_2$}}
        \put(94.5,68.5){\Small{$f_1$}}
        \put(94.5,61){\Small{$f_3$}}
        \put(24,40){\footnotesize{$\left(\begin{array}{ccc|cccccc}
		      0 & 0 & 0 & 1 & 0 & 0 & 1 & 0 & 1 \\
		      0 & 0 & 0 & 0 & 2 & 0 & 1 & 1 & 0 \\
		      0 & 0 & 0 & 0 & 0 & 3 & 0 & 1 & 1 \\
		      \hline
		      \infty & 0 & 0 & 0 & 0 & 0 & 0 & 0 & 0 \\
		      0 & \infty & 0 & 0 & 0 & 0 & 0 & 0 & 0 \\
		      0 & 0 & \infty & 0 & 0 & 0 & 0 & 0 & 0 \\
		      \infty & \infty & 0 & 0 & 0 & 0 & 0 & 0 & 0 \\
		      0 & \infty & \infty & 0 & 0 & 0 & 0 & 0 & 0 \\
		      \infty & 0 & \infty & 0 & 0 & 0 & 0 & 0 & 0 \\
                   \end{array}\right)$}} 
        \put(-1,47.5){\Small{$y_{21}$}}
        \put(2,51){\Small{$y_{22}$}}
        \put(14,50.5){\Small{$y_{31}$}}
        \put(18,48.5){\Small{$y_{32}$}}
        \put(18,41.5){\Small{$y_{33}$}}
        \put(9,30.5){\Small{$y_{1}$}}
        \put(2,39){\Small{$y_{4}$}}
        \put(9,49){\Small{$y_{5}$}}
        \put(16,39){\Small{$y_{6}$}}
        \put(9,38){\Small{$x_{1}$}}
        \put(6.5,44){\Small{$x_{2}$}}
        \put(12,44){\Small{$x_{3}$}}
        \put(27,53){\Small{$T_1$}}
        \put(31.5,53){\Small{$T_2$}}
        \put(35.8,53){\Small{$T_3$}}
        \put(40,53){\Small{$F_1$}}
        \put(43.5,53){\Small{$F_2$}}
        \put(47,53){\Small{$F_3$}}
        \put(50.5,53){\Small{$F_4$}}
        \put(53.8,53){\Small{$F_5$}}
        \put(57,53){\Small{$F_6$}}
        \put(22,51){\Small{$T_1$}}
        \put(22,48){\Small{$T_2$}}
        \put(22,45.5){\Small{$T_3$}}
        \put(22,42.5){\Small{$F_1$}}
        \put(22,40){\Small{$F_2$}}
        \put(22,37.5){\Small{$F_3$}}
        \put(22,34.7){\Small{$F_4$}}
        \put(22,32){\Small{$F_5$}}
        \put(22,29.5){\Small{$F_6$}}
        \put(69,38){\Small{$t_1$}}
        \put(66,44){\Small{$t_2$}}
        \put(73,44){\Small{$t_3$}}
        \put(69,30){\Small{$f_1$}}
        \put(62,49){\Small{$f_2$}}
        \put(76,49){\Small{$f_3$}}
        \put(63,37){\Small{$f_4$}}
        \put(69,49.5){\Small{$f_5$}}
        \put(75,37){\Small{$f_6$}}
        \put(89,38){\Small{$t_1$}}
        \put(86,43){\Small{$t_2$}}
        \put(93,43){\Small{$t_3$}}
        \put(89,30){\Small{$f_1$}}
        \put(82,49){\Small{$f_2$}}
        \put(97.5,49){\Small{$f_3$}}
        \put(83.5,37){\Small{$f_4$}}
        \put(89.5,49.5){\Small{$f_5$}}
        \put(95,37){\Small{$f_6$}}
        \put(28,11){\Small{$\left(\begin{array}{ccc|cccc}
		      0 & 0 & 0 & 2 & 0 & 2 & 0 \\
		      0 & 0 & 0 & 0 & 3 & 0 & 2 \\
		      0 & 0 & 0 & 0 & 0 & 1 & 1 \\
		      \hline
		      \infty & 0 & 0 & 0 & 0 & 0 & 0 \\
		      0 & \infty & 0 & 0 & 0 & 0 & 0 \\
		      \infty & 0 & \infty & 0 & 0 & 0 & 0 \\
		      0 & \infty & \infty & 0 & 0 & 0 & 0 \\
                   \end{array}\right)$}}
        \put(4,24){\Small{$y_{21}$}}
        \put(8,25){\Small{$y_{22}$}}
        \put(12,24){\Small{$y_{23}$}}
        \put(4,1.5){\Small{$y_{11}$}}
        \put(13,1.5){\Small{$y_{12}$}}
        \put(6.5,9){\Small{$y_{3}$}}
        \put(6.5,15.5){\Small{$y_{4}$}}
        \put(11,5){\Small{$x_{1}$}}
        \put(11,19){\Small{$x_{2}$}}
        \put(11,12){\Small{$x_{3}$}}
        \put(31.2,21.5){\Small{$T_1$}}
        \put(35.7,21.5){\Small{$T_2$}}
        \put(40,21.5){\Small{$T_3$}}
        \put(44,21.5){\Small{$F_1$}}
        \put(47.5,21.5){\Small{$F_2$}}
        \put(51,21.5){\Small{$F_3$}}
        \put(54.5,21.5){\Small{$F_4$}}
        \put(26,19){\Small{$T_1$}}
        \put(26,16.5){\Small{$T_2$}}
        \put(26,13.5){\Small{$T_3$}}
        \put(26,11){\Small{$F_1$}}
        \put(26,8.3){\Small{$F_2$}}
        \put(26,5.7){\Small{$F_3$}}
        \put(26,3){\Small{$F_4$}}
        \put(62,5){\Small{$t_1$}}
        \put(62,20){\Small{$t_2$}}
        \put(62,13){\Small{$t_3$}}
        \put(73.5,2){\Small{$f_1$}}
        \put(73.5,24){\Small{$f_2$}}
        \put(73.5,16.5){\Small{$f_4$}}
        \put(73.5,9){\Small{$f_3$}}
        \put(83,5){\Small{$t_1$}}
        \put(83,20){\Small{$t_2$}}
        \put(83,13){\Small{$t_3$}}
        \put(94,2){\Small{$f_1$}}
        \put(94,24){\Small{$f_2$}}
        \put(94,16.5){\Small{$f_4$}}
        \put(94,9){\Small{$f_3$}}
      \end{overpic}
	\caption{{\small Three examples. White vertices correspond to $2$-ended vertex groups, and black vertices correspond to maximal hanging Fuchsian vertex groups. Vertices $x_i, x_{ij}$ are in degree partition block $T_i$, and vertices $y_i, y_{ij}$ are in degree partition block $F_i$. Any group in $\cC$ with a JSJ decomposition graph of the lower two types is not quasi-isometric to any group generated by finite-order elements.  }}
      \label{figure:ex_3}
     \end{figure}
    
  \begin{defn} \label{def:deg_ref} 
    The {\it degree partition} of a graph $\Lambda$ is a partition of the vertices of $\Lambda$ into the minimum number of blocks $M_1, \ldots, M_n$ such that there exist constants $m_{ij}$ such that for each $i,j$ with $1 \leq i\leq n$, $1 \leq j \leq n$, each vertex in $M_i$ is connected via $m_{ij}$ edges to $M_j$. The {\it degree refinement} of $\gL$ is the $n \times n$ matrix $M = (m_{ij})$.  
  \end{defn}
    
  \begin{defn} \label{def:deg_ref_equiv}
   Two degree refinements $M$ and $M'$ are {\it equivalent} if they have the same size and there exists a permutation matrix $P$ so that $M' = PMP^T$. 
  \end{defn}
  
  The permutation matrix in the above definition accounts for possibly relabeling the blocks of the degree partition. There is a correspondence between isomorphism types of trees and equivalence classes of matrices  given as follows. 
  
  \begin{thm} \cite[Section 2]{leighton} \cite[Theorem 2.32]{malone}
   To each matrix $M$ with entries in $\N \cup \{\infty\}$ there is a unique tree $T$ up to graph isomorphism such that $M$ is the degree refinement of $T$. Conversely, to each tree $T$ there is a unique matrix $M$ with entries in $\N \cup \{\infty\}$ up to the equivalence defined in Definition~\ref{def:deg_ref_equiv} so that $M$ is the degree refinement of $T$. 
  \end{thm}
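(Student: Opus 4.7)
The statement breaks naturally into two directions, and I would prove each separately. The converse direction (each tree determines its degree refinement matrix up to the equivalence of Definition~\ref{def:deg_ref_equiv}) is essentially immediate from the definition. Given a tree $T$, the degree partition is the \emph{coarsest equitable partition}, obtained by starting from the trivial partition into one block and iteratively refining, splitting each block according to the vector of neighbor counts into each current block, until the partition is stable. This process depends only on $T$ and terminates because each step either strictly refines or leaves the partition unchanged. The resulting blocks are canonical, and the associated matrix $(m_{ij})$ is then determined up to a relabeling of the blocks, which is exactly captured by conjugation by a permutation matrix.

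For the forward direction, given a matrix $M = (m_{ij})$ of size $n \times n$ that is the degree refinement of some tree, I would first construct an explicit labeled tree $T_M$ realizing $M$. Pick an index $i_0 \in \{1,\dots,n\}$ and create a root $v_0$ of label $i_0$; attach $m_{i_0,j}$ fresh children of label $j$ for each $j$. Inductively, for a vertex $w$ of label $k$ whose already-constructed parent has label $\ell$, attach $m_{kj}$ fresh children of label $j$ for each $j\neq \ell$ and $m_{k\ell}-1$ additional fresh children of label $\ell$ (using the convention $\infty-1=\infty$). Because every step creates only new vertices, $T_M$ is a tree, and by construction every vertex of label $i$ has exactly $m_{ij}$ neighbors of label $j$. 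Since $M$ is assumed to be in reduced form (no two labels share identical row and column data), the labeling is precisely the degree partition, so $T_M$ realizes $M$ as its degree refinement.

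For uniqueness, suppose $T$ and $T'$ are trees with degree refinement matrices equivalent to $M$. After relabeling blocks according to the permutation matrix witnessing the equivalence, choose base vertices $v\in T$ and $v'\in T'$ lying in corresponding blocks, and build $\phi\colon T\to T'$ by breadth-first extension: whenever a vertex $w$ has been mapped to $\phi(w)$ in the same block, both have equal numbers of neighbors in each block, so we extend $\phi$ by any label-preserving bijection between the not-yet-mapped neighbors of $w$ and of $\phi(w)$. Because $T$ and $T'$ are trees, each vertex is reached by a unique simple path from the base vertex, so no consistency condition between different extensions can arise, and $\phi$ defines a graph isomorphism. The main subtlety to keep track of is confirming that the label partition used in building $T_M$ really equals the degree partition of $T_M$, which requires $M$ to be reduced; the use of treeness in the uniqueness step replaces the much more delicate common-covering-tree argument of Leighton that would be required for general graphs with equivalent degree refinements.
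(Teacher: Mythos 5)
The paper does not prove this statement itself; it cites Leighton and Malone, so there is no internal proof to compare against. Your overall architecture — canonicity of the coarsest equitable partition for the converse direction, an explicit rooted construction of $T_M$ for existence, and a breadth-first, label-preserving back-and-forth for uniqueness (which is where treeness genuinely simplifies Leighton's common-cover argument) — is the standard and correct route, and the uniqueness step is fine as written (modulo the harmless convention that $\infty$ means countably infinite valence, so that the required bijections between neighbor sets exist).

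There is, however, a genuine gap in the existence step: the inference ``$M$ is reduced (no two labels share identical row and column data) $\Rightarrow$ the label partition of $T_M$ is the degree partition'' is false. Take $M=\left(\begin{smallmatrix}0&2\\2&0\end{smallmatrix}\right)$: its rows and columns are pairwise distinct, and your construction produces the bi-infinite path with alternating labels, but the degree partition of that tree is the single block with degree refinement $(2)$, not $M$; a bipartite example with $\infty$ entries in the paper's format is just as easy to build (e.g.\ $T_1$ meeting $F_1,F_2$ once each, $T_2$ meeting $F_1$ twice, $F_1$ adjacent to both $T_i$, $F_2$ only to $T_1$ — the two-block coarsening is equitable, so the constructed tree has degree refinement $\left(\begin{smallmatrix}0&2\\\infty&0\end{smallmatrix}\right)$). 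Distinctness of rows only rules out merging two blocks at a time; it does not rule out a coarser equitable partition obtained by merging blocks on both sides simultaneously, which is exactly what happens above. The statement is saved by the standing hypothesis you already invoke — that $M$ actually \emph{is} the degree refinement of some tree $T_0$ — but then the conclusion should be derived from that hypothesis, not from reducedness: run your own breadth-first rigidity argument between $T_M$ (with its label partition, equitable with matrix $M$) and $T_0$ (with its degree partition, equitable with matrix $M$) to get a block-preserving isomorphism $T_M\cong T_0$; since the degree partition is canonical under isomorphism, the label partition of $T_M$ must coincide with its degree partition, and hence $T_M$ realizes $M$. Alternatively, state explicitly that the admissible matrices are those admitting no coarser equitable quotient (which is what being a degree refinement means), since that, and not row-distinctness, is the condition your argument needs.
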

  
  \begin{defn} \label{def:deg_ref_G}
    If $G \in \cC$, then the {\it degree refinement of $G$} is the degree refinement of the JSJ tree of~$G$. Alternatively, the degree refinement of $G$ can be constructed from the JSJ graph for $G$ as follows, which was shown by Malone \cite[Section 2.5]{malone}, extending \cite[Section 2]{leighton}. 
  
    Suppose $G$ has JSJ graph $\Lambda$ with vertex set $V_1 \sqcup V_2$ where each vertex group $G_u$ for $u \in V_1$ is two-ended, and each vertex group $G_v$ for $v \in V_2$ is maximal hanging Fuchsian. 
    
    Let $r \in V(\Lambda)$. The {\it augmented valance of $r$} is the valance of any lift of $r$ in the JSJ tree. More specifically, suppose $s \in V(\Lambda)$. 
    Let $\iota(r,s) = \infty$ if $r \in V_2$ and $r$ is adjacent to $s$. Let $\iota(r,s) = k$ if $r \in V_1$ and $r$ is adjacent to $s$ via $k$ edges. Let $\iota(r,s) = 0$ otherwise. The augmented valance of the vertex $r$ is equal to $\displaystyle \sum_{s \in V(\gL)} \iota(r,s)$.
    
    Perform the following steps to compute the degree refinement. (Step 1) Partition the vertices of $V(\gL)$ into blocks according to their augmented valance. (Step 2) Refine the partition so that two vertices $r, r'$ remain in the same block $M_i$ if and only if for all $j \neq i$,  $\displaystyle \sum_{s\in M_j} \iota(r, s) = \sum_{s \in M_j} \iota(r',s)$. (Step~3) Repeat Step 2 recursively until no further partitioning is possible to obtain the {\it degree partition of $\gL$.} The degree refinement is the matrix $M = (m_{ij})$ where $m_{ij} = \iota(r_i,r_j)$ for $r_i \in M_i$ and $r_j \in M_j$. The process is finite since $V(\gL)$ is finite. 
    \end{defn}
    
   Malone~\cite{malone} proved the following theorem for geometric amalgams of free groups using techniques of Behrstock--Neumann~\cite{behrstockneumann}. Cashen--Martin~\cite{cashenmartin} proved the remaining cases.
    
    \begin{thm} \cite[Theorem 4.14]{malone} \cite[Theorem 4.9]{cashenmartin} \label{thm:qi_same_deg_ref}
    Let $G, G' \in \cC$. The groups $G$ and $G'$ are quasi-isometric if and only if the degree refinement of $G$ is equivalent to the degree refinement of $G'$.
   \end{thm}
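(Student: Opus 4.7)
The plan is to derive the theorem from two independent ingredients: the classification of trees by degree refinement (Theorem 3.5), which supplies a bijection between isomorphism classes of JSJ trees and equivalence classes of degree refinement matrices; and the fact that the isomorphism type of the JSJ tree is itself a complete quasi-isometry invariant within $\cC$. Together these two pieces produce the stated characterization, so the work lies in establishing each implication between quasi-isometry and tree isomorphism.

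For the forward direction, assume $G$ and $G'$ are quasi-isometric. The goal is to show that a quasi-isometry $f \colon G \to G'$ descends to a graph isomorphism between the JSJ trees $T_G$ and $T_{G'}$. Following Behrstock--Neumann, I would identify the edges of the JSJ tree with coarse equivalence classes of maximal flat strips in the Cayley graph, arising from the axes of two-ended edge stabilizers. Such strips are preserved up to bounded Hausdorff distance by any quasi-isometry, because they can be characterized coarsely as bi-infinite quasi-lines whose thickenings coarsely separate the group. The incidence pattern between such strips and the complementary hanging Fuchsian pieces recovers the bipartite adjacency of $T_G$, so $f$ yields an isomorphism $T_G \cong T_{G'}$. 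Since the iterative partition refinement defining the degree refinement depends only on the underlying tree, the degree refinements are then equivalent.

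For the reverse direction, assume the degree refinements of $G$ and $G'$ are equivalent. By Theorem 3.5 we already have $T_G \cong T_{G'}$. I would first reduce to the case of geometric amalgams of free groups: given any matrix realized as the degree refinement of a group in $\cC$, one can build a surface amalgam whose JSJ tree realizes the given matrix, by choosing surfaces of appropriate negative Euler characteristic and gluing along boundary curves according to the bipartite JSJ graph. It then suffices to produce a quasi-isometry between two geometric amalgams with isomorphic JSJ trees. For this I would establish a Leighton-type statement at the level of the bipartite JSJ graphs: two finite bipartite graphs with isomorphic universal covers admit a common finite cover, and this cover lifts to a common finite cover of the graph-of-spaces structure by passing to finite covers of each surface piece that adjust the Euler characteristic compatibly. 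A common finite cover of the spaces then gives commensurable fundamental groups, hence a quasi-isometry.

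The main obstacle is the Leighton-type lift in the reverse direction. The JSJ graph has vertices of infinite valence corresponding to hanging Fuchsian pieces, and the edge maps are tied to specific identifications of edge circles with boundary components of the chosen surfaces, so the combinatorial common cover must be promoted to a cover of graphs of spaces in a way that respects the surface topology. Ensuring the surface Euler characteristics multiply compatibly across the cover, and that the boundary identifications glue consistently, is where the delicate combinatorial and topological bookkeeping lies. The forward direction, by contrast, rests on a now-standard application of the coarse geometry of two-ended subgroups in hyperbolic groups.
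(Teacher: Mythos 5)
This statement is not proven in the paper at all --- it is quoted from Malone and Cashen--Martin --- so the comparison is with their arguments, which the paper summarizes as quasi-isometry constructions in the style of Behrstock--Neumann. Your forward direction (a quasi-isometry preserves the coarse separation pattern of two-ended subgroups, hence induces an isomorphism of JSJ trees, and the degree refinement depends only on the tree) is essentially the standard QI-rigidity of Bowditch's JSJ and is fine in outline, modulo the odd reference to ``flat strips'' in a hyperbolic group.

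The reverse direction, however, has a genuine gap on two counts. First, the ``reduction to geometric amalgams'' is circular: exhibiting \emph{some} surface amalgam realizing the given degree refinement does not show that your original group $G \in \cC$ is quasi-isometric to it --- that is precisely (a case of) the theorem you are trying to prove, and it is the part Cashen--Martin's work addresses (general hanging Fuchsian vertex groups, torsion, virtually-$\Z$ edge groups). Second, and more fundamentally, the proposed Leighton-type strategy --- promote a common finite cover of the bipartite JSJ graphs to a common finite cover of the graphs of spaces, so that the groups are commensurable and hence quasi-isometric --- cannot work, because equivalent degree refinements do \emph{not} imply commensurability. This is exactly the content of Theorem~\ref{thm:sec_1_qi_notAC} and Theorem~\ref{thm:sec1_qi_vs_ac} in this paper: within a single quasi-isometry class (fixed JSJ tree, fixed degree refinement) there are infinitely many abstract commensurability classes, obstructed by the Euler characteristic invariants of Propositions~\ref{prop_block_comm} and~\ref{prop_matching_comm}. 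Concretely, replacing one surface piece by a higher-genus surface with the same number of boundary components preserves the JSJ graph and degree refinement but destroys commensurability, so no common finite cover exists and the ``adjust the Euler characteristics compatibly'' step you flag as delicate is in fact impossible in general. The actual proofs build quasi-isometries directly and piecewise --- matching up vertex and edge spaces over an isomorphism of the trees, using that any two surfaces with boundary admit bilipschitz equivalences respecting boundary circles (Behrstock--Neumann's technique, as adapted by Malone, and the line-pattern machinery of Cashen--Martin) --- rather than passing through finite covers.
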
 
    
   Bipartite graphs considered in this paper arise as a JSJ graph of a group in $\cC$, which leads to the following definition of the degree refinement for a bipartite graph. We caution the reader that this is not the same as the usual notion degree refinement for the graph (without a specified bipartite structure). 

  \begin{defn} \label{def:bipartite}
    If $\Lambda$ is a bipartite graph with $V(\gL) = V_1 \sqcup V_2$, define the {\it degree refinement of $\gL$} to be the degree refinement of a group with JSJ graph $\Lambda$ as defined in Definition \ref{def:deg_ref_G}. Similarly, define the {\it degree partition of $\gL$} to be the degree partition of a group with JSJ graph $\Lambda$ as defined in Definition~\ref{def:deg_ref_G}.
    
   Suppose in the degree partition of $\gL$, the vertices in $V_1$ are contained in blocks $T_1, \ldots, T_n$ and the vertices in $V_2$ are in blocks $F_1, \ldots F_m$. (We use ``T'' for $t$wo-ended and ``F'' for hanging $F$uchsian.)
   Let $M$ be the degree refinement defined above. Let $n_{ij}$ be the entry in the degree refinement corresponding to the blocks $T_i$ and $F_j$. The {\it graph of blocks of $M$}, denoted $\G_{\cB}$, has vertex set $\{t_1, \ldots, t_n, f_1, \ldots, f_m\}$ and an edge $\{t_i, f_j\}$ if and only if $n_{ij}>0$. The {\it augmented graph of blocks of $M$}, denoted $\G_0$, has vertex set $\{t_1, \ldots, t_n, f_1, \ldots, f_m\}$ and $n_{ij}$ edges from $t_i$ to $n_j$ for $1 \leq i \leq n$, $1 \leq j \leq m$. 
   
   If $G \in \cC$ with JSJ graph $\Lambda$ as above, then the {\it graph of blocks of $G$} is the graph of blocks of the degree refinement of $G$; the {\it augmented graph of blocks of $G$} is the augmented graph of blocks of the degree refinement of $G$. Similarly, if $\Lambda$ is a bipartite graph, then the {\it graph of blocks of $\Lambda$} is the graph of blocks of any group $G \in \cC$ with JSJ graph $\gL$; the {\it augmented graph of blocks of $\gL$} is the augmented graph of blocks of $G$.
  \end{defn}
  
  \begin{remark}
   The bipartite graph $\G_0$ defined in Definition~\ref{def:bipartite} has degree refinement $M$. 
  \end{remark}

 
  The next corollary follows from Theorem~\ref{thm:qi_same_deg_ref}.
 
   \begin{cor} \label{cor_same_block_gr}
    Let $G, G' \in \cC$, let $\G_{\cB}$ and $\G_{\cB}'$ denote the graphs of blocks for $G$ and $G'$, respectively, and let $\G_0$ and $\G_0'$ denote the augmented graphs of blocks of $G$ and $G'$, respectively. If $G$ and $G'$ are quasi-isometric, then $\G_{\cB} \cong \G_{\cB}'$ and $\G_0 \cong \G_0'$. 
   \end{cor}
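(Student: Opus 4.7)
The plan is to deduce this directly from Theorem~\ref{thm:qi_same_deg_ref}, which tells us that quasi-isometric groups in $\cC$ have equivalent degree refinements, together with the observation that the graph of blocks and augmented graph of blocks are constructed purely from the degree refinement (Definition~\ref{def:bipartite}). So the main content is to show that an equivalence of degree refinements $M' = PMP^T$ induces the desired graph isomorphisms.

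First I would fix notation. Let $M$ be the degree refinement of $G$, partitioned as an $(n+m)\times(n+m)$ matrix with the first $n$ indices corresponding to the Type I blocks $T_1,\dots,T_n$ and the last $m$ indices corresponding to the Type II blocks $F_1,\dots,F_m$. By construction (Definition~\ref{def:deg_ref_G}), the upper-left $n\times n$ block and lower-right $m\times m$ block of $M$ are zero, the upper-right entries are in $\N$, and the lower-left entries are in $\{0,\infty\}$. In particular, every row indexed by a Type II block has entries summing to $\infty$, while every row indexed by a Type I block has finite sum. Since $G$ is quasi-isometric to $G'$, Theorem~\ref{thm:qi_same_deg_ref} gives a permutation matrix $P$ with $M' = PMP^T$.

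Next I would verify that the permutation $P$ must preserve the Type I / Type II partition of indices. Since conjugation by $P$ permutes rows (and correspondingly columns), and the rows of $M'$ must have the same pattern of infinite versus finite sums as those of $M$, the permutation cannot send a Type I index to a Type II index. Hence $P = P_1 \oplus P_2$ with $P_1$ an $n\times n$ permutation and $P_2$ an $m\times m$ permutation. This produces bijections $\sigma \colon \{T_1,\dots,T_n\} \to \{T_1',\dots,T_n'\}$ and $\tau \colon \{F_1,\dots,F_m\} \to \{F_1',\dots,F_m'\}$ such that the $(T_i,F_j)$-entry of $M$ equals the $(\sigma(T_i),\tau(F_j))$-entry of $M'$.

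Finally, I would use these bijections to build the two graph isomorphisms. Define $\phi \colon V(\G_\cB) \to V(\G_\cB')$ by $t_i \mapsto \sigma(t_i)$ and $f_j \mapsto \tau(f_j)$; by Definition~\ref{def:bipartite}, $\{t_i,f_j\}$ is an edge of $\G_\cB$ iff $n_{ij}>0$ iff $n'_{\sigma(i)\tau(j)}>0$ iff $\{\phi(t_i),\phi(f_j)\}$ is an edge of $\G_\cB'$, so $\phi$ is a graph isomorphism. The identical vertex map (applied to $\G_0$) is an isomorphism of augmented graphs of blocks because the number of edges from $t_i$ to $f_j$ in $\G_0$ is exactly $n_{ij}$, which equals the number of edges from $\sigma(t_i)$ to $\tau(f_j)$ in $\G_0'$. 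There is no serious obstacle here; the only place one must be slightly careful is confirming that $P$ respects the Type I / Type II split, which follows immediately from the presence of $\infty$ entries in Type II rows and their absence in Type I rows.
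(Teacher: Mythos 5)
Your proposal is correct and follows the same route the paper intends: the corollary is stated as an immediate consequence of Theorem~\ref{thm:qi_same_deg_ref}, since both $\G_{\cB}$ and $\G_0$ are built purely from the degree refinement. Your only added content is the routine verification that the permutation realizing the equivalence $M'=PMP^T$ preserves the Type I/Type II split (via finite versus infinite row entries), which is exactly the detail the paper leaves implicit, so there is nothing to correct.
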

 
\section{Characterization up to quasi-isometry} \label{sec:char_qi}

\subsection{Obstructions} \label{subsec:obstructions}

  In this section, we prove that Conditions (1)-(3) of Theorem~\ref{thm_sec1_qi_char} imply Condition~(4). 
  The first lemma generalizes \cite[Example 8.1]{danistarkthomas}.  An example of a group that does not satisfy the hypothesis of Lemma~\ref{lemma:no_cycle} is given in the middle example in Figure \ref{figure:ex_3}; an example of a group which does not satisfy the hypothesis of Lemma~\ref{lemma:bad_double} is given in the bottom example in Figure~\ref{figure:ex_3}.
  
  \begin{lemma} \label{lemma:no_cycle}
   Suppose $G \in \cC$ has JSJ graph a tree $T$. Then the graph of blocks of $G$ is a tree. 
  \end{lemma}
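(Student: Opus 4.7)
The plan is a proof by contradiction: assume $\G_{\cB}$ contains a cycle and construct from it an infinite non-backtracking walk in $T$, which is impossible in a finite tree.

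Since $\G_{\cB}$ is bipartite (edges join only two-ended blocks $t_i$ to hanging-Fuchsian blocks $f_j$), any cycle in $\G_{\cB}$ has even length at least four. Suppose then that $B_0 - B_1 - \cdots - B_{k-1} - B_0$ is a simple cycle in $\G_{\cB}$ with $k \geq 4$. Each edge $B_i - B_{i+1 \bmod k}$ of this cycle corresponds to a nonzero entry of the degree refinement of $G$, so by the definition of the degree partition every vertex of $B_i$ has at least one neighbor in $B_{i+1 \bmod k}$ (as a vertex of $T$), whether $B_i$ is two-ended or hanging Fuchsian. Hence, starting at any $v_0 \in B_0$ and choosing $v_{i+1}$ inductively to be any neighbor of $v_i$ in $B_{i+1 \bmod k}$, we obtain a walk $v_0, v_1, v_2, \ldots$ in $T$ that can be continued indefinitely.

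The key observation is that this walk never backtracks: for every $i \geq 1$, $v_{i+1}$ lies in $B_{i+1 \bmod k}$ and $v_{i-1}$ lies in $B_{i-1 \bmod k}$, and these two blocks are distinct because the cycle is simple and $k \geq 3$, so the inequality $v_{i+1} \neq v_{i-1}$ holds automatically. A non-backtracking walk in the tree $T$ is necessarily a simple path: a first revisit $v_j = v_i$ with $i < j$ would force either a self-loop ($j = i+1$), an immediate backtrack ($j = i+2$), or a simple cycle of length $j-i \geq 3$ in $T$, each of which is excluded. But a simple path in the finite tree $T$ has length at most $|V(T)|-1$, whereas our walk is infinite. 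This contradiction shows that $\G_{\cB}$ has no cycles. Combined with the fact that $\G_{\cB}$ is connected (being the image of the connected tree $T$ under the natural block-projection), we conclude that $\G_{\cB}$ is a tree.

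The main step is the block-separation observation around any simple cycle of length at least three; this is where the hypothesis that the JSJ graph $T$ is a tree is decisively used, for in a graph with cycles the lifted walk could close up without backtracking and the contradiction would collapse.
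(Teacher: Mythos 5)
Your proof is correct and rests on the same key fact as the paper's: once two blocks are joined in the graph of blocks, every vertex of either block has a neighbor in the other, so a cycle of blocks forces structure in $T$ incompatible with its being a finite tree. The paper packages the contradiction as a finite induced subgraph of $T$ in which every vertex has valence at least two (hence no leaf), while you package it as an infinite non-backtracking walk that would have to be a simple path; these are two standard forms of the same argument, so your route is essentially the paper's.
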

  \begin{proof}
    Suppose $G \in \cC$ has JSJ graph a tree $T$, and let $\G_{\cB}$ be the graph of blocks of $G$. Suppose towards a contradiction that $\G_{\cB}$ is not a tree. Then $\G_{\cB}$ contains an embedded cycle $\gamma$. Without loss of generality, suppose the vertices in the cycle $\gamma$ are labeled $\{t_1, f_1, t_2, \ldots, t_k, f_k\}$ with $t_i$ adjacent to $f_i$ and $f_{i-1}$, with indices taken mod $k$, and likewise for $f_i$. Let $T_1, \ldots T_k, F_1, \ldots F_k$ be the corresponding blocks in the degree partition of $T$. Consider the subgraph of $T$ induced by the vertices in $\cup_{i=1}^k T_i \cup F_i$. Choose a connected component $C$ of the subgraph; then $C$ must be a finite tree. However, each vertex in $C$ has valance at least two. That is, if $v_i \in F_i$, then $v_i$ is adjacent to some vertices $u_i \in T_i$ and $u_{i+1} \in T_{i+1}$, with indices taken mod $k$. Since the cycle $\gamma$ is embedded, $T_i \cap T_{i+1} = \emptyset$; so, $u_i \neq u_{i+1}$. Similarly, if $v_i \in T_i$, $v_i$ has valance at least two, a contradiction.     
  \end{proof}
  
  \begin{defn} \label{defn:no2cycles}
   Let $G \in \cC$ with degree refinement and related graphs as defined in Definition~\ref{def:deg_ref_G}. We say {\it the augmented graph of blocks of $G$ has no $2$-cycles at even distance bounded by Type~I vertices} if whenever $t_1,f_1,t_2, \ldots, f_{k-1},t_k$ is an embedded path in $\G_0$ with $t_i \in V_1$ and $f_j \in V_2$, then if $n_{11}>1$, then $n_{ij}>1$ only if $i=j$.
  \end{defn}

  \begin{lemma} \label{lemma:bad_double}
    Suppose $G \in \cC$ has JSJ graph a tree. Then the augmented graph of blocks of $G$ has no $2$-cycles at even distance bounded by Type~I vertices.
  \end{lemma}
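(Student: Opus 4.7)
The plan is to argue by contradiction via a degree-counting argument on an induced subgraph of $\Lambda$. Suppose $G \in \cC$ has JSJ graph $\Lambda$ a tree but the augmented graph of blocks $\G_0$ violates (M2). Then there is an embedded path $t_1, f_1, t_2, \ldots, f_{k-1}, t_k$ in $\G_0$ with $n_{11} > 1$ and some $n_{ij} > 1$ with $i \neq j$. By Lemma~\ref{lemma:no_cycle}, $\G_{\cB}$ is a tree, so the only adjacencies in $\G_{\cB}$ among the path-blocks are the edges of the path itself, forcing $n_{ij} = 0$ whenever $(t_i, f_j)$ is not a path edge. In particular the only off-diagonal on-path edges take the form $(f_i, t_{i+1})$, so we obtain some $\ell \in \{1, \ldots, k-1\}$ with $n_{\ell+1, \ell} > 1$.

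Next, I truncate the path at $T_{\ell+1}$ and let $S \subseteq V(\Lambda)$ be the union of all vertices of $\Lambda$ in the blocks $T_1, F_1, T_2, \ldots, F_\ell, T_{\ell+1}$. Let $\Lambda_S$ denote the subgraph of $\Lambda$ induced on $S$; since $\Lambda$ is a tree, $\Lambda_S$ is a nonempty finite forest. The main step is to verify that every vertex of $\Lambda_S$ has degree at least two in $\Lambda_S$. A vertex $v \in T_1$ has $n_{11} \ge 2$ neighbors in $F_1 \subseteq S$. A vertex $v \in T_i$ for $1 < i \le \ell$ has $n_{i,i-1} \ge 1$ neighbors in $F_{i-1}$ and $n_{ii} \ge 1$ neighbors in $F_i$, giving at least two distinct neighbors in $S$. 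A vertex $v \in T_{\ell+1}$ has $n_{\ell+1, \ell} \ge 2$ neighbors in $F_\ell \subseteq S$. Finally, for $w \in F_j$ with $1 \le j \le \ell$, the convention $\iota(w, s) = \infty$ for $w \in V_2$ adjacent to $s \in V_1$ implies that adjacency in $\G_{\cB}$ forces every vertex of the corresponding $F$-block to have at least one neighbor in the corresponding $T$-block; since $F_j$ is adjacent to both $T_j$ and $T_{j+1}$ in $\G_{\cB}$, the vertex $w$ has distinct neighbors in $T_j$ and $T_{j+1}$, both in $S$.

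The argument closes by noting that a nonempty finite forest contains a vertex of degree at most one, contradicting the bound above. Hence (M2) must hold for any $G \in \cC$ with JSJ graph a tree. The most delicate point will be the verification that $F$-block vertices automatically have a neighbor in every adjacent $T$-block: this relies on the specific form of the $F$-rows of the degree refinement, where each entry is $0$ or $\infty$, a structural feature that follows directly from the definition of $\iota$ on $V_2$-vertices.
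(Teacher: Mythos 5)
Your proposal is correct and follows essentially the same route as the paper: both assume a violation, use Lemma~\ref{lemma:no_cycle} to reduce it to a doubled edge $n_{\ell+1,\ell}>1$ along the path, take the subgraph of the JSJ tree induced by the blocks $T_1,F_1,\ldots,F_\ell,T_{\ell+1}$, and derive a contradiction because every vertex of this finite forest has degree at least two (with the same key observations: $n_{11}>1$ at one end, the doubled edge at the other, and the $0/\infty$ structure of the $F$-rows forcing each $F$-block vertex to meet every adjacent $T$-block). The only differences are cosmetic (an index shift and phrasing the final contradiction via a forest leaf rather than a component that is a finite tree).
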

  \begin{proof}
   Suppose $G \in \cC$ has JSJ graph a tree. 
   By Lemma~\ref{lemma:no_cycle}, the graph of blocks of $G$ is a tree. Thus, in the notation of Definition~\ref{defn:no2cycles}, $n_{ij} \geq 1$ only if $i=j$ or $i = j+1$.
   Suppose towards a contradiction that $n_{\ell,\ell-1} >1$ for some $\ell$ with $1 \leq \ell \leq k$. Let $T_1, \ldots, T_{\ell} \subset V_1$ and $F_1, \ldots, F_{\ell-1}\subset V_2$ be the corresponding blocks in the degree partition of $T$. Consider the subgraph of $T$ induced by the vertices in $(\cup_{i=1}^{\ell} T_i) \cup (\cup_{i=1}^{\ell-1} F_i)$. Choose a connected component $C$ of this subgraph; then $C$ must be a finite tree. However, each vertex in $C$ has valance at least two. That is, for $v_i\in F_i$, $1 \leq i \leq \ell-1$, $v_i$ is adjacent to some vertices $u_i \in T_i$ and $u_{i+1}\in T_{i+1}$ with $u_i \neq u_{i+1}$ since $T_i \cap T_{i+1} = \emptyset$. Likewise, $v_i \in T_i$ has valance two for $2 \leq i \leq \ell-1$. Since $n_{11}, n_{\ell,\ell-1}>1$, if $v_i \in T_i$ for $i=1, \ell$, then $v_i$ has valance at least two. Thus, each vertex of $C$ has valence at least two, a contradiction since $C$ is a finite tree. 
  \end{proof}
  
  \begin{remark}
   In the notation of Definition~\ref{defn:no2cycles}, a group in $\cC$ may have $2$-cycles at even distance bounded by Type II vertices. An example is given in the top of Figure~\ref{figure:ex_3}.
  \end{remark}

\subsection{Construction} \label{subsec:construction}

  \begin{outline} \label{outline_const}
   Let $M$ be a degree refinement of a group in $\cC$ that satisfies Conditions (M1) and (M2) of Theorem~\ref{thm_sec1_qi_char}. Let $\G_{\cB}$ be the graph of blocks of $M$, and let $\G_0$ be the augmented graph of blocks of~$M$. We will describe a finite process to construct a finite bipartite tree with degree refinement $M$ (as in Definition \ref{def:bipartite}). The bipartite graph $\G_0$ has degree refinement $M$, but, in general, $\G_0$ is not a tree. We will perform a finite series of moves on $\G_0$ to produce a finite tree. The moves on the graph $\G_0$ recursively unwrap the cycles of length two in $\G_0$ so that each move preserves the degree refinement. 
  \end{outline}

  An image of the following definition appears in Figure~\ref{figure:split_vertex}.
  
     \begin{figure}
      \begin{overpic}[scale=.8,  tics=5]{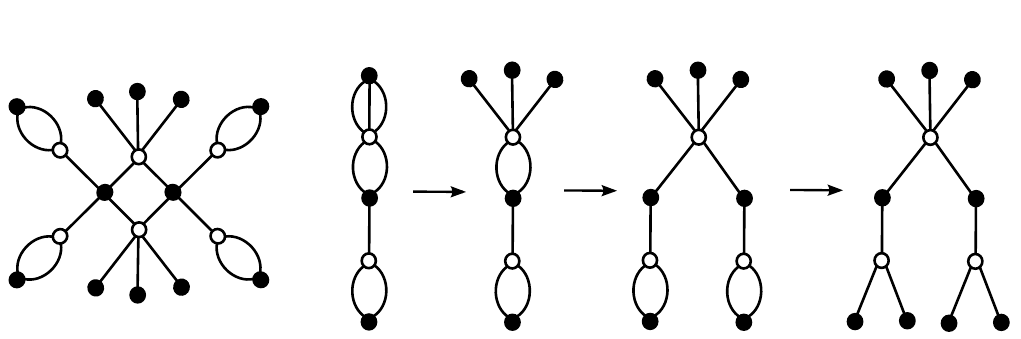}
      \put(12,30){$\Lambda$}
      \put(34,30){$\G_0$}
      \put(90,30){$T$}
        \end{overpic}
	\caption{{\small Example: $\Lambda$ is the JSJ graph of a group in $\cC$ that satisfies conditions (M1) and (M2) of Theorem~\ref{thm_sec1_qi_char}, and $\G_0$ is the augmented graph of blocks of $\Lambda$. Between the graphs $\Gamma_0$ and $T$, the {\it split a vertex} move was performed four times to produce a finite tree with the same degree refinement as $\Lambda$ and $\G_0$.   }}
      \label{figure:split_vertex}
     \end{figure}

\begin{defn} \label{def:split} (Split a MHF vertex.)
  Let $\Lambda$ be a bipartite graph with $V(\gL) = V_1 \sqcup V_2$ as defined in Definition \ref{def:bipartite}. Let $t \in V_1$ and $f \in V_2$, and suppose $\Lambda$ has $r>0$ edges $e_1, \ldots, e_r$ with endpoints $\{t,f\}$. Let $m_i \in \Lambda$ be the midpoint of the edge $e_i$, and suppose that $\cup_{i=1}^r m_i$ separates $\gL$ into two components. Then $\Lambda - \cup_{i=1}^r \Int(e_i)$ has two components; let $C \subset \Lambda$ be the component containing $f$, and let $C' \subset \Lambda$ be the component containing $t$.   
   (So, $\gL = C \cup C' \cup \left( \cup_{i=1}^r e_i \right)$.)   
  Define $\gL'$ to be the following finite graph, which is obtained by {\it splitting~$f$} into $r$ vertices. For $1 \leq i \leq r$, let $C_i$ be a graph isomorphic to $C$, and let $\phi_i:C \rightarrow C_i$ be a graph isomorphism. Let $f_i = \phi_i(f)$. Let $\gL'$ be the graph formed by the union of $C'$, $\cup_{i=1}^r C_i$, and $r$ edges $e_1', \ldots, e_r'$, where $e_i'$ has one endpoint $t \in C'$ and the other endpoint $f_i \in C_i$. Let $p:\gL' \rightarrow \gL$ be the projection that is the identity on $C'$, maps $e_i'$ to $e_i$ and $C_i$ to $C$ by an isomorphism. 
\end{defn}

  \begin{lemma} \label{lemma:deg_ref_pres}
   The degree refinement of $\gL$ is equivalent to the degree refinement of $\gL'$, where $\gL$ and $\gL'$ are the graphs defined in Definition~\ref{def:split}. 
  \end{lemma}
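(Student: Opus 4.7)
The plan is to pull back the degree partition of $\Lambda$ along the projection $p \colon \Lambda' \to \Lambda$ and verify that the resulting partition of $V(\Lambda')$ is uniform with the same matrix, then argue that it is as coarse as possible, so it coincides with the degree partition of $\Lambda'$. Write the degree partition of $\Lambda$ as $\{M_i\}_{i=1}^n$ with matrix $(m_{ij})$, and set $M_i' = p^{-1}(M_i) \subset V(\Lambda')$; since $p$ respects the bipartition, so does $\{M_i'\}$.

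The first step is to check that for every $v' \in M_i'$ and every $j$, $\sum_{s' \in M_j'} \iota_{\Lambda'}(v', s') = m_{ij}$, by a case analysis on where $v'$ lies in $\Lambda' = C' \cup \bigcup_{i=1}^r C_i \cup \bigcup_{i=1}^r e_i'$. If $v' \in C' \setminus \{t\}$, then $p$ identifies $v'$'s edge-neighborhood in $\Lambda'$ with that of $v' = p(v')$ in $\Lambda$, and since $p$ carries blocks to blocks the sum is unchanged. If $v' = t$, its edges into $C'$ are unchanged, while the single edges $e_1', \ldots, e_r'$ to $f_1, \ldots, f_r$ collectively contribute $r$ to the pullback block containing $f$, exactly matching the $r$ edges from $t$ to $f$ in $\Lambda$; no other contribution is possible, since vertices in $C_i \setminus \{f_i\}$ are not adjacent to $t$ in $\Lambda'$ and their $p$-images in $C \setminus \{f\}$ are not adjacent to $t$ in $\Lambda$. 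If $v' \in C_i \setminus \{f_i\}$, all of $v'$'s $\Lambda'$-neighbors lie in $C_i$, and since $p|_{C_i} \colon C_i \to C$ is a graph isomorphism it induces a bijection between $v'$'s $\Lambda'$-neighbors in $M_j'$ and $p(v')$'s $\Lambda$-neighbors in $M_j$, whence the sum equals $m_{ij}$. Finally, if $v' = f_i$, then $\iota_{\Lambda'}(f_i, t) = \infty = \iota_\Lambda(f, t)$ because $f, f_i \in V_2$, and $f_i$'s remaining neighbors lie in $C_i$ and correspond via $p|_{C_i}$ to $f$'s neighbors in $C$, so again the sum matches.

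The second step is to show $\{M_i'\}$ is the coarsest uniform partition of $\Lambda'$. Since $\{M_i'\}$ is uniform, the degree partition $\{N_k'\}_{k=1}^{n'}$ of $\Lambda'$ is coarser, so each $N_k'$ is a union $\bigcup_{i \in K_k} M_i'$ for some partition $\{K_k\}$ of $\{1, \ldots, n\}$. For any $v' \in M_i' \subset N_k'$, the previous step yields $\sum_{s' \in N_\ell'} \iota_{\Lambda'}(v', s') = \sum_{i' \in K_\ell} m_{i, i'}$. Uniformity of $\{N_k'\}$ forces this quantity to be independent of the choice of $i \in K_k$, which is exactly the condition that the induced partition $\{N_k = \bigcup_{i \in K_k} M_i\}$ of $V(\Lambda)$ is uniform. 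If some $|K_k| \geq 2$, this contradicts the minimality of the number of blocks in $\{M_i\}$; hence each $|K_k| = 1$, and $\{M_i'\}$ is the degree partition of $\Lambda'$ with matrix equivalent to $(m_{ij})$.

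I expect the main obstacle to be the bookkeeping in the case analysis above, especially when the block of $f$ contains additional vertices lying in $C \setminus \{f\}$ or in $C' \setminus \{t\}$: contributions from $C'$ are identical under $p$, those from $\{f_1, \ldots, f_r\}$ collectively mimic the multi-edge from $t$ to $f$, and those inside each $C_i$ are handled by the isomorphism $p|_{C_i}$. The $\infty$-valued behavior of $\iota$ on $V_2$-vertices conveniently absorbs any residual uncertainty about edge multiplicities there.
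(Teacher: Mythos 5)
Your proof is correct, and its first step is exactly the paper's argument: the paper simply pulls back the degree partition of $\Lambda$ along $p$ (setting $T_i'=p^{-1}(T_i)$, $F_j'=p^{-1}(F_j)$) and observes that, since adjacencies are affected only between $t$ and $f$, each vertex of $T_i'$ is still joined to $F_j'$ by $n_{ij}$ edges and each vertex of $F_j'$ still meets $T_i'$ when $n_{ij}>0$; your case analysis is a fleshed-out version of that one sentence, and it handles the delicate points (the $r$ edges $e_i'$ replacing the multi-edge at $t$, and the $\infty$-entries at $V_2$-vertices) correctly. Your second step goes beyond what the paper writes: the paper stops after exhibiting the uniform pullback partition and does not check that it is the degree partition of $\Lambda'$ (i.e., that no coarser uniform partition exists), whereas you deduce this from minimality of the number of blocks for $\Lambda$; that argument is sound, and strictly speaking the equivalence of the degree refinements does require it. The one assertion you use without proof is that the degree partition of $\Lambda'$ is coarser than the uniform partition $\{M_i'\}$ (comparability is not immediate from the ``minimum number of blocks'' definition); this is the standard fact that every uniform partition refines the degree partition, which follows by an easy induction along the refinement procedure of Definition~\ref{def:deg_ref_G} (a uniform partition refines the augmented-valence partition and stays finer after each refinement step), so it is a one-line addition rather than a gap.
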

  \begin{proof}
   Suppose $T_1, \ldots, T_n, F_1, \ldots, F_m$ are the blocks in the degree partition of $\gL$. As in Definition~\ref{def:bipartite}, there exist constants $n_{ij}$ with $1 \leq i \leq n$ and $1 \leq j \leq m$ so that each vertex in $T_i$ is adjacent via $n_{ij}$ edges to $F_j$ and if $n_{ij}>0$, each vertex in $F_j$ is adjacent via at least one edge to $T_i$. There exists a partition of the vertices of $\gL'$ into blocks $T_1', \ldots, T_n', F_1', \ldots, F_m'$ where $T_i' =  p^{-1}(T_i)$ and $F_i' = p^{-1}(F_i)$, where $p$ is the projection map in Definition~\ref{def:split}. By construction, since adjacencies are affected only between $f$ and $t$, each vertex in $T_i'$ is adjacent via $n_{ij}$ edges to $F_j'$ and if $n_{ij}>0$, each vertex in $F_j'$ is adjacent via at least one edge to $T_i'$.
  \end{proof}

  \begin{construction}\label{const:new_graph}
   Let $\G_0$ be the augmented graph of blocks of $M$ as in Outline~\ref{outline_const}. Suppose $V(\G_0) = V_1 \sqcup V_2$ as in Definition~\ref{def:deg_ref_G}. The following construction produces a finite tree $T$ with degree refinement $M$. Since $\Gamma_0$ has degree refinement $M$, if $\Gamma_0$ is a tree, no additional moves are necessary. Otherwise, suppose $\G_0$ is not a tree. Let $t \in V_1$ so that there exists $f \in V_2$ and $r>1$ edges $e_1, \ldots, e_r$ connecting $t$ and $f$. By Condition~(M1), removing the interiors of the edges in this collection disconnects the graph $\Gamma_0$. Let $C \subset \Lambda - \bigcup_{i=1}^r \Int(e_i)$ be the component containing $f$.  Let $D = C \cup \bigcup_{i=1}^r e_i$. 
   
   Define a height function $h:V(D) \rightarrow \N$ by $h(v) = d(v,t)$. Then $h(v) \in 2\Z$ if and only if $v \in V_1$. Furthermore, if there exists $t'\in D \cap V_1$ and $f' \in D \cap V_2$ and $r'>1$ edges connecting $t'$ and $f'$, then $h(f') = h(t') +1$; otherwise, Condition (M2) would be violated. 
   
   Perform a series of moves recursively to split vertices (as in Definition \ref{def:split}) of $D$ at height $1, 3, 5, \ldots$, and so on. Let $D^k$ denote the graph obtained after vertices at height $k$ have been split. As in Definition~\ref{def:split}, there are projections $D^k \rightarrow D^{k-2}$ for $k\geq 3$ and $D_1 \rightarrow D$. Denote the composition of these maps as the projection $p_k:D^k \rightarrow D$ for $k \geq 1$. In an abuse of notation, we use $t$ to denote the unique vertex in $p_k^{-1}(t) \in D^k$. 
   Let $h_k:D^k \rightarrow \N$ be given by $h_k(v) = d(v,t)$. Then $h_k(v) = h(p_k(v))$ for all $v \in D^k$. Let $D^k_{\leq k}$ be the subgraph of $D^k$ induced by vertices with height $\leq k$. Then $D^k_{\leq k}$ is a finite tree. Since $D$ is a finite graph, there exists $N \in \N$ so that $h(v) <N$ for all $v \in V(D)$. Thus, after finitely many moves, the resulting graph $D^N$ is a tree. 
   
   The moves on $\G_0$ are the identity on $\G_0 \backslash D$, a graph which may still contain (finitely many) embedded cycles of length two. For each set of additional cycles, the above procedure can be performed. By Condition (M1) $\G_{\cB}$ is a tree, so these cycles of length $2$ are the only embedded cycles in $\G_0$. Hence, after finitely many steps, the resulting graph is a tree. In addition, by Lemma \ref{lemma:deg_ref_pres}, the resulting graph has the same degree refinement as $\G_0$, as desired.    
  \end{construction}

  \begin{prop} \label{prop:make_tree_gp}
   If $G \in \cC$ satisfies Conditions (M1) and (M2) of Theorem~\ref{thm_sec1_qi_char}, then $G$ is quasi-isometric to a group with JSJ graph a tree.
  \end{prop}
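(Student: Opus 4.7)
The plan is to directly invoke the machinery developed in Construction~\ref{const:new_graph} and to couple it with the quasi-isometry classification of Theorem~\ref{thm:qi_same_deg_ref}. Let $M$ denote the degree refinement of $G$, let $\Gamma_{\cB}$ be the graph of blocks and $\Gamma_0$ the augmented graph of blocks. By Condition (M1), $\Gamma_{\cB}$ is a tree, so the only obstruction to $\Gamma_0$ being a tree is the presence of embedded $2$-cycles; by Condition (M2) these $2$-cycles are controlled, and removing the interiors of the edges forming any such $2$-cycle separates $\Gamma_0$, which is exactly the hypothesis needed to apply the \emph{split a vertex} move of Definition~\ref{def:split}.

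First, I would apply Construction~\ref{const:new_graph} to $\Gamma_0$ to produce a finite bipartite graph $T$ obtained from $\Gamma_0$ by a finite sequence of splitting moves. Construction~\ref{const:new_graph} already shows that, organizing the moves by the height function $h$ on each component $D$ created by a $2$-cycle and recursively splitting vertices at odd heights, one exhausts all embedded $2$-cycles of $\Gamma_0$ in finitely many steps; moreover Condition~(M2) guarantees that every $2$-cycle encountered in the recursion occurs between adjacent heights, so the split-a-vertex hypothesis is satisfied at each stage. The result $T$ is a finite bipartite tree. By Lemma~\ref{lemma:deg_ref_pres}, each split preserves the degree refinement, so $T$ has degree refinement equivalent to $M$.

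Next, I would realize $T$ as the JSJ graph of a group $G' \in \cC$. Using the surface-amalgam construction of Definition~\ref{def:geom_amal}, I build a $2$-dimensional hyperbolic $P$-manifold $X(T)$ whose underlying bipartite graph is $T$: each vertex in $V_1(T)$ contributes a circle vertex space, and each vertex $v \in V_2(T)$ contributes a connected surface of negative Euler characteristic with one boundary component per incident edge (the required surface exists because the corresponding vertex of $\Gamma_0$, and hence of $G$, already had this valence, and the valence is preserved by the split moves, ensuring all valence bounds of Definition~\ref{def:geom_amal} are inherited from $G$). Setting $G' = \pi_1(X(T))$, the JSJ graph of $G'$ is the tree $T$ by the correspondence recalled after Definition~\ref{def:geom_amal}, and so $G' \in \cC$ has JSJ graph a tree.

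Finally, since $G$ and $G'$ have equivalent degree refinements, Theorem~\ref{thm:qi_same_deg_ref} yields that $G$ is quasi-isometric to $G'$, completing the proof. The main technical point to verify carefully is the realizability step: one must check that the valence bounds and bipartite-type labels required of a JSJ graph in $\cC$ (and in particular those imposed in Definition~\ref{def:geom_amal}) are inherited by $T$ from $\Gamma_0$ and hence from $G$; this follows because both the vertex types and the row/column sums of the degree refinement are preserved under the equivalence relation of Definition~\ref{def:deg_ref_equiv} and under the splitting move, so the constraints satisfied by $G$ transfer verbatim to $T$.
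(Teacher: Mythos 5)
Your argument follows the paper's proof essentially verbatim: apply Construction~\ref{const:new_graph} (with Lemma~\ref{lemma:deg_ref_pres}) to produce a finite tree with degree refinement equivalent to that of $G$, realize it as the JSJ graph of some $G' \in \cC$, and conclude by Theorem~\ref{thm:qi_same_deg_ref}. The only difference is that you spell out the realization of $G'$ as a geometric amalgam via Definition~\ref{def:geom_amal}, a detail the paper leaves implicit, and this is correct since the splitting moves preserve local valences.
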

  \begin{proof}
   Let $M$ be the degree refinement of $G$, and let $\G_0$ be the augmented graph of blocks of $M$. Then $\G_0$ has degree refinement $M$, and by Construction~\ref{const:new_graph}, there exists a finite tree $T$ with degree refinement~$M$. Let $G' \in \cC$ be any group with JSJ graph $T$. By Theorem~\ref{thm:qi_same_deg_ref}, the groups $G$ and $G'$ are quasi-isometric. 
  \end{proof}
  
  \subsection{Characterization} \label{sec:char}

  We collect the above conditions and constructions to prove one of the main theorems of the paper.

   \begin{thm} \label{thm_qi_char_later_sec}
  Let $G \in \cC$. The following are equivalent.
  \begin{enumerate}
     \item The group $G$ is quasi-isometric to a right-angled Coxeter group.
    \item The group $G$ is quasi-isometric to a group generated by finite-order elements.
    \item The group $G$ is quasi-isometric to a group with JSJ graph a tree.
    \item The degree refinement of $G$ satisfies the two conditions:
      \begin{itemize}
	\item[(M1)] The graph of blocks of $G$ is a tree.
	\item[(M2)] The augmented graph of blocks of $G$ has no $2$-cycles at even distance bounded by Type I vertices. 
      \end{itemize}
  \end{enumerate}
 \end{thm}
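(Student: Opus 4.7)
The plan is to assemble the proof from the pieces already developed in Section 4.1 and Section 4.2, together with the equivalence of (1)--(3) cited from Dani--Stark--Thomas. First I would dispose of the equivalence of (1), (2), and (3): the implication (1)$\Rightarrow$(2) is immediate since right-angled Coxeter groups are generated by involutions; (2)$\Rightarrow$(3) follows from the observation noted in the introduction that a group generated by finite-order elements has no surjection onto $\Z$, forcing the underlying graph of its JSJ decomposition to be a tree; and (3)$\Rightarrow$(1) is \cite[Theorem~1.16]{danistarkthomas}. So the novel content is the equivalence (3)$\Leftrightarrow$(4).

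For (3)$\Rightarrow$(4), the plan is to exploit the fact that the graph of blocks and the augmented graph of blocks depend only on the degree refinement of $G$, which is a complete quasi-isometry invariant within $\cC$ by Theorem~\ref{thm:qi_same_deg_ref}. Concretely, suppose $G$ is quasi-isometric to some $G' \in \cC$ whose JSJ graph is a tree. Then by Corollary~\ref{cor_same_block_gr}, $G$ and $G'$ have isomorphic graphs of blocks and isomorphic augmented graphs of blocks. Applying Lemma~\ref{lemma:no_cycle} to $G'$ yields that the graph of blocks of $G'$, and hence of $G$, is a tree, establishing (M1). Applying Lemma~\ref{lemma:bad_double} to $G'$ yields (M2) for $G'$, which transfers to $G$ by the same isomorphism of augmented graphs of blocks.

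For (4)$\Rightarrow$(3), this is exactly the content of Proposition~\ref{prop:make_tree_gp}: if the degree refinement of $G$ satisfies (M1) and (M2), then Construction~\ref{const:new_graph} produces a finite tree $T$ with the same degree refinement as $G$, and any group $G' \in \cC$ with JSJ graph $T$ is then quasi-isometric to $G$ by Theorem~\ref{thm:qi_same_deg_ref}.

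There is no real obstacle here; the work has been done in the preceding subsections, and the role of this theorem is simply to close the cycle of implications. The only point to watch is that conditions (M1) and (M2) are genuinely invariants of the quasi-isometry class (not just of a particular presentation of $G$), which is why Corollary~\ref{cor_same_block_gr} is the right tool for the (3)$\Rightarrow$(4) step rather than attempting to argue directly on the JSJ graph of $G$ itself.
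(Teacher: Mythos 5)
Your proposal is correct and follows essentially the same route as the paper: (3)$\Leftrightarrow$(4) via Corollary~\ref{cor_same_block_gr} together with Lemmas~\ref{lemma:no_cycle} and~\ref{lemma:bad_double} in one direction and Proposition~\ref{prop:make_tree_gp} (i.e.\ Construction~\ref{const:new_graph} plus Theorem~\ref{thm:qi_same_deg_ref}) in the other, with (1)$\Rightarrow$(2)$\Rightarrow$(3) immediate and (3)$\Rightarrow$(1) from \cite[Theorem~1.16]{danistarkthomas}. The only cosmetic difference is that the paper inserts the intermediate remark that $G$ is quasi-isometric to a geometric amalgam of free groups with JSJ graph a tree before citing Dani--Stark--Thomas, which does not change the argument.
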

 
 \begin{proof}
  Let $G \in \cC$. We first show (3) and (4) are equivalent. To prove (3) implies (4), let $G' \in \cC$ be a group with JSJ graph a tree which is quasi-isometric to $G$. Let $\G_{\cB}$ and $\G_{\cB}'$ be the graphs of blocks of $G$ and $G'$, respectively, and let $\G_0$ and $\G_0'$ be the augmented graphs of blocks of $G$ and $G'$, respectively. By Corollary~\ref{cor_same_block_gr}, $\G_{\cB} \cong \G_{\cB}'$ and $\G_0 \cong \G_0'$. Therefore, by Lemma \ref{lemma:no_cycle} and Lemma \ref{lemma:bad_double}, Conditions (M1) and (M2) hold. 
  By Proposition~\ref{prop:make_tree_gp}, (4) implies (3).
  
  Clearly, (1) implies (2) and (2) implies (3). Suppose $G \in \cC$ is quasi-isometric to a group with JSJ graph a tree. Then $G$ is quasi-isometric to a geometric amalgam of free groups with JSJ graph a tree. Thus, by \cite[Theorem~1.16]{danistarkthomas}, $G$ is quasi-isometric to a right-angled Coxeter group, so (3) implies (1), concluding the proof. 
 \end{proof}

  \section{Commensurability classes} \label{sec_comm_classes}

  \subsection{The structure of finite-index subgroups}
  
    The structure of subgroups of a graph of groups is described by Scott--Wall in \cite[Section 3]{scottwall}. In this subsection, we record the facts and constructions relevant to this paper. 
    
    As described in Section~\ref{sec:jsj}, the JSJ decomposition $\cG$ of a group $G \in \cC$ is a splitting of $G$ as the fundamental group of a graph of groups. There is a finite CW-complex $X$ which is a graph of spaces associated to $\cG$ and so that $G \cong \pi_1(X)$. Any finite-index subgroup $H \leq G$ is the fundamental group of a graph of spaces $Y$ which finitely covers $X$. Thus, $H$ splits as a graph of groups. Moreover, the graph of groups splitting of $H$ associated to this graph of spaces $Y$ is the JSJ decomposition of $H$. The details are as follows. 
  
  \begin{prop} \label{prop:sub_struc}
   Let $G \in \cC$ and $H \leq G$ be a finite-index subgroup. Let $\cG$ be the JSJ decomposition of $G$ with underlying graph $\gL$. The subgroup $H$ is the fundamental group of a graph of groups $\cH$ associated to the graph of spaces $Y$ described above. If $\G$ is the underlying graph of $\cH$, then for each $w \in V(\G)$, there exists $v \in V(\gL)$ and $g_w \in G$ so that $H_w = H \cap g_wG_vg_w^{-1}$ and $H_w$ is a finite-index subgroup of $g_wG_vg_w^{-1}$. The graph of groups $\cH$ is the JSJ decomposition of~$H$. 
  \end{prop}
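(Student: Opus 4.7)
The plan is to realize $H$ as the fundamental group of a finite cover $Y$ of $X$, pull the graph of spaces decomposition back to $Y$, and identify the resulting graph of groups $\cH$ with the JSJ decomposition of $H$. First I would note that $H$ itself lies in $\cC$: it inherits one-endedness and hyperbolicity from $G$, and it is not Fuchsian, because the only one-ended hyperbolic Fuchsian groups are closed hyperbolic $2$-orbifold groups, and if $H$ were such a group then $G$ would be virtually a closed surface group and hence Fuchsian by the work of Tukia, Gabai, and Casson--Jungreis. In particular, the Bowditch JSJ decomposition of $H$ is well-defined.

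Next, let $\widetilde{X} \to X$ denote the universal cover, and set $Y = H \backslash \widetilde{X}$, giving a degree-$[G:H]$ cover $p : Y \to X$. The graph of spaces decomposition of $X$ lifts to $Y$: for each vertex or edge space $Z$ of $X$, the preimage $p^{-1}(Z)$ is a disjoint union of connected covers of $Z$, each of which is taken as a vertex or edge space of $Y$. This is the standard Scott--Wall construction \cite{scottwall}, which directly produces a graph of groups $\cH$ with underlying graph $\G$ and $\pi_1(\cH) = H$. For a vertex $w \in V(\G)$ corresponding to a component $Y_w$ of $p^{-1}(X_v)$, the Galois correspondence of covering spaces identifies $Y_w$ with the cover of $X_v$ determined by a double-coset representative $g_w \in G$, giving $H_w = \pi_1(Y_w) = H \cap g_w G_v g_w^{-1}$; the finite degree of $Y_w \to X_v$ then forces $[g_w G_v g_w^{-1} : H_w] < \infty$. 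The analogous statement for edge groups is identical.

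I would then verify the JSJ-defining properties of $\cH$. Edge groups of $\cG$ are $2$-ended, and finite-index subgroups of $2$-ended groups are $2$-ended, so edge groups of $\cH$ are $2$-ended. Type (1) vertex groups restrict to $2$-ended subgroups. A type (2) vertex group $g_w G_v g_w^{-1}$ is maximal hanging Fuchsian; its finite-index subgroup $H_w$ acts on a finite cover of the convex-core orbifold, which is again a compact $2$-orbifold with non-empty boundary, and hence $H_w$ is again bounded Fuchsian with peripheral subgroups equal to the $H_w$-intersections of the original peripherals---precisely the edge groups of $\cH$ incident to $w$. Since $\cG$ has no type (3) vertices, neither does $\cH$.

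The main obstacle is verifying the maximality condition needed for $\cH$ to be the JSJ decomposition of $H$, not merely some splitting of $H$. My approach is via canonicity: Bowditch constructs the JSJ tree $T$ of $G$ intrinsically from the topology of the Gromov boundary $\partial G$, and since $H$ has finite index in $G$ we have $\partial H = \partial G$, so applying Bowditch's construction to $H$ returns the same tree $T$. The restricted $H$-action on $T$ is cocompact, its vertex/edge stabilizers are exactly the groups $H_w$ and $H_f$ computed above, and its Bass--Serre quotient is $\cH$. Therefore $\cH$ is the JSJ decomposition of $H$, as claimed.
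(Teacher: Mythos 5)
Your proposal is correct and follows essentially the same route as the paper: the covering-space (Scott--Wall) construction gives the graph of groups $\cH$ with $H_w = H \cap g_w G_v g_w^{-1}$ of finite index, and the identification of $\cH$ with the JSJ decomposition of $H$ is deduced from the canonicity of Bowditch's construction via $\partial_\infty H \cong \partial_\infty G$ (the paper phrases this as $H_w$ stabilizing the same necklace or jump in the boundary, which is the same argument as your ``same tree, restricted $H$-action'' formulation). Your extra checks that $H$ lies in $\cC$ and that the vertex and edge groups have the right types are harmless additions the paper leaves implicit.
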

  \begin{proof}
    All statements except the last sentence of the proposition are given in \cite[Section 3]{scottwall}; it remains to show that $\cH$ is the JSJ decomposition of $H$. Indeed, suppose $w \in V(\G)$ so that $H_w = H \cap g_wG_vg_w^{-1}$. By the construction of the JSJ decomposition of $G$ due to Bowditch \cite{bowditch}, the subgroup $g_wG_vg_w^{-1}$ is the stabilizer in $G$ of a distinguished subset $A \subset \p_{\infty} G$ in the visual boundary of $G$ called either a  {\it necklace} or a {\it jump}, depending on whether the group $G_v$ is maximally hanging Fuchsian or $2$-ended, respectively. (See \cite[Section 5]{bowditch}.) Since $H$ is a finite-index subgroup of $G$, the inclusion of $H$ in $G$ induces a homeomorphism from the visual boundary of $H$ to the visual boundary of $G$. Thus, $H_w$ is the stabilizer in $H$ of the subset $A \subset \p_{\infty} H \cong \p_{\infty} G$. Therefore, $H_w$ is a vertex group in the JSJ decomposition of $H$. By the same reasoning, the adjacencies between vertex groups in $\cG$ yield the appropriate adjacencies between vertex groups in $\cH$. Therefore, $\cH$ is the JSJ decomposition of $H$. 
  \end{proof}
 
  \begin{notation} \label{nota:subgroup}
    Suppose $G,G' \in \cC$ are abstractly commensurable. Let $\cG$ and $\cG'$ be the JSJ decompositions of $G$ and $G'$, respectively. Suppose $\cG$ and $\cG'$ have underlying graphs $\Lambda$ and $\gL'$, respectively, with $V(\gL) = V_1 \sqcup V_2$ and $V(\gL') = V_1' \sqcup V_2'$. Suppose each vertex group $G_v$ and $G_{v'}'$ is $2$-ended for $v \in V_1$ and $v'\in V_1'$, respectively; suppose each vertex group $G_v$ and $G_{v'}'$ is maximally hanging Fuchsian for each $v \in V_2$ and $v' \in V_2'$, respectively. Let $H \leq G$ and $H' \leq G'$ be subgroups of finite-index with $H \cong H'$. Let $\cH$ and $\cH'$ be the JSJ decompositions of $H$ and $H'$, respectively. Suppose $\cH$ and $\cH'$ have underlying graphs $\G$ and $\G'$, respectively, with $V(\G) = W_1 \sqcup W_2$ and $V(\G') = W_1' \sqcup W_2'$. Suppose each vertex group $H_w$ and $H_{w'}'$ is $2$-ended for $w \in W_1$ and $w'\in W_1'$, respectively; suppose each vertex group $H_w$ and $H_{w'}'$ is maximally hanging Fuchsian for each $w \in W_2$ and $w' \in W_2'$, respectively.
    
    Suppose in the degree partition of $\gL$ the vertices in $V_1$ are contained in blocks $T_1(G), \ldots, T_n(G)$, and the vertices in $V_2$ are contained in blocks $F_1(G), \ldots, F_m(G)$. All groups in $\{G,G',H,H'\}$ are quasi-isometric. Hence, by Theorem~\ref{thm:qi_same_deg_ref}, in the degree partition of $\gL'$ the vertices in $V_1'$ may be partitioned into blocks $T_1(G'), \ldots, T_n(G')$ and the vertices in $V_2'$ may be partitioned into blocks $F_1(G'), \ldots, F_m(G')$ so that the resulting degree refinement matrix for $\gL'$ is equal to the degree refinement matrix for $\gL$ (without permuting the indices of the $T_i(G')$ and $F_i(G')$). Similarly, assume the degree partition of $\G$ into blocks $T_1(H), \ldots, T_n(H), F_1(H), \ldots, F_m(H)$ and the degree partition of $\G'$ into blocks $T_1(H'), \ldots, T_n(H'), F_1(H'), \ldots, F_m(H')$ also yield degree refinement matrices equal to the degree refinement matrix of $\gL$ and $\gL'$ (without permuting the indices of the blocks). 
  
    The degree partitions of the vertices of the JSJ graphs yield natural partitions of the vertex groups in the JSJ decompositions. We will use the following notation:
      \begin{eqnarray*}
       T_i^G = \{ G_v \, | \, v \in T_i(G)\} &\quad\quad& F_i^G = \{ G_v \, | \, v \in F_i(G)\}\\
       T_i^{G'} = \{ G_{v'}' \, | \, v' \in T_i(G')\} &\quad\quad& F_i^{G'} = \{ G_{v'}' \, | \, v' \in F_i(G')\}\\
       T_i^H = \{ H_w \, | \, w \in T_i(H)\} &\quad\quad& F_i^H = \{ H_w \, | \, w \in F_i(H)\}\\
       T_i^{H'} = \{ H_{w'}' \, | \, w' \in T_i(H')\} &\quad\quad& F_i^{H'} = \{ H_{w'}' \, | \, w' \in F_i(H')\}.\\
      \end{eqnarray*}
    \end{notation}
  
  \begin{lemma} \label{lemma:same_block}
   Let $H \leq G$ be a finite-index subgroup with the notation defined above. Let $w \in V(\G)$ so that there exists $v \in V(\gL)$ and $g_w \in G$ (by Proposition~\ref{prop:sub_struc}) so that $H_w = H \cap g_wG_vg_w^{-1}$. Then, $G_v \in F_i^G$ if and only if $H_w \in F_i^H$. Similarly, $G_v \in T_i^G$ if and only if $H_w \in T_i^H$. 
  \end{lemma}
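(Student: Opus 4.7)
The plan is to exploit the fact, furnished by Proposition~\ref{prop:sub_struc}, that both $G$ and $H$ act cocompactly on a common tree $T$---the JSJ tree of $G$, which is also the Bass--Serre tree of $\cH$---with quotients $\gL = T/G$ and $\G = T/H$. Writing $\pi_G : T \to \gL$ and $\pi_H : T \to \G$ for the projections, the vertex group $G_v$ is the $G$-stabilizer of some lift $\tilde v \in V(T)$, and the identity $H_w = H \cap g_w G_v g_w^{-1}$ exactly says that $H_w$ is the $H$-stabilizer of $g_w \tilde v$; in particular $w = \pi_H(g_w \tilde v)$. Since $g_w \in G$ acts on $T$ as a type-preserving automorphism, $\tilde v$ and $g_w \tilde v$ lie in a common $G$-orbit in $V(T)$.

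The key step is to show that the degree partitions of $\gL$ and of $\G$ both pull back, via $\pi_G$ and $\pi_H$ respectively, to one and the same partition of $V(T)$. I would introduce this intrinsic partition recursively: let $Q_1$ partition $V(T)$ by $T$-valence, and let $Q_{k+1}$ refine $Q_k$ so that two vertices are equivalent iff they have equal numbers of $T$-neighbors in each block of $Q_k$. An induction on $k$ identifies $Q_k$ both with the $k$-th iterate of the refinement procedure of Definition~\ref{def:deg_ref_G} applied to $\gL$ and with the $k$-th iterate applied to $\G$. The base case is immediate, since the augmented valence of $v \in V(\gL)$ (respectively $w \in V(\G)$) is defined to equal the $T$-valence of any lift. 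The inductive step rests on the observation that $\sum_{s \in M_j} \iota(r,s)$ counts precisely the number of $T$-neighbors of a lift $\tilde r$ lying in $\pi_G^{-1}(M_j)$; by the inductive hypothesis $\pi_G^{-1}(M_j)$ equals the corresponding $\pi_H^{-1}$-preimage in $\G$, namely a single block of $Q_k$, so the adjacency counts used in Definition~\ref{def:deg_ref_G} are identical whether read on $\gL$ or on $\G$.

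Passing to the stable limit $Q$, the blocks of the degree partitions of $\gL$ and of $\G$ biject canonically with the blocks of $Q$, so a single indexing of the $Q$-blocks may be pushed forward to both quotients; this is the labeling convention underlying Notation~\ref{nota:subgroup}. Since $Q$ is $G$-invariant, $\tilde v$ and $g_w \tilde v$ share a $Q$-block, so $v$ and $w$ lie in $\gL$- and $\G$-blocks bearing the same index. Because the type (two-ended versus maximally hanging Fuchsian) is locally determined in $T$, the two vertices share a type, giving the stated equivalences $G_v \in T_i^G \Leftrightarrow H_w \in T_i^H$ and $G_v \in F_i^G \Leftrightarrow H_w \in F_i^H$. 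The main obstacle I anticipate is verifying that the labelings of Notation~\ref{nota:subgroup} can genuinely be chosen compatibly with the intrinsic partition $Q$ when the degree refinement matrix has nontrivial symmetries; this is resolved by the canonical bijection noted above, which lets one fix a single indexing of the $Q$-blocks and transport it to both $\gL$ and $\G$ at once.
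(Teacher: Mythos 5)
Your proposal is correct and follows essentially the same route as the paper's proof: both groups act on the common JSJ tree $T$ (the paper gets this from the boundary homeomorphism and Bowditch's canonical construction, you via Proposition~\ref{prop:sub_struc} and Bass--Serre theory), $H_w$ and $g_wG_vg_w^{-1}$ stabilize the same vertex of $T$, and block membership is read off from the degree partition of $T$. Your explicit induction identifying the pullbacks of the degree partitions of $\gL$ and $\G$ with the intrinsic valence-refinement partition of $T$, and your handling of the labeling convention of Notation~\ref{nota:subgroup}, simply spell out details the paper leaves implicit in the phrase ``the conclusion of the lemma follows.''
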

    \begin{proof}
     Since $H$ is a finite-index subgroup of $G$, the inclusion of $H$ in $G$ induces a homeomorphism from the visual boundary of $H$ to the visual boundary of $G$. Hence, $H$ and $G$ have isomorphic JSJ trees by the construction of the JSJ tree due to Bowditch. Denote this tree by $T$. The subgroup $H_w$ stabilizes the same vertex of $T$ as $g_wG_vg_w^{-1}$, and the conclusion of the lemma follows. 
    \end{proof}

  To study finite covers of a $2$-dimensional hyperbolic $P$-manifold $X$, one often considers the full pre-image of a singular curve on $X$ or a subsurface of $X$. Algebraically, this corresponds to considering the following subset of vertex groups. 

  \begin{defn}
    For $v \in V(\gL)$, let 
     \[\cH_v = \{ H_w \leq H, w \in V(\G) \, \,| \,\, \text{there exists } g_w \in G \text{ so that } H_w = H \cap g_wG_vg_w^{-1}\} .\]  Define $\cH_v'$ similarly. 
  \end{defn}

  We will make use of the following two elementary observations, which follow from the discussion in \cite[Section 3]{scottwall}. 
  
  \begin{lemma} \label{lemma:sum_chi}
   Suppose $H \leq G$ is a subgroup of index $d$. For each $v \in V(\Lambda)$,
    \[ \sum_{H_w \in \cH_v} [G_v : H_w] = d. \] 
  \end{lemma}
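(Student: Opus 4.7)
The plan is to work with the action of $G$ on its JSJ (Bass--Serre) tree $T$, identify $\cH_v$ with a set of double cosets, and then apply the standard double-coset index formula $[G:H] = \sum_{g} [G_v : G_v \cap g^{-1}Hg]$ summed over $H\backslash G / G_v$.

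First I would fix $v \in V(\gL)$ and choose a lift $\tilde v \in T$ of $v$, so that $G_v = \Stab_G(\tilde v)$. Proposition~\ref{prop:sub_struc} and the construction of the JSJ decomposition via Bass--Serre theory tell us that the elements $H_w \in \cH_v$ are in bijection with the $H$-orbits of $G$-translates of $\tilde v$. Since the $G$-orbit $G \cdot \tilde v$ is $G$-equivariantly identified with $G/G_v$, these $H$-orbits biject with the double-coset set $H \backslash G / G_v$: each double coset $H g_w G_v$ corresponds to an $H$-orbit through $g_w \tilde v$ whose point-stabilizer in $H$ is $H \cap g_w G_v g_w^{-1}$, matching the description of $H_w$ in the definition of $\cH_v$.

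Next, I would rewrite each index $[G_v : H_w]$ (interpreted as $[g_w G_v g_w^{-1} : H_w]$, since $H_w \le g_w G_v g_w^{-1}$) in the form $[G_v : G_v \cap g_w^{-1}Hg_w]$ by conjugating by $g_w^{-1}$. Using the elementary identity $|H \backslash H g G_v| = [G_v : G_v \cap g^{-1}Hg]$ (proved by checking when two elements $Hgk_1$ and $Hgk_2$ with $k_i \in G_v$ coincide) together with the disjoint decomposition $G = \bigsqcup_{g \in H \backslash G / G_v} HgG_v$, I obtain
\[
  [G : H] \;=\; |H \backslash G| \;=\; \sum_{g \in H \backslash G / G_v} |H \backslash H g G_v| \;=\; \sum_{g \in H \backslash G / G_v} [G_v : G_v \cap g^{-1}Hg],
\]
and under the bijection from the previous paragraph the right-hand sum is precisely $\sum_{H_w \in \cH_v} [G_v : H_w]$, giving the claimed equality with $d = [G:H]$.

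Once the correspondence between $\cH_v$ and $H \backslash G / G_v$ is set up, the remainder is purely formal, so I do not anticipate a genuine obstacle. The only subtle point to check is that two representatives of the same double coset produce subgroups $H_w$ that are $H$-conjugate (and therefore labelled by the same vertex $w \in V(\G)$), so that the sum over $\cH_v$ really matches the sum over $H\backslash G/G_v$ rather than over-counting. As a reassuring cross-check one can argue geometrically: the finite cover $Y \to X$ of degree $d$ supplied by Proposition~\ref{prop:sub_struc} has preimage of $X_v$ equal to the disjoint union of the vertex spaces $Y_w$ with $H_w \in \cH_v$, each covering $X_v$ with degree $[G_v : H_w]$, and the total sheet count over any point of $X_v$ is $d$.
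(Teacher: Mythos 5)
Your argument is correct. Note, though, that the paper does not actually write out a proof of this lemma: it is stated as an ``elementary observation'' that follows from the discussion of finite covers of graphs of spaces in Scott--Wall, i.e.\ exactly the sheet-counting picture you give as a ``cross-check'' at the end --- the preimage of the vertex space $X_v$ in the degree-$d$ cover $Y\to X$ is the disjoint union of the vertex spaces $Y_w$ with $H_w\in\cH_v$, each covering $X_v$ with degree $[G_v:H_w]$, and the degrees over any point sum to $d$. Your main line of reasoning is the purely algebraic shadow of that count: vertices $w\in V(\G)$ lying over $v$ correspond to $H$-orbits of lifts of $v$ in the Bass--Serre tree, hence to double cosets in $H\backslash G/G_v$, and the identity $[G:H]=\sum_{g\in H\backslash G/G_v}[G_v:G_v\cap g^{-1}Hg]$ gives the lemma once one interprets $[G_v:H_w]$ as $[g_wG_vg_w^{-1}:H_w]$, which you do. The subtle point you flag (different representatives of one double coset yield $H$-conjugate stabilizers, so the sum over $\cH_v$, indexed by $w\in V(\G)$, matches the sum over double cosets) is precisely what Bass--Serre theory, or equivalently the Scott--Wall covering-space description invoked by Proposition~\ref{prop:sub_struc}, guarantees. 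What your route buys is a self-contained group-theoretic proof that does not lean on the topological realization; what the paper's citation buys is brevity, since the graph-of-spaces covering picture is already in place and is used throughout Section~\ref{sec_comm_classes}.
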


  \begin{lemma} \label{lemma:adj_vert}
   If $w,w' \in V(\G)$ are adjacent, $H_w  = H \cap g_wG_vg_w^{-1}$, and $H_{w'} = H \cap g_{w'}G_{v'}g_{w'}^{-1}$ for some $g_w,g_{w'} \in G$ and $v, v' \in V(\gL)$, then $v$ and $v'$ are adjacent. 
  \end{lemma}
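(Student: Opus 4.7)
The plan is to work with the covering space picture underlying the Scott--Wall description of the JSJ decomposition of $H$. Recall from Proposition~\ref{prop:sub_struc} and its proof that $H$ is realized as $\pi_1(Y)$, where $Y$ is a graph of spaces that finitely covers the graph of spaces $X$ associated to the JSJ decomposition $\cG$ of $G$. The covering map $Y \to X$ is cellular with respect to the graph-of-spaces structures, so it induces a graph morphism $\pi: \G \to \gL$ on the underlying graphs of $\cH$ and $\cG$. Under this correspondence, for each $w \in V(\G)$ the vertex space $Y_w$ covers the vertex space $X_{\pi(w)}$, and (after choosing compatible basepoints) $H_w = \pi_1(Y_w)$ is conjugate into $G_{\pi(w)}$ by the element $g_w$; that is, $H_w = H \cap g_w G_{\pi(w)} g_w^{-1}$.

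Now suppose $w, w' \in V(\G)$ are adjacent, joined by an edge $e \in E(\G)$. By the graph-of-spaces structure of $Y$, the edge $e$ corresponds to an edge space $Y_e$ together with inclusions $Y_e \hookrightarrow Y_w$ and $Y_e \hookrightarrow Y_{w'}$ (after the identifications in the definition of a graph of spaces). The image $\pi(e)$ is an edge $\bar e \in E(\gL)$ whose endpoints are precisely $\pi(w)$ and $\pi(w')$; in particular, $\pi(w)$ and $\pi(w')$ are adjacent in $\gL$.

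It remains to verify that $\pi(w) = v$ and $\pi(w') = v'$ (up to the freedom provided by Proposition~\ref{prop:sub_struc}). Because the JSJ decomposition of a group in $\cC$ has pairwise non-conjugate vertex groups of distinct types at each vertex of $\gL$ (two-ended vs.\ maximally hanging Fuchsian, and conjugacy classes within these), the vertex $\pi(w) \in V(\gL)$ is uniquely determined by the conjugacy class in $G$ of the subgroup $g_w G_{\pi(w)} g_w^{-1}$ containing $H_w$ as a finite-index subgroup. Since by hypothesis $H_w$ is a finite-index subgroup of $g_w G_v g_w^{-1}$, this forces $\pi(w) = v$; similarly $\pi(w') = v'$. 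Combined with the previous paragraph, this shows that $v$ and $v'$ are adjacent in $\gL$.

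The main point requiring care is this last identification step: one must know that $v$ is determined by the data $(g_w, H_w)$, which is where we use that the vertex stabilizers in the JSJ tree are pairwise distinct as subgroups of $G$ (and hence the underlying vertex $v$ is recovered from the commensurability class of $H_w$ inside $G$, as in the proof of Proposition~\ref{prop:sub_struc} via the Bowditch boundary description). Once this is in place, the adjacency statement is immediate from the fact that the map $\pi: \G \to \gL$ is a graph morphism induced by a covering map of graphs of spaces.
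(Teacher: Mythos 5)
Your argument is correct and is essentially the paper's intended justification: the paper states this lemma without proof, citing the Scott--Wall covering-space discussion, and your write-up is exactly that argument --- an edge of $\G$ comes from an edge space of $Y$ whose image under the covering $Y \to X$ is an edge space of $X$, so adjacency descends through the induced graph morphism $\G \to \gL$. The one point needing care, identifying $\pi(w)$ with $v$, you handle correctly by appealing to the fact that a conjugate of a vertex group is recovered from the commensurability class of $H_w$ via the Bowditch boundary (limit sets of commensurable subgroups agree and vertex groups are the full stabilizers of their necklaces/jumps), which is the same mechanism the paper uses in the proofs of Proposition~\ref{prop:sub_struc} and Lemma~\ref{lemma:same_block}.
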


    \subsection{Quasi-isometric rigidity does not hold for the subclass of torsion-generated groups} 
    
   \begin{construction} \label{const:not_AC}

      \begin{figure}
      \begin{overpic}[scale=.8,tics=5]{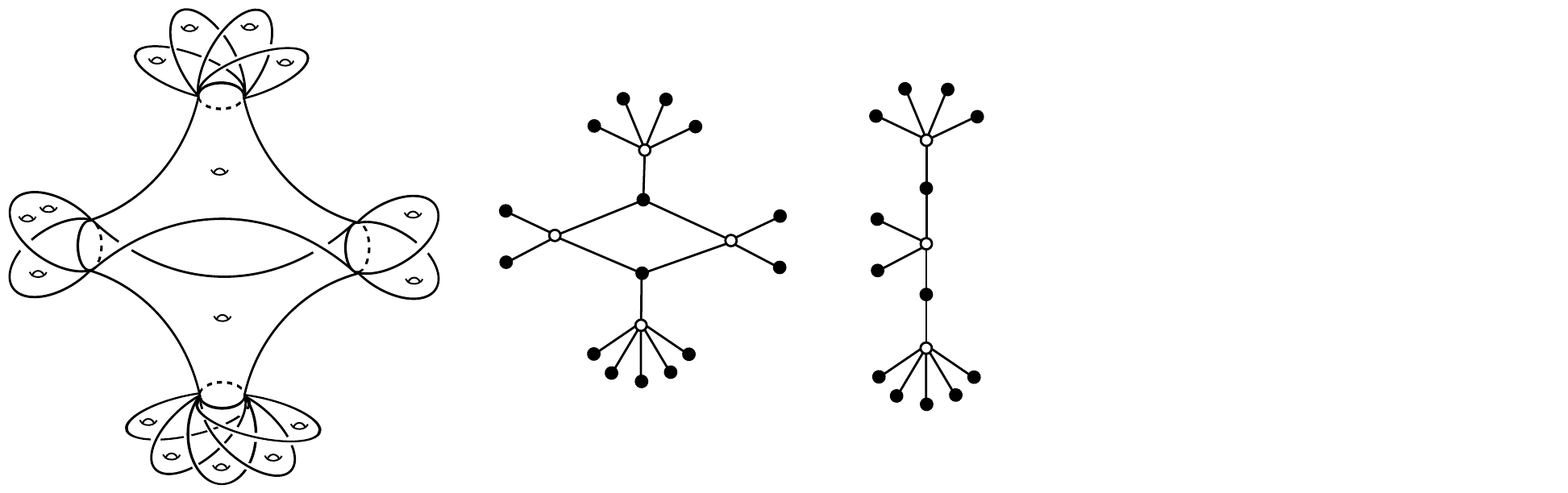}
	\put(68,15){\Small{$\left(\begin{array}{ccc|ccccc}
         0 & 0 & 0 & 1 & 1 & 2 & 0 & 0 \\
         0 & 0 & 0 & 1 & 0 & 0 & 4 & 0 \\
         0 & 0 & 0 & 0 & 1 & 0 & 0 & 5 \\
        \hline
        \infty & \infty & 0 & 0 & 0 & 0 & 0 & 0 \\
         \infty & 0 & \infty & 0 & 0 & 0 & 0 & 0 \\
         \infty & 0 & 0 & 0 & 0 & 0 & 0 & 0 \\
         0 & \infty & 0 & 0 & 0 & 0 & 0 & 0 \\
         0 & 0 & \infty & 0 & 0 & 0 & 0 & 0 \\
        \end{array}\right)$}}
        \put(71,26){\Small{$T_1$}}
        \put(75,26){\Small{$T_2$}}
        \put(79,26){\Small{$T_3$}}
        \put(82.5,26){\Small{$F_1$}}
        \put(85.7,26){\Small{$F_2$}}
        \put(89,26){\Small{$F_3$}}
        \put(92,26){\Small{$F_4$}}
        \put(95.2,26){\Small{$F_5$}}
        \put(66,23.7){\Small{$T_1$}}
        \put(66,21.2){\Small{$T_2$}}
        \put(66,18.5){\Small{$T_3$}}
        \put(66,16){\Small{$F_1$}}
        \put(66,13.5){\Small{$F_2$}}
        \put(66,11.2){\Small{$F_3$}}
        \put(66,8.8){\Small{$F_4$}}
        \put(66,6.5){\Small{$F_5$}}
        \put(40,1){$\Lambda$}
        \put(58.3,1){$\Omega$}
        \put(0,28){$X$}
        \put(1,10.5){\Small{$C_1$}}
        \put(1,20){\Small{$C_2$}}
        \put(25,10.5){\Small{$C_1'$}}
        \put(25,20){\Small{$C_2'$}}
        \put(-1.5,15){\Small{$S_v$}}
        \put(28,15){\Small{$S_v'$}}
        \put(11.5,18.5){\Small{$A$}}
        \put(11.5,11){\Small{$B$}}
        \put(6,27){\Small{$D_1$}}
        \put(8,29.5){\Small{$D_2$}}
        \put(18,29.5){\Small{$D_3$}}
        \put(20,27){\Small{$D_4$}}
        \put(6,5){\Small{$E_1$}}
        \put(7,1.5){\Small{$E_2$}}
        \put(12,-1){\Small{$E_3$}}
        \put(19.3,1.5){\Small{$E_4$}}
        \put(20.3,5){\Small{$E_5$}}
        \put(5.5,18){\Small{$\alpha$}}
        \put(22,18){\Small{$\beta$}}
        \put(11,24){\Small{$\gamma$}}
        \put(11,6.5){\Small{$\delta$}}
        \put(34,17.5){\Small{$v_{\alpha}$}}
        \put(46,17.5){\Small{$v_{\beta}$}}
        \put(41.8,21){\Small{$v_{\gamma}$}}
        \put(41.5,11){\Small{$v_{\delta}$}}
        \put(38,19){\Small{$v_A$}}
        \put(38,13){\Small{$v_B$}}
        \put(31,13){\Small{$v_{C_1}$}}
        \put(31,19.2){\Small{$v_{C_2}$}}
        \put(49,12.8){\Small{$v_{C_1'}$}}
        \put(49,19.2){\Small{$v_{C_2'}$}}
        \put(34,23.5){\Small{$v_{D_1}$}}
        \put(37.3,26.5){\Small{$v_{D_2}$}}
        \put(41.7,26.5){\Small{$v_{D_3}$}}
        \put(45,23.5){\Small{$v_{D_4}$}}
        \put(34.5,8.7){\Small{$v_{E_1}$}}
        \put(36,6.2){\Small{$v_{E_2}$}}
        \put(39.5,5){\Small{$v_{E_3}$}}
        \put(42.7,6){\Small{$v_{E_4}$}}
        \put(45,8.5){\Small{$v_{E_5}$}}
      \end{overpic}
	\caption{{\small The group $\pi_1(X)$ has JSJ decomposition graph $\Lambda$ and degree refinement the matrix shown. The group $\pi_1(X)$ is quasi-isometric to a right-angled Coxeter group with JSJ graph $\Omega$, but $\pi_1(X)$ is not abstractly commensurable to any group generated by finite-order elements as shown in Theorem~\ref{thm:qi_notAC}.}}
      \label{figure:not_AC_tree}
     \end{figure}
     
     Let $G \cong \pi_1(X) \in \cC$ be the fundamental group of the $2$-dimensional hyperbolic $P$-manifold $X$ shown in Figure~\ref{figure:not_AC_tree}. 
    The group $G$ has JSJ decomposition with JSJ graph $\Lambda$. The vertices in Figure~\ref{figure:not_AC_tree} are labeled so that $G_{v_K} \cong \pi_1(K)$, where $K$ is a connected subsurface or branching curve in the space $X$. 
    The JSJ graph for $G$ has the following degree partition.
    \begin{center}
     $\begin{array}{ccccc}
     T_1(G) = \{v_{\alpha}, v_{\beta}\} & \quad \quad & F_1(G) = \{v_A\} & \quad \quad & F_4(G) = \{v_{D_1}, \ldots, v_{D_4}\} \\
     T_2(G) = \{v_{\gamma}\} & \quad \quad & F_2(G) = \{v_B\}& \quad \quad & F_5(G) = \{v_{E_1}, \ldots, v_{E_5}\}  \\
     T_3(G) = \{v_{\delta} \} & \quad \quad & F_3(G) = \{v_{C_1}, v_{C_2}, v_{C_1'}, v_{C_2'}\}  & \quad \quad & \\
     \end{array}$
    \end{center}
    The degree refinement for $G$ is the matrix in Figure~\ref{figure:not_AC_tree}.
   \end{construction}
   
   \begin{thm} \label{thm:qi_notAC}
    Let $G \cong \pi_1(X) \in \cC$ as given in Construction~\ref{const:not_AC}. The following hold.
      \begin{enumerate}
       \item The group $G$ is quasi-isometric to a right-angled Coxeter group. 
       \item The group $G$ is not abstractly commensurable to any group with JSJ graph a tree. In particular, $G$ is not abstractly commensurable to any group generated by finite-order elements.
      \end{enumerate}
   \end{thm}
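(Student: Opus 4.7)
My plan is to verify Part~(1) by checking conditions (M1) and (M2) of Theorem~\ref{thm_qi_char_later_sec} directly from the degree-refinement matrix $M$ in Figure~\ref{figure:not_AC_tree}, and to prove Part~(2) by contradiction using the Leighton--Malone uniqueness of the tree with a prescribed degree refinement together with the block Euler characteristic invariant of Proposition~\ref{prop_block_comm}. The crux of (2) will be a parity asymmetry: every $F$-block vertex of $\Lambda$ has odd valence, but in the unique tree with the same degree refinement two $F$-block valences become even, forcing two expressions for a single commensurability ratio to be incompatible in lowest terms.

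For Part~(1), reading the graph of blocks $\Gamma_{\cB}$ off $M$ yields seven edges $\{t_i,f_j\}$ on eight vertices, and inspection shows $\Gamma_{\cB}$ is a tree, so (M1) holds. The multi-edges of $\Gamma_0$ are exactly $\{t_1,f_3\}$, $\{t_2,f_4\}$, $\{t_3,f_5\}$, each involving an $f$-vertex that is a leaf of $\Gamma_{\cB}$. Any embedded alternating $t$-to-$t$ path in $\Gamma_0$ has $t$-endpoints, and each interior $f$-vertex would need two distinct neighbors on the path; a leaf cannot play either role, so no such path crosses a multi-edge and (M2) is vacuous. Theorem~\ref{thm_qi_char_later_sec} then yields~(1).

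For Part~(2), suppose for contradiction that $G$ is abstractly commensurable to some $G_0 \in \cC$ with JSJ graph $\Lambda_0$ a tree, and let $H \leq G$ and $H' \leq G_0$ be finite-index subgroups with $H \cong H'$, $[G:H]=d$, $[G_0:H']=d'$. Since $G$ and $G_0$ are quasi-isometric (Theorem~\ref{thm:qi_same_deg_ref}), they share the degree refinement $M$, and by Leighton--Malone uniqueness the tree $\Lambda_0$ is the one produced by Construction~\ref{const:new_graph}: unwrapping the three multi-edges of $\Gamma_0$ yields $\Lambda_0$ with block sizes $(1,1,1,1,1,2,4,5)$ and $F$-block valences $(2,2,1,1,1)$, as opposed to $G$'s $F$-block valences $(3,3,1,1,1)$. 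So every $G_0$-vertex group in $F_1(G_0)$ or $F_2(G_0)$ carries two peripheral subgroups rather than three.

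Proposition~\ref{prop_block_comm} now gives $d\,\vec{v}_G = d'\,\vec{v}_{G_0}$ for the block Euler characteristic vectors. Because $G$ is a geometric amalgam of free groups, each vertex group is a surface group with $\chi = 2-2g-k \equiv k \pmod{2}$, so $\vec{v}_G$ has parity signature $(\mathrm{odd},\mathrm{odd},\mathrm{even},\mathrm{even},\mathrm{odd})$; a parallel computation for a surface realization of $\Lambda_0$ yields signature $(\mathrm{even},\mathrm{even},\mathrm{even},\mathrm{even},\mathrm{odd})$. Coordinate one then forces the reduced fraction $d'/d$ to have an even numerator (ratio ``even over odd''), while coordinate five forces it to have an odd numerator (ratio ``odd over odd''), contradicting the uniqueness of reduced form. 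The main obstacle I expect is removing the geometric-amalgam assumption on $G_0$---its MHF vertex groups may have torsion and rational orbifold Euler characteristics---so the plan is to shift all parity bookkeeping onto the common torsion-free cover $H$, which is itself a geometric amalgam, and to use Lemmas~\ref{lemma:same_block} and~\ref{lemma:sum_chi} to show that the valence-$2$ structure of $F_1(G_0), F_2(G_0)$ forces the corresponding entries of $\vec{v}_H$ to be even while $F_5$'s valence-$1$ structure keeps the fifth entry odd, reproducing the contradiction. The final clause of Part~(2) then follows because torsion-generated groups always have tree JSJ graphs.
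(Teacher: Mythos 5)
Your verification of Part (1) is fine: checking (M1) and (M2) directly from the matrix $M$ and invoking Theorem~\ref{thm_qi_char_later_sec} is a legitimate (and slightly different) route from the paper, which instead exhibits the tree $\Omega$ with the same degree refinement and applies Theorem~\ref{thm:qi_same_deg_ref}. The problem is Part (2). Your first step invokes ``Leighton--Malone uniqueness'' to conclude that the JSJ graph $\Lambda_0$ must be the particular tree produced by Construction~\ref{const:new_graph}, with block sizes $(1,1,1,1,1,2,4,5)$ and $F$-block valences $(2,2,1,1,1)$. But that uniqueness statement concerns the (infinite) tree whose degree refinement is $M$, i.e.\ the Bass--Serre/JSJ tree, not the finite JSJ graph. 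Since the $F$-rows of the degree refinement contain only $\infty$ and $0$, the matrix constrains an MHF vertex only through \emph{which} blocks it meets, not through its valence, so many pairwise non-isomorphic finite trees realize $M$. For example, take two vertices in each of $T_1,T_2,T_3$, a single $F_1$-vertex adjacent to all four vertices of $T_1\cup T_2$, two $F_2$-vertices each joining one $T_1$-vertex to one $T_3$-vertex, and the required $F_3,F_4,F_5$ leaves: this is a connected tree with degree refinement $M$ whose $F$-block valences are $(4,2,1,\dots)$, so the parity signature you ascribe to $v_{G_0}$ is not forced.

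The deeper gap is that no version of this plan can work, because Proposition~\ref{prop_block_comm} is too coarse for this theorem. For a geometric amalgam the parity of a block sum $\chi_j$ is simply the parity of the total number of edges entering $F_j$ (as $\chi=2-2g-k$), and that number depends on the tree realization; moreover the genera are free parameters. In the tree just described one can choose genera so that the block Euler characteristic vector equals $q\,v_G$ for a suitably large even integer $q$, which is commensurable to $v_G$. So there exist groups in $\cC$ with JSJ graph a tree, quasi-isometric to $G$, whose block vector is commensurable to $G$'s, and no parity bookkeeping via $\vec v_H$, Lemma~\ref{lemma:same_block}, or Lemma~\ref{lemma:sum_chi} can produce your reduced-fraction contradiction. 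What actually distinguishes $G$ is a finer, within-block asymmetry: the closed surfaces $S_v=C_1\cup C_2$ and $S_v'=C_1'\cup C_2'$ satisfy $\chi(C_1)/\chi(C_2)\neq\chi(C_1')/\chi(C_2')$. The paper passes to a common finite cover, applies Lafont's topological rigidity to get a homeomorphism of the covering $P$-manifolds, uses the ratio asymmetry to show the images of (the preimages of) $S_v$ and $S_v'$ remain disjoint, and then pushes the cycle $\alpha, A, \beta, B$ down to produce a cycle in the JSJ graph of the putative tree-graph group, a contradiction. Your argument would need to be replaced by, or supplemented with, an invariant of this finer type rather than the block Euler characteristic vector.
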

   
   We first prove (1) in the next lemma.
   
   \begin{lemma} \label{lemma:G_qi_racg}
    The group $G$ is quasi-isometric to a right-angled Coxeter group.
   \end{lemma}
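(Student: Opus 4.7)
The plan is to verify that $G$ satisfies conditions (M1) and (M2) of Theorem~\ref{thm_qi_char_later_sec}, and then invoke the implication $(4) \Rightarrow (1)$ of that theorem. Since all the combinatorial data for $G$ is encoded in the degree refinement matrix $M$ displayed in Figure~\ref{figure:not_AC_tree}, the proof should be purely a matrix-inspection argument once the graph of blocks $\G_{\cB}$ and augmented graph of blocks $\G_0$ of $G$ are read off.

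First I would write down $\G_{\cB}$. Its vertices are $\{t_1,t_2,t_3,f_1,f_2,f_3,f_4,f_5\}$, and the entries $n_{ij}$ in the upper-right block of $M$ give an edge $\{t_i,f_j\}$ in $\G_{\cB}$ whenever $n_{ij}>0$. Reading the matrix yields the edges $\{t_1,f_1\},\{t_1,f_2\},\{t_1,f_3\},\{t_2,f_1\},\{t_2,f_4\},\{t_3,f_2\},\{t_3,f_5\}$, a connected graph with $8$ vertices and $7$ edges. Direct inspection (for example rooting at $t_1$) shows it has no cycles, so it is a tree. This verifies condition (M1).

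Next I would check (M2). The only entries of $M$ with $n_{ij}>1$ are $n_{13}=2$, $n_{24}=4$ and $n_{35}=5$. In each case the corresponding $f$-vertex $f_3$, $f_4$, or $f_5$ appears in exactly one column pairing with a Type I vertex, hence is a leaf of $\G_{\cB}$. Consequently, any embedded path $t_{i_1},f_{j_1},t_{i_2},f_{j_2},\dots$ in $\G_0$ whose first edge is a multi-edge must terminate at its first $f$-vertex, so in the renumbered notation of Definition~\ref{defn:no2cycles} we have $k=1$ and there are no further $n_{ij}$ to check. Thus the hypothesis of Definition~\ref{defn:no2cycles} is satisfied vacuously, so (M2) holds.

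Having established (M1) and (M2), Theorem~\ref{thm_qi_char_later_sec} immediately yields that $G$ is quasi-isometric to a right-angled Coxeter group. (Explicitly, Construction~\ref{const:new_graph} unwraps $\G_0$ into the finite tree $\Omega$ of Figure~\ref{figure:not_AC_tree}, which has the same degree refinement as $G$; Theorem~\ref{thm:qi_same_deg_ref} then shows that any $G'\in\cC$ with JSJ graph $\Omega$ is quasi-isometric to $G$, and \cite[Theorem~1.16]{danistarkthomas} provides a right-angled Coxeter group quasi-isometric to $G'$.) There is no real obstacle here; the entire argument is bookkeeping on the degree refinement matrix, and the only point worth spelling out carefully is that the three multi-edges of $\G_0$ all hang off leaves of $\G_{\cB}$, which is what trivializes (M2).
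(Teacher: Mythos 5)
Your proposal is correct: the degree refinement matrix is read off accurately (the upper-right block gives exactly the seven edges you list, so $\G_{\cB}$ is a connected graph on $8$ vertices with $7$ edges, hence a tree), and your observation that the only multi-edges $n_{13}=2$, $n_{24}=4$, $n_{35}=5$ all terminate at leaves $f_3,f_4,f_5$ of $\G_{\cB}$ does make Condition (M2) hold vacuously, since no embedded path of the form $t_1,f_1,\dots,t_k$ with $k\geq 2$ can have a multi-edge as its first edge. Invoking the implication $(4)\Rightarrow(1)$ of Theorem~\ref{thm_qi_char_later_sec} (proved earlier, so there is no circularity) then finishes the argument. The paper takes a shorter route that skips the (M1)/(M2) verification entirely: it simply exhibits a geometric amalgam of free groups $G'$ whose JSJ graph is the tree $\Omega$ drawn in Figure~\ref{figure:not_AC_tree} and whose degree refinement equals that of $G$, then applies \cite[Theorem~1.16]{danistarkthomas} to $G'$ and Theorem~\ref{thm:qi_same_deg_ref} to conclude $G$ and $G'$ are quasi-isometric. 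Unpacked, your argument runs through the same two ingredients (Theorem~\ref{thm:qi_same_deg_ref} and \cite[Theorem~1.16]{danistarkthomas}), with Construction~\ref{const:new_graph} producing the tree instead of it being read off the figure; what your version buys is an explicit check that this example is consistent with the general characterization, at the cost of a little more bookkeeping. Two small remarks: your phrase ``$k=1$'' is slightly off (the point is that no admissible path of length at least two begins with a multi-edge, so the hypothesis $n_{11}>1$ is never satisfied), and your parenthetical claim that Construction~\ref{const:new_graph} outputs precisely the tree $\Omega$ of the figure is plausible but not needed for your argument, so nothing hinges on it.
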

    \begin{proof}
     Let $G'$ be a geometric amalgam of free groups with JSJ graph $\Omega$ shown in Figure~\ref{figure:not_AC_tree}, where the vertex groups associated to the white vertices are infinite cyclic and the vertex groups associated to the black vertices are maximally hanging Fuchsian. The graph $\Omega$ is a tree; so, by \cite[Theorem 1.16]{danistarkthomas}, $G'$ is quasi-isometric to a right-angled Coxeter group. The group $G'$ also has degree refinement the matrix shown. Therefore, by Theorem~\ref{thm:qi_same_deg_ref}, the groups $G$ and $G'$ are quasi-isometric. 
    \end{proof}

   \begin{outline}
    We outline the proof of Theorem~\ref{thm:qi_notAC}(2) in the case of a simplifying assumption. Suppose towards a contradiction that $G$ is abstractly commensurable to a group $G'$ with JSJ graph a tree and so that $G' \cong \pi_1(X')$ where $X'$ is a $2$-dimensional hyperbolic $P$-manifold.  In this setting, by \cite[Theorem 1.2]{lafont}, there exist finite covers $p:Y \rightarrow X$ and $p':Y' \rightarrow X'$, where $Y$ and $Y'$ are $2$-dimensional hyperbolic $P$-manifolds, and there exists a homeomorphism $f:Y \rightarrow Y'$ inducing an isomorphism between finite-index subgroups of $G$ and $G'$. 
    
    The vertices $\{v_{\alpha}, v_A, v_{\beta}, v_B\}$ form a cycle in the JSJ graph $\Lambda$ for $G$. The full preimage of $A \cup B \cup \alpha \cup \beta$ in the cover $Y \rightarrow X$ yeilds (not necessarily disjoint) cycles in the JSJ graph for $\pi_1(Y)$ and hence in the JSJ graph for $\pi_1(Y') \cong \pi_1(Y)$. We show these cycles cannot project to a tree in the JSJ graph for $\pi_1(X')$. The first step is to show that $p'(f(p^{-1}(S_v))) \cap p'(f(p^{-1}(S_v'))) = \emptyset$, where $S_v$ and $S_v'$ are the closed surfaces in $X$ labeled in Figure~\ref{figure:not_AC_tree}. This claim holds since the vertex groups in $F_3^G$ are exactly $\{\pi_1(C_1), \pi_1(C_2), \pi_1(C_1'), \pi_1(C_2')\}$ and the ratio of the Euler characteristic of the subsurfaces in $S_v$ is different from the ratio of the Euler characteristic of the subsurfaces of $S_v'$. Consequently, $p'(f(p^{-1}(\alpha))) \cap p'(f(p^{-1}(\beta))) = \emptyset$. Since $\pi_1(A) \in F_1^G$ and $\pi_1(B) \in F_2^G$, Lemma~\ref{lemma:same_block} implies $p'(f(p^{-1}(A))) \cap p'(f(p^{-1}(B))) = \emptyset$. Each subsurface or curve in $X'$ in $\{p'(f(p^{-1} A ))),p'(f(p^{-1} B))),p'(f(p^{-1} \alpha))),p'(f(p^{-1}\beta )))\}$ corresponds to a vertex in a set $V'$ contained in the JSJ graph for $\pi_1(X')$. The vertices in $V'$ are adjacent to at least two other vertices in $V'$ by the above arguments, and this yeilds a cycle in the JSJ graph, a contradiction. The proof in full generality given below translates these topological ideas to the algebraic setting.
   \end{outline}

    \begin{proof}[Proof of Theorem~\ref{thm:qi_notAC}]
     The proof of (1) is given in Lemma~\ref{lemma:G_qi_racg}.
     To prove (2), suppose towards a contradiction that $G$ is abstractly commensurable to a group $G'$ with JSJ graph a tree $\Lambda'$. Suppose $H \leq G$ and $H' \leq G'$ are finite-index subgroups which are isomorphic. Since $G$ is a geometric amalgam of free groups, the groups $H$ and $H'$ are geometric amalgams of free groups. Hence, $H \cong \pi_1(Y)$ and $H' \cong \pi_1(Y')$ where $Y$ and $Y'$ are $2$-dimensional hyperbolic $P$-manifolds. By \cite[Theorem 1.2]{lafont} there exists a homeomorphism $f: Y \rightarrow Y'$ inducing an isomorphism $\Phi:H \rightarrow H'$. Suppose $p:Y \rightarrow X$ is a finite covering map. 
     
     Let $\cS$ be the full pre-image of $S_v$ in $Y$, and let $\cS'$ be the full pre-image of $S_v'$ in $Y$, where $S_v = C_1 \cup C_2$ and $S_v'=C_1' \cup C_2'$ are the closed surfaces in $X$ labeled in Figure~\ref{figure:not_AC_tree}. The spaces $\cS$ and $\cS'$ are each a disjoint collection of connected closed surfaces and $\cS \cap \cS' = \emptyset$ since $S_v \cap S_v' = \emptyset$.     
     The lifts of the branching curve $\alpha$ on $X$ partition each surface $\Sigma$ in $\cS$ as $\Sigma = Z_1 \cup Z_2$ so that $p(Z_i) = C_i$. Similarly, the lifts of the branching curve $\beta$ on $X$ partition each surface $\Sigma'$ in $\cS'$ as $\Sigma' = Z_1' \cup Z_2'$ where $p(Z_i') =C_i'$.      
     Each branching curve in $Y$ that intersects $\Sigma$ is incident to exactly one subsurface in each of $Z_1$ and $Z_2$. Hence, $Z_1$ covers $C_1$ by the same degree that $Z_2$ covers $C_2$. A similar argument holds for $\Sigma'$.
     Moreover, since $f: Y \rightarrow Y'$ is a homeomorphism, 
     \begin{eqnarray} \label{eqn:ratios}
      \frac{\chi(f(Z_1))}{\chi(f(Z_2))} = \frac{\chi(Z_1)}{\chi(Z_2)} =\frac{\chi(C_1)}{\chi(C_2)} \quad\quad &\text{ and }& \quad \quad \frac{\chi(f(Z_1'))}{\chi(f(Z_2'))} = \frac{\chi(Z_1')}{\chi(Z_2')} = \frac{\chi(C_1')}{\chi(C_2')}.
     \end{eqnarray}
     
     Let $H'_{\Sigma} \cong \pi_1(f(\Sigma)) \leq H'$ and $H'_{\Sigma'} \cong \pi_1(f(\Sigma')) \leq H'$. We set notation in this paragraph. The subgroup 
     $H'_{\Sigma}$ is generated by a union of vertex groups in $T_1^{H'}$ and $F_3^{H'}$ in the JSJ decomposition of $H'$ by Lemma~\ref{lemma:same_block}. Suppose $H'_{\Sigma} = \la H'_{w_1}, \ldots, H'_{w_r}, H'_{x_1}, \ldots, H'_{x_s} \ra$, where $w_i \in F_3(H')$ for $i \in \{1, \ldots, r\}$ and $x_j \in T_1(H')$ for $j \in \{1, \ldots, s\}$. 
     Similarly, $H'_{\Sigma'} = \la H'_{w_1'}, \ldots, H'_{w_{r'}'}, H'_{x_1'}, \ldots, H'_{x_{s'}'} \ra$, where $w_i' \in F_3(H')$ for $i \in \{1, \ldots, r'\}$ and $x_j' \in T_1(H')$ for $j \in \{1, \ldots, s'\}$. 
     Assume that the fundamental group of every branching curve in $Y'$ that intersects $f(\Sigma)$ is included in the set $\{H'_{x_j}\}_{j=1}^s$, and assume that the fundamental group of every branching curve in $Y'$ that intersects $f(\Sigma')$ is included in the set $\{H'_{x_j'}\}_{j=1}^{s'}$. That is, the generating sets above are not minimal since these branching curves are boundary curves of surfaces whose fundamental groups are contained in the sets $\{H_{w_i}'\}_{i=1}^r$ and $\{H_{w_i'}'\}_{i=1}^{r'}$.      
     By Proposition~\ref{prop:sub_struc} and Lemma~\ref{lemma:same_block}, there are vertices $v_i, v_i' \in F_3(G')$ and $y_j,y_j' \in T_1(G')$ and elements $g_{w_i},g_{w_i'},g_{x_j},g_{x_j'} \in G'$ so that 
     \begin{eqnarray*}
      H'_{w_i} = H' \cap g_{w_i}G_{v_i}'g_{w_i}^{-1} &\quad \quad \quad&
       H'_{x_j} = H' \cap g_{x_j}G_{y_j}'g_{x_j}^{-1}\\
       H'_{w_i'} = H' \cap g_{w_i'}G_{v_i'}'g_{w_i'}^{-1} &\quad \quad \quad&
       H'_{x_j'} = H' \cap g_{x_j'}G_{y_j'}'g_{x_j'}^{-1}.
     \end{eqnarray*}     
     The surfaces $f(\Sigma)$ and $f(\Sigma')$ are disjoint in the space $Y'$, so \[\{w_1, \ldots, w_r, x_1, \ldots, x_s\} \cap \{w_1', \ldots, w'_{r'}, x_1', \ldots, x'_{s'} \} = \emptyset.\]
     
     \noindent {\it Claim:} $\{v_1, \ldots, v_r, y_1, \ldots, y_s\} \cap \{v_1', \ldots, v'_{r'}, y_1', \ldots, y'_{s'}\} = \emptyset$.
     \begin{proof}[Proof of Claim.]    
     Since the fundamental group of every branching curve in $Y'$ that intersects $f(\Sigma)$ or $f(\Sigma')$ is included in the set $\{H_{x_j}\}_{j=1}^s \cup \{H_{x_{j'}}\}_{j=1}^{s'}$, if $v_i = v_{\ell}'$ for some $i, \ell$, then $y_j = y_k'$ for some $j,k$. 
     Suppose towards a contradiction $y_j = y_k'$ for some $j \in \{1, \ldots, s\}$ and $k \in \{1, \ldots, s'\}$. 
     
     We first show recursively this assumption implies $\{v_1, \ldots, v_r, y_1, \ldots, y_s\} = \{v_1', \ldots, v'_{r'}, y_1', \ldots, y'_{s'}\}$.   
     Each vertex $x_j \in \{x_i\}_{i=1}^s$ is adjacent to exactly two vertices $w,w' \in \{w_i\}_{i=1}^r$.
     By the structure of the degree refinement for $H'$, each vertex $x_j \in \{x_i\}_{i=1}^s$ is not adjacent to any other vertices in $F_3(H')$. An analogous statement holds for the vertices $\{x_j'\}_{j=1}^{s'}$ and $\{w_i'\}_{i=1}^{r'}$. Since the degree refinement for $G'$ is the same as the degree refinement for $H'$, an analogous statement also holds for the pairs $(\{v_i\}_{i=1}^r, \{y_j\}_{j=1}^s)$ and $(\{v_i'\}_{i=1}^{r'}, \{y_j'\}_{j=1}^{s'})$. 
     Therefore, if $y_j = y_k'$, then the two vertices $v, v' \in F_3(G')$ incident to $y_j = y_k'$ are in the sets $\{v_i\}_{i=1}^r$ and $\{v_i'\}_{i=1}^{r'}$. 
     Since $f(\Sigma)$ and $f(\Sigma')$ are closed surfaces, for each vertex $w_i$ and $w_i'$ with $i \in \{1, \ldots, r\}$ and $i' \in \{1, \ldots, r'\}$, each vertex in $T_1(H')$ incident to $w_i$ is in the set $\{x_j\}_{j=1}^s$ and each vertex in $T_1(H')$ incident to $w_i'$ is in the set $\{x_j'\}_{j=1}^{s'}$. Thus, an analogous statement holds for the pairs $(\{v_i\}, \{y_i\})$ and $(\{v_i'\}, \{y_i'\})$. So, each vertex in $T_1(G')$ incident to either $v$ or $v'$ is in the sets $\{y_i\}_{i=1}^r$ and $\{y_i'\}_{i=1}^{r'}$. The above argument can then be applied to these vertices incident to $v$ and $v'$. Each pair of vertex sets $(\{w_i\}, \{x_j\})$, $(\{w_i'\}, \{x_j'\})$, $(\{v_i\}, \{y_j\})$, and $(\{v_i'\}, \{y_j'\})$ spans a connected subgraph in either $\Gamma'$ or $\Lambda'$. Therefore, these arguments can be repeated to conclude $\{v_1, \ldots, v_r, y_1, \ldots, y_s\} = \{v_1', \ldots, v'_{r'}, y_1', \ldots, y'_{s'}\}$. 
     
     The partitions of the surfaces $f(\Sigma) = f(Z_1) \cup f(Z_2)$ and $f(\Sigma') = f(Z_1') \cup f(Z_2')$ yield partitions of the vertices  $\{w_i\}_{i=1}^r = \cZ_1 \sqcup \cZ_2$ and $\{w_i'\}_{i=1}^{r'} = \cZ_1' \sqcup \cZ_2'$. Hence, there are partitions $\{v_i\}_{i=1}^r = \cV_1 \sqcup \cV_2$ and $\{v_i'\}_{i=1}^{r'} = \cV_1' \sqcup \cV_2'$, where $v_i \in \cV_j$ if $H_{w_i}$ is the fundamental group of a subsurface in $f(Z_j)$, and likewise for $v_i'$. Each vertex $x_j \in \{x_k\}_{k=1}^s$ is adjacent to exactly one vertex in $\cZ_1$ and exactly one vertex in $\cZ_2$, and an analogous statement holds for $x_j'\in \{x_k'\}_{k=1}^{s'}$. Hence, each vertex $y_j$ for $j \in \{1, \ldots, s\}$ is adjacent to exactly one vertex in each of $\cV_1$ and $\cV_2$, and the same holds for $y_j'$ for $j \in \{1, \ldots, s'\}$. Therefore, by the conclusion of the previous paragraph, either $\cV_1 = \cV_1'$ and $\cV_2 = \cV_2'$ or $\cV_1 = \cV_2'$ and $\cV_2 = \cV_1'$. Assume $\cV_1 = \cV_1'$ and $\cV_2 = \cV_2'$; the other case is similar. 
    
     Since each $y_j$ with $j \in \{1, \ldots, s\}$ is incident to exactly one vertex in each of $\cV_1$ and $\cV_2$, by the description of finite-index subgroups in \cite[Section 3]{scottwall} and Lemma~\ref{lemma:chi_mult}, there exists $d \in \N$ so that \[d\cdot \sum_{v \in \cV_1} \chi(G_v') = \chi(f(Z_1)) \quad \text{ and } \quad d\cdot\sum_{v \in \cV_2} \chi(G_v') = \chi(f(Z_2)). \] Similarly, there exists $d' \in \N$ so that \[d'\cdot\sum_{v \in \cV_1'} \chi(G_v') = \chi(f(Z_1')) \quad \text{ and } \quad d'\cdot\sum_{v \in \cV_2'} \chi(G_v') = \chi(f(Z_2')). \]      
      Therefore, since $\cV_1 = \cV_1'$ and $\cV_2 = \cV_2'$,
      \[\frac{\chi(f(Z_1))}{\chi(f(Z_2))} = \frac{\sum_{v \in \cV_1} \chi(G_v')}{\sum_{v \in \cV_2} \chi(G_v')} = \frac{\sum_{v \in \cV_1'} \chi(G_v')}{\sum_{v \in \cV_2'} \chi(G_v')} = \frac{\chi(f(Z_1'))}{\chi(f(Z_2'))}.\]
      So, by Equation~\ref{eqn:ratios}, $\frac{\chi(C_1)}{\chi(C_2)} = \frac{\chi(C_1')}{\chi(C_2')}$, a contradiction. Therefore, \[\{v_1, \ldots, v_r, y_1, \ldots, y_s\} \cap \{v_1', \ldots, v'_{r'}, y_1', \ldots, y'_{s'}\} = \emptyset.\]      
     \end{proof}
     
     To conclude the proof of the theorem, let $\cH_{\alpha}' \subset V(\Gamma')$ and $\cH_{\beta}' \subset V(\Gamma')$ be the sets of vertices in $\Gamma'$ whose vertex groups are the fundamental group of a component of $f(p^{-1}(\alpha))$ and $f(p^{-1}(\beta))$, respectively, where $\alpha$ and $\beta$ are the singular curves in $X$ labeled in Figure~\ref{figure:not_AC_tree}. Let $\cG_{\alpha}' \subset \Lambda'$ and $\cG_{\beta}' \subset \Lambda'$ be the set of vertices of $\Lambda'$ whose vertex groups contain, as a finite-index subgroup, $H'_w$ for $w \in \cH_{\alpha}'$ and $w \in \cH_{\beta}'$, respectively. The above arguments imply $\cG_{\alpha}' \cap \cG_{\beta}' = \emptyset$. Indeed, every lift of $\alpha$ in $Y$ is contained in some surface $\Sigma$ in $\cS$, and every lift of $\beta$ in $Y$ is contained in some surface $\Sigma'$ in $\cS'$. Let $\cH_A' \subset V(\Gamma')$ and $\cH_B' \subset V(\Gamma')$ be the set of vertices in $\Gamma'$ whose vertex groups are the fundamental group of a component of $f(p^{-1}(A))$ and $f(p^{-1}(B))$, respectively, where $A$ and $B$ are the subsurfaces in $X$ shown in Figure~\ref{figure:not_AC_tree}. Let $\cG_A' \subset \Lambda'$ and $\cG_B' \subset \Lambda'$ be the set of vertices in $\Lambda'$ whose vertex groups contain, as a finite-index subgroup $H_w'$ for $w \in \cH_A'$ and $w \in \cH_B'$, respectively. Since $\pi_1(A) \in F_1^G$ and $\pi_1(B) \in F_2^G$, by Lemma~\ref{lemma:same_block}, $\cG_A' \cap \cG_B' = \emptyset$. Every vertex in $\cH_A'$ is adjacent to a vertex in $\cH_{\alpha}'$ and a vertex in $\cH_{\beta}'$ and vice-versa. Similarly, every vertex in $\cH_B'$ is adjacent to a vertex in $\cH_{\alpha}'$ and a vertex in $\cH_{\beta}'$ and vice-versa. Therefore, by Lemma~\ref{lemma:adj_vert}, every vertex in $\cG_A'$ and $\cG_B'$ is adjacent to a vertex in $\cG_{\alpha}'$ and a vertex in $\cG_{\beta}'$ and vice-versa. So, there is a cycle in $\Lambda'$, a contradiction. Therefore, $G$ is not abstractly commensurable to any group with JSJ graph a tree.  
     \end{proof}

  \section{Necessary conditions for commensurability} \label{sec:comm_cond}

  \subsection{Block Euler characteristic vector}

  \begin{defn} 
    Let $G \in \cC$, and let $\Lambda$ be the JSJ graph of $G$ so that $V(\gL) = V_1 \sqcup V_2$ as in Definition~\ref{def:deg_ref_G}. Suppose in the degree partition of $\Lambda$, the vertices in $V_1$ are contained in blocks $T_1, \ldots, T_n$ and the vertices in $V_2$ are contained in blocks $F_1, \ldots, F_m$. Let $\chi_i = \displaystyle\sum_{G_v \in F_i^G} \chi(G_v)$. Suppose the blocks $\{F_i\}_{i=1}^m$ are indexed such that $\chi_i \geq \chi_j$ for $i \geq j$.
    The {\it block Euler characteristic vector of $G$} is $(\chi_1, \ldots, \chi_m)$.
  \end{defn}
  
  \begin{defn}
   Vectors $v, v' \in \R^n$ are {\it commensurable} if there exist non-zero integers $K, K' \in \Z$ so that $Kv = K'v'$. 
  \end{defn}  
  
  \begin{prop} \label{prop_block_comm}
   If $G, G' \in \cC$ are abstractly commensurable, then the block Euler characteristic vector of $G$ is commensurable to the block Euler characteristic vector of $G'$. 
  \end{prop}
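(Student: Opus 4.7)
The plan is to show that passing to a finite-index subgroup of index $d$ multiplies the block Euler characteristic vector entry-wise by the factor $d$, and then to apply this on both sides of a commensuration to get the desired proportionality.

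First I would fix finite-index subgroups $H \leq G$ and $H' \leq G'$ with $H \cong H'$, and adopt the labeling conventions of Notation~\ref{nota:subgroup}; set $d = [G:H]$ and $d' = [G':H']$. By Proposition~\ref{prop:sub_struc}, each vertex group $H_w$ of the JSJ decomposition of $H$ has the form $H \cap g_w G_v g_w^{-1}$ for some $v \in V(\gL)$ and $g_w \in G$, and Lemma~\ref{lemma:same_block} guarantees that $G_v \in F_i^G$ if and only if $H_w \in F_i^H$. Thus $F_i^H$ is partitioned as $\bigsqcup_{G_v \in F_i^G} \cH_v$, with matching block index $i$.

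Next, for each $v$ with $G_v \in F_i^G$, Lemma~\ref{lemma:chi_mult} gives $\chi(H_w) = [G_v:H_w]\,\chi(G_v)$ for every $H_w \in \cH_v$, while Lemma~\ref{lemma:sum_chi} gives $\sum_{H_w \in \cH_v} [G_v:H_w] = d$. Summing,
\[
  \chi_i^H \;=\; \sum_{G_v \in F_i^G}\,\sum_{H_w \in \cH_v} \chi(H_w) \;=\; \sum_{G_v \in F_i^G} d\cdot\chi(G_v) \;=\; d\cdot \chi_i^G.
\]
Since $d>0$ preserves the sort order used in the definition of the block Euler characteristic vector, this shows the block Euler characteristic vector of $H$ is $d$ times that of $G$; the identical argument on the other side gives that the vector of $H'$ is $d'$ times that of $G'$.

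Finally, the block Euler characteristic vector is a group invariant (the JSJ decomposition is canonical and $\chi$ of a Fuchsian group is an isomorphism invariant), so $H \cong H'$ forces the vectors of $H$ and $H'$ to coincide. Combining the three observations yields $d\cdot(\chi_1^G,\ldots,\chi_m^G) = d'\cdot(\chi_1^{G'},\ldots,\chi_m^{G'})$, which is the desired commensurability. The only delicate point is the index-matching bookkeeping ensuring $\chi_i^H = d\cdot\chi_i^G$ with matching index $i$ rather than merely a permutation of indices, and this is precisely what Lemma~\ref{lemma:same_block} supplies; everything else reduces to the multiplicativity of the Fuchsian Euler characteristic.
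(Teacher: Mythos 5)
Your proposal is correct and follows essentially the same route as the paper: it combines Lemma~\ref{lemma:sum_chi} and Lemma~\ref{lemma:chi_mult} to get $\sum_{H_w \in \cH_v}\chi(H_w) = d\,\chi(G_v)$, uses Lemma~\ref{lemma:same_block} to see $F_i^H = \bigcup_{G_v\in F_i^G}\cH_v$, and then equates the block sums of $H$ and $H'$ via $H\cong H'$ to conclude $d(\chi_1,\ldots,\chi_m)=d'(\chi_1',\ldots,\chi_m')$. The only cosmetic difference is that you invoke the multiplicativity lemma explicitly and phrase the last step as invariance of the sorted vector under isomorphism, where the paper relies on the index conventions of Notation~\ref{nota:subgroup}.
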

  \begin{proof}
    Let $(\chi_1, \ldots, \chi_m)$ and $(\chi_1', \ldots, \chi_m')$ be the block Euler characteristic vectors of $G$ and $G'$, respectively, where $\chi_i = \displaystyle \sum_{G_v \in F_i^G} \chi(G_v)$ and $\chi_i' = \displaystyle\sum_{G_{v'}'\in F_i^{G'}} \chi(G_{v'}')$.
    Suppose $H \leq G$ and $H' \leq G'$ are finite-index subgroups with $H \cong H'$. Suppose $[G:H]=d$ and $[G':H'] = d'$. By Lemma~\ref{lemma:sum_chi}, for every $v \in V(\gL)$ and $v' \in V(\gL')$,
    \[d\cdot \chi(G_v) = \sum_{H_w \in \cH_v} \chi(H_w) \quad \text{ and } \quad d'\cdot \chi(G'_{v'}) = \sum_{H'_{w'} \in \cH'_{v'}} \chi(H'_{w'}). \]
    Furthermore, by Lemma~\ref{lemma:same_block},
    \[\bigcup_{v \in F_i^G} \cH_v = F_i^H \quad \text{ and } \quad \bigcup_{v' \in F_i^{G'}} \cH'_{v'} = F_i^{H'}.\]
    Therefore,
    \[ d\cdot \sum_{G_v \in F_i^G}  \chi(G_v)
    \,\, = \,\, \sum_{H_w \in F_i^H} \chi(H_w)
    \,\, = \,\, \sum_{H'_{w'} \in F_i^{H'}} \chi(H'_{w'})
    \,\, = \,\, d'\cdot \sum_{G_v' \in F_i^{G'}} \chi(G_v'). \]
    So, $d (\chi_1, \ldots, \chi_m) = d' (\chi_1', \ldots, \chi_m')$. 
  \end{proof}

  \subsection{Matching Euler characteristic vector}

  Suppose $G \in \cC$ has JSJ decomposition with underlying graph $\Lambda$ with $V(\gL)= V_1 \sqcup V_2$ as in Definition~\ref{def:deg_ref_G}. If all vertices in $V_1$ have the same valance, then the block Euler characteristic vector of $G$ has one entry. Hence, the block Euler characteristic vector does not distinguish commensurability classes in this setting. So, we define a finer invariant in this subsection to deal with this case. Results in this section are proved only within the subclass of geometric amalgams of free groups. 

  If $\Gamma$ is a graph, a {\it matching} in $\Gamma$ is a collection of disjoint edges whose vertex set is exactly the vertex set of $\Gamma$. This notion extends to subsurfaces in a $2$-dimensional hyperbolic $P$-manifold, and a necessary criterion for commensurability can be stated in these terms. 
  
  \begin{defn}
   Let $X$ be a $2$-dimensional hyperbolic $P$-manifold. A {\it matching} in $X$ is a (not necessarily connected) subsurface in $X$ (see Definition~\ref{def:geom_amal}) whose boundary is exactly the set of branching curves in $X$. In particular, a matching does not contain any branching curves in its interior, and each branching curve is incident to exactly one connected subsurface with boundary in the matching. A {\it maximal matching} in $X$ is a matching which has the greatest Euler characteristic of any matching in~$X$. 
  \end{defn}

  \begin{lemma}[Existence of a matching]
   Let $X$ be a possibly disconnected hyperbolic $P$-manifold over a finite forest, and suppose each branching curve in $X$ is incident to exactly $n$ subsurfaces. Then $X$ admits a matching. 
  \end{lemma}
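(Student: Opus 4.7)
My plan is to reduce to the connected case and then induct on the number of $V_1$ vertices in each tree. Since $X$ is a $P$-manifold over a finite forest, each connected component is itself a $P$-manifold over a tree, and a matching of $X$ is the disjoint union of matchings on the components. Fix one component with underlying tree $T$ and write $V(T) = V_1 \sqcup V_2$ as in Definition~\ref{def:geom_amal}. A matching of this component corresponds exactly to a subset $S \subseteq V_2$ for which every $u \in V_1$ has precisely one neighbor in $S$: each branching curve must lie in the boundary of the chosen subsurface (at least one neighbor in $S$) and cannot lie in its interior (at most one). The hypothesis becomes: every $u \in V_1$ has valence exactly $n$ in $T$, where $n \geq 3$.

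The combinatorial heart of the argument is the following observation: in any finite tree $T$ in which every $V_1$ vertex has valence $n \geq 3$, some $u^* \in V_1$ has at most one non-leaf neighbor. To see this, take a longest path $P$ in $T$; since $V_1$ vertices have valence $\geq 3$, both endpoints of $P$ are leaves and therefore lie in $V_2$. Let $v^*$ be one endpoint, $u^* \in V_1$ its unique neighbor, and $v_1 \in V_2$ the next vertex of $P$ after $u^*$. If any other neighbor $v_i$ of $u^*$ were non-leaf, then $v_i$ would admit a neighbor $u'' \neq u^*$; prepending a longest path in the subtree beyond $v_i$ to the tail $v_i - u^* - v_1 - \cdots$ of $P$ would produce a strictly longer path than $P$, a contradiction.

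I would then induct on $|V_1(T)|$. In the base case $|V_1(T)| = 1$, the tree is a star with center $u^*$ and $n$ leaves in $V_2$, so $S = \{v\}$ for any leaf $v$ works. For the inductive step, pick $u^*$ as above, let $v_2, \dots, v_n$ be its leaf neighbors and $v_1$ its non-leaf neighbor, and set $T' := T \setminus \{u^*, v_2, \dots, v_n\}$. Then $T'$ is a connected subtree with $|V_1(T')| = |V_1(T)| - 1$, and each $V_1$ vertex of $T'$ retains all $n$ of its original neighbors because the deleted vertices $v_2, \dots, v_n$ are leaves adjacent only to $u^*$. By induction, $T'$ admits a matching $S'$. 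Set $S := S'$ if $v_1 \in S'$, and $S := S' \cup \{v_2\}$ otherwise. In either case $u^*$ has exactly one neighbor in $S$, and the constraint at every $u \in V_1(T')$ is unaffected because its neighborhood lies entirely in $V_2(T')$ and is therefore untouched by the extension.

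The main obstacle is the longest-path observation that produces a vertex $u^*$ with at most one non-leaf neighbor; once this is in hand, the induction is essentially bookkeeping. A secondary point to verify carefully is that $T'$ continues to satisfy the hypothesis that every $V_1$ vertex has valence exactly $n$, which holds precisely because $v_2, \dots, v_n$ are leaves meeting no $V_1$ vertex other than $u^*$, so no remaining $V_1$ vertex loses a neighbor when they are removed.
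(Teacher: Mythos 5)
Your proof is correct, but it runs along a different line than the paper's. The paper argues greedily and topologically: it seeds the matching with an arbitrary connected subsurface $\Sigma_1$, removes the set $X_1$ of all subsurfaces meeting $\partial \Sigma_1$, observes that because the underlying graph is a forest each newly exposed branching curve still meets $n-1\geq 2$ subsurfaces of the complement $X_1'$ (and no two of the newly exposed curves force conflicting choices), chooses one subsurface at each such curve, and propagates outward until the forest is exhausted. You instead reduce to components, recast a matching as a subset $S\subseteq V_2$ of the underlying tree such that every curve vertex has exactly one neighbor in $S$ (a translation that is legitimate precisely because a forest has no multi-edges, so ``one neighbor'' and ``one incident edge with endpoint in $S$'' coincide -- a point worth one sentence), and then induct on $|V_1|$: a longest-path argument produces a curve vertex $u^*$ with at most one non-leaf neighbor, and pruning the pendant star at $u^*$ preserves the valence hypothesis, so the matching of the smaller tree extends by adding one deleted leaf if needed. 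Both arguments use acyclicity in an essential way to prevent conflicting choices; yours packages it into a single combinatorial observation and an induction with a clean invariant and base case, which makes the verification more transparent, while the paper's greedy construction is shorter and stays closer to the topological picture used later (weights on curves, covers), at the cost of leaving the ``repeat this procedure'' step somewhat informal. Your implicit standing assumption $n\geq 3$ (really only $n\geq 2$ is needed for your argument) matches the paper's own implicit use of $n-1>1$, so it is not a defect relative to the source.
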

  \begin{proof}
    Let $X$ be a possibly disconnected hyperbolic $P$-manifold over a finite forest $T$, and suppose each branching curve in $X$ is incident to exactly $n$ subsurfaces.
    Suppose $X$ has branching curves $c_1, \ldots, c_m$. To build a matching $\cM$ of $X$, choose $\Sigma_1 \subset X$, a subsurface of $X$, and let $\Sigma_1 \in \cM$. Without loss of generality, $\Sigma_1$ has boundary $c_1, \ldots, c_k$ for some $k \leq m$. 
   Let $X_1 \subset X$ be the set of subsurfaces in $X$ which have at least one boundary component in $\{c_1, \ldots, c_k\}$.
   Without loss of generality, the set of boundary curves of subsurfaces in $X_1$ is $\{c_1, \ldots, c_{k+\ell}\}$.
   Let $X_1' = \overline{X \setminus X_1}$, a hyperbolic $P$-manifold over a finite forest.    
   Since $T$ is a finite forest, each curve in $\{c_{k+1}, \ldots, c_{k+\ell}\}$ has degree $n-1>1$ in $X_1'$. So, it is possible to add to $\cM$ one subsurface in $X_1'$ incident to each curve in $\{c_{k+1}, \ldots, c_{k+\ell}\}$. Repeat this procedure with each new surface chosen for $\cM$ in the place of $\Sigma_1$ to produce a matching $\cM$ in finitely many steps.     
  \end{proof}

  \begin{example} \label{example:no_matching}
   There are $2$-dimensional hyperbolic $P$-manifolds for which each branching curve has the same degree and for which there does not exist a matching. Indeed, let $X$ be a $2$-dimensional hyperbolic $P$-manifold whose fundamental group has JSJ graph $\Omega$ shown in Figure~\ref{figure:no_matching}. The white vertices in $\Omega$ correspond to $2$-ended vertex groups; the black vertices correspond to maximally hanging Fuchsian vertex groups.
  \end{example}
  
       \begin{figure}
      \begin{overpic}[scale=.3, tics=5]{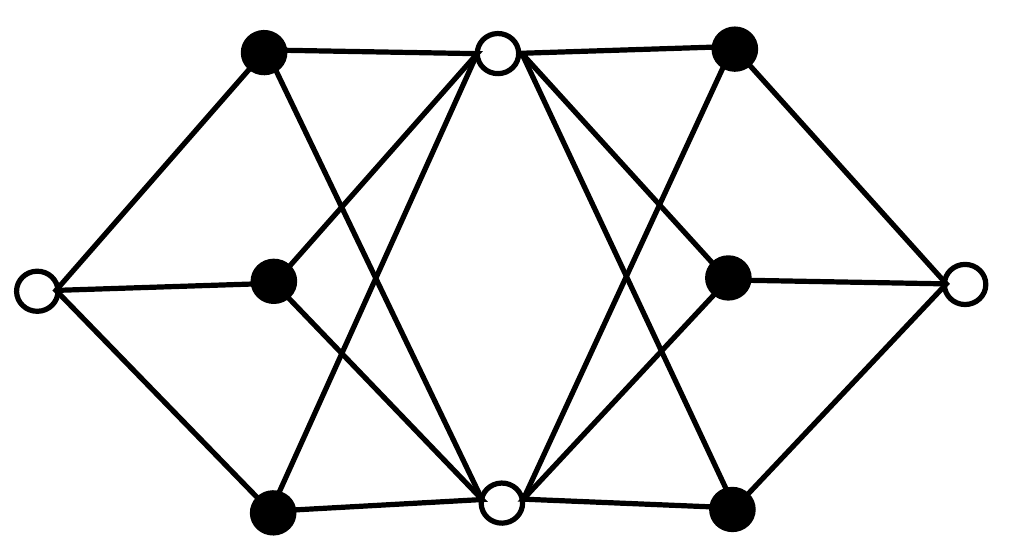}
      \put(0,40){$\Omega$}
        \end{overpic}
	\caption{{\small The JSJ graph $\Omega$ of a $2$-dimensional hyperbolic $P$-manifold whose fundamental group does not admit a matching. }}
      \label{figure:no_matching}
     \end{figure}

  \begin{defn}
   Let $X$ be a $2$-dimensional hyperbolic $P$-manifold with underlying graph a tree, and suppose each branching curve in $X$ is incident to exactly $n$ subsurfaces. Let $\cM_1$ be a maximal matching in $X$, and let $\cM_i$ be a maximal matching in $X \setminus \Int(\bigcup_{k=1}^{i-1} \cM_i)$ for $i=2, \ldots , n$. (Note that in a slight abuse of notation, we still refer to the image of the branching curves of $X$ in $X \setminus \Int(\bigcup_{k=1}^{n-2} \cM_i)$ as ``branching'' even though at this step the space is a manifold; similarly for $X \setminus \Int(\bigcup_{k=1}^{n-1} \cM_i) $.)    
   The {\it matching Euler characteristic vector} of $X$ is
   \[(\chi(\cM_1), \chi(\cM_2), \ldots, \chi(\cM_n)). \]
  \end{defn}
  
      The proof of the following proposition generalizes \cite[Proposition 3.3.2]{stark} and \cite[Proposition 6.4]{danistarkthomas}
  
  \begin{prop} \label{prop_matching_comm}
     Suppose $X$ and $X'$ are $2$-dimensional hyperbolic $P$-manifolds, the underlying graphs of $X$ and $X'$ are trees, and each branching curve in $X$ and $X'$ is incident to exactly $n$ subsurfaces with boundary. Let $v$ and $v'$ be the matching Euler characteristic vectors of $X$ and $X'$, respectively. If $\pi_1(X)$ and $\pi_1(X')$ are abstractly commensurable, then $v$ and $v'$ are commensurable vectors. 
  \end{prop}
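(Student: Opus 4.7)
The plan is to mirror the strategy of Theorem~\ref{thm:qi_notAC}: apply Lafont's topological rigidity to upgrade abstract commensurability to a homeomorphism between finite covers, and then show the matching Euler characteristic vector transforms linearly under such covers. By \cite[Theorem 1.2]{lafont}, there exist $2$-dimensional hyperbolic $P$-manifolds $Y, Y'$, finite covering maps $p:Y\to X$ and $p':Y'\to X'$ of some degrees $d, d'$, and a homeomorphism $f:Y\to Y'$. Since $f$ preserves the singular locus and the subsurface decomposition, it identifies matchings of $Y$ with matchings of $Y'$ and preserves their Euler characteristics, so the two sides have equal matching Euler characteristic vectors once these vectors are shown to be well defined.

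The heart of the argument is the identity $v_Y = d\cdot v$ (and analogously $v_{Y'} = d'\cdot v'$), where $v, v'$ are the matching Euler characteristic vectors of $X, X'$. The starting observation is that the preimage $p^{-1}(\cM)$ of any matching $\cM \subset X$ is itself a matching in $Y$ satisfying $\chi(p^{-1}(\cM)) = d\cdot\chi(\cM)$: for each branching curve $c$ of $Y$, the star of $c$ in the underlying graph of $Y$ projects bijectively onto the star of $p(c)$, and $p(c)$ is incident to exactly one subsurface of $\cM$, so $c$ is incident to exactly one subsurface of $p^{-1}(\cM)$; multiplicativity of Euler characteristic under finite covers then gives the Euler characteristic identity. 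In particular $\chi(\cM_1^Y) \geq d \cdot \chi(\cM_1^X)$.

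The opposite inequality $\chi(\cM_1^Y) \leq d\cdot\chi(\cM_1^X)$ is the main technical step: every matching $\cN$ of $Y$ must descend to a matching of $X$ with Euler characteristic at least $\chi(\cN)/d$. My plan is to pass to the Galois closure of $p$ with deck group $\G$ and average the local choices made by $\cN$ over $\G$-orbits of branching curves to produce a $\G$-equivariant matching of no smaller Euler characteristic, using the tree structure of the underlying graph of $X$ to ensure that these averaged choices propagate consistently and descend to a bona fide matching of $X$. Once the equality $\chi(\cM_1^Y) = d\cdot\chi(\cM_1^X)$ is established, one may take $\cM_1^Y = p^{-1}(\cM_1^X)$; the complement $Y\setminus\Int(\cM_1^Y)$ is then a degree-$d$ cover of $X\setminus\Int(\cM_1^X)$ in which each branching curve has degree $n-1$, and the same argument applies inductively to the subsequent maximal matchings, yielding $\chi(\cM_i^Y) = d\cdot\chi(\cM_i^X)$ for every $i$.

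Combining all of this gives $d\cdot v = v_Y = v_{Y'} = d'\cdot v'$, so $v$ and $v'$ are commensurable vectors. The main obstacle is the descent/averaging step; a secondary subtlety is that the underlying graph of $Y$ need not itself be a tree (a connected cover in the graph-of-spaces sense can develop cycles), so the matching Euler characteristic vector of $Y$ requires justification beyond the verbatim tree-based definition given in the paper, either by choosing the cover sufficiently carefully or by extending the definition to the slightly more general setting in which matchings arise as preimages.
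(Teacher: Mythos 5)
Your overall framing (Lafont rigidity to get covers $p:Y\to X$, $p':Y'\to X'$ and a homeomorphism $f:Y\to Y'$, then compare Euler characteristics) matches the paper's starting point, but the heart of your plan is left unproven, and it is exactly where all the work lies. You need the inequality $\chi(\cM_1^Y)\le d\,\chi(\cM_1^X)$, i.e.\ that any matching of the cover descends to a matching of $X$ of Euler characteristic at least $\chi(\cN)/d$. Your proposed mechanism -- pass to the Galois closure and ``average the local choices over $\G$-orbits of branching curves'' -- is not an argument: a matching is a \emph{consistent} choice (if a subsurface is selected at one of its boundary curves it must be selected at all of them), and the orbit-wise ``average'' at a curve need not be compatible with the averages at the other boundary curves of the selected subsurface. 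Producing a $\G$-equivariant matching of no smaller Euler characteristic is precisely equivalent to the descent statement you are trying to prove, so the averaging language begs the question. The paper's proof supplies this missing content in its Claim: it pushes the single collection $f(p^{-1}(\cM_1))$ down to $X'$ with multiplicities $d_{I_i}$, and then, using the tree structure of the underlying graph of $X'$ and a weight-counting argument at the branching curves, greedily partitions the resulting weighted multiset of subsurfaces into $D'$ genuine matchings of $X'$, each bounded by $\chi(\cM_1')$ by maximality. (One could also phrase this as an integrality/total-unimodularity statement for the curve--subsurface incidence matrix of a forest, but some such combinatorial decomposition must actually be carried out; you have only asserted a plan for it and yourself flag it as ``the main obstacle.'')

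A second, structural gap: your route requires defining and using the matching Euler characteristic vector of $Y$ and $Y'$, whose underlying graphs need not be trees, and moreover requires knowing that the later entries $\chi(\cM_i^Y)$ do not depend on which maximal matchings were chosen at earlier stages (you take $\cM_1^Y=p^{-1}(\cM_1^X)$ on one side, while $f$ need not carry this to $p'^{-1}(\cM_1^{X'})$ on the other). The paper avoids both issues entirely: it never forms maximal matchings of the covers, but works only with the specific surface collection $f(p^{-1}(\cM_i))$ and compares it directly with $D'\chi(\cM_i')$ downstairs, using the equality case and the count of incidences at branching curves to set up the induction on the remaining matchings of strictly smaller Euler characteristic. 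As written, your proposal identifies the right quantities but defers the two genuinely delicate steps (descent of matchings, and well-definedness for the covers) rather than proving them.
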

  
  \begin{proof}
   Suppose that $X$ and $X'$ are $2$-dimensional hyperbolic $P$-manifolds, the underlying graphs of $X$ and $X'$ are trees, and each branching curve in $X$ and $X'$ is incident to $n$ branching curves. Let $v = (\chi(\cM_1), \ldots, \chi(\cM_n))$ and $v' = (\chi(\cM_1'), \ldots, \chi(\cM_n'))$ be the matching Euler characteristic vectors of $X$ and $X'$, respectively. Suppose that $\pi_1(X)$ and $\pi_1(X')$ are abstractly commensurable. We seek to prove that $v$ and $v'$ are commensurable. 
   
   Since $\pi_1(X)$ and $\pi_1(X')$ are abstractly commensurable, there are finite covering spaces $p:Y \rightarrow X$ and $p':Y' \rightarrow X'$ and a homeomorphism $f:Y \rightarrow Y'$ by \cite[Theorem 1.2]{lafont}. Suppose that $p$ is a degree $D$ cover and $p'$ is a degree $D'$ cover. We will show $Dv = D'v'$. 
   
   Suppose that  
   \begin{center}
    $\chi(\cM_1) = \ldots = \chi(\cM_s) > \chi(\cM_{s+1})\geq \ldots \geq \chi(\cM_{n}) $,
    
    $\chi(\cM_1') = \ldots = \chi(\cM_t') > \chi(\cM_{t+1}')\geq \ldots \geq \chi(\cM_{n}') $.
   \end{center}

   Without loss of generality, we may assume that $D\chi(\cM_1) \geq D'\chi(\cM_1')$ and if $D(\chi(\cM_1) = D'\chi(\cM_1')$ then $s \geq t$. 
   
   Consider the matching $f(p^{-1}(\cM_1)) = \{S_1', \ldots, S_r'\} \subset Y'$, a disjoint collection of connected subsurfaces of $Y'$ whose boundary is exactly the set of branching curves of $Y'$. Let $\{c_1, \ldots, c_m\}$ be the set of branching curves of $X'$. Each surface $S_i' \in f(p^{-1}(\cM_1))$ covers a subsurface $S_{I_i}$ of $X'$, where $S_{I_i}$ has boundary $\{c_j \,|\, j \in I_i\}$ for some $I_i \subset \{1, \ldots, m\}$. Suppose $S_i'$ covers $S_{I_i}$ by degree $d_{I_i}$ with $1 \leq d_{I_i} \leq D'$. 
   
   \noindent {\it Claim:} $\displaystyle \sum_{i=1}^r d_{I_i}\cdot \chi(S_{I_i}) \leq D'\chi(\cM_1')$.
   \begin{proof}[Proof of Claim.]
    The inequality holds by the definition of maximal matching if $D' = 1$. In general, the surfaces in $\{S_{I_i}\}_{i=1}^r$ need not form a matching of $X'$. However, we show that if these surfaces are counted with the right multiplicity, then they can be partitioned into $D'$ matchings of $X'$. 
    
    If $D' >1$, we will partition the surfaces in the set
    \[ \cS = \{\underbrace{S_{I_1}, \ldots, S_{I_1}}_{d_{I_1}}, \underbrace{S_{I_2}, \ldots, S_{I_2}}_{d_{I_2}}, \ldots, \underbrace{S_{I_r}, \ldots, S_{I_r}}_{d_{I_r}} \} \] into $D'$ matchings of $X'$ called $\cN_1, \ldots, \cN_{D'}$. The sum of the Euler characteristics of the surfaces in the $D'$ matchings constructed is equal to the left-hand side of the inequality in the claim. The conclusion of the claim will follow by the definition of maximal matching. For $i \in \{1,\ldots, r\}$, we call the value $d_{I_i}$ the {\it weight} of the surface $S_{I_i}$. The weight of $S_{I_i}$ records the contribution of $\chi(S_{I_i})$ to the Euler characteristic of $f(p^{-1}(\cM_1))$. If $c_j$ is a boundary curve of the surface $S_{I_i}$, we say $d_{I_i}$ is the weight at $c_j$ coming from $S_{I_i}$. The total weight at the curve $c_j$ is $\sum_{j \in I_i} d_{I_i} = D'$ since $p':Y' \rightarrow X'$ is a degree-$D'$ cover. 
    
    Construct the set of $D'$ matchings recursively. To construct the first matching $\cN_1$ built out of surfaces in $\cS$, choose a surface $S_{I_i} \in \cS$. The surface $S_{I_i}$ is incident to curves $\{c_j \,|\, j \in I_i\}$. View the underlying graph of $X$ as a bipartite graph with white vertices corresponding to branching curves and black vertices corresponding to surfaces with boundary. Let $c$ be a curve corresponding to a white vertex at distance two (in the graph) from $c_j$ for some $j \in I_i$ and at distance three (in the graph) from the black vertex corresponding to $S_{I_i}$, if such a curve exists. There is a surface in $\cS$ incident to $c$ and not to $c_j$. That is, both curves $c$ and $c_j$ have total weight $D'$ coming from a collection of surfaces incident to these curves. Some of the weight at $c_j$ comes from $S_{I_i}$, so the (unique) surface $S$ incident to both $c_j$ and $c$ has weight less than $D'$. Thus, some of the weight at $c$ comes from a surface $S_{I_k}$ different from $S$. Let $S_{I_i} \cup S_{I_k} \in \cN_1$. Since the underlying graph of $X$ is a tree, this selection may be continued (finitely many times) to build the matching $\cN_1$ of surfaces contained in $\cS$. 
   
    Build the remaining $D'-1$ matchings $\cN_2, \ldots, \cN_{D'}$ similarly. First form the set $\cS_1$ by removing from $\cS$ one copy of each surface contained in $\cN_1$. Then, the matching $\cN_2$ may be chosen analogously to $\cN_1$, where each branching curve in $X'$ now has weight $D'-1$ associated to it. This process may be continued to build the $D'$ matchings $\cN_1, \ldots, \cN_{D'}$. Furthermore, by construction and since $\cM_1'$ is a maximal matching of $X'$, 
    \[\sum_{i=1}^r d_{I_i}\cdot\chi(S_{I_i}) =     \chi(\cN_1) + \ldots + \chi(\cN_{D'}) \leq D'\chi(\cM_1')  \] concluding the proof of the claim.    
   \end{proof}
   
   Thus, 
   \[D\chi(\cM_1) \,\,=\,\, \chi(f(p^{-1}(\cM_1)))
    \,\,=\,\, \sum_{i=1}^r d_{I_i}\cdot\chi(S_{I_i}) 
    \,\,\leq \,\, D'\chi(\cM_1'), \] where the last inequality is given by the claim above. Since $D\chi(\cM_1) \geq D'\chi(\cM_1')$ by assumption, $D\chi(\cM_1) = D'\chi(\cM_1')$. Each branching curve in $Y'$ is incident to exactly $s$ connected surfaces in $f(p^{-1}(\cM_1)) \cup \ldots \cup f(p^{-1}(\cM_s))$. Thus, $p'(f(p^{-1}(\cM_1)) \cup \ldots \cup f(p^{-1}(\cM_s)) )$ must have in its image at least $s$ surfaces in the matchings $\cM_1', \ldots, \cM_n'$; so, $t \geq s$. Therefore, $D\chi(\cM_i) = D'\chi(\cM_i')$ for $1 \leq i \leq s=t$ and $\bigcup_{i=1}^s f(p^{-1}(\cM_i)) = \bigcup_{i=1}^s p^{-1}(\cM_i)$. So, the above argument can be repeated (at most finitely many times) with the remaining matchings in $X$ and $X'$ of strictly smaller Euler characteristic, proving the proposition. 
  \end{proof}


    \section{Quasi-isometry versus abstract commensurability} \label{sec:qi_vs_ac}

   \begin{thm}
    There are infinitely many abstract commensurability classes within every quasi-isometry class in $\cC$ that contains a group with JSJ graph a tree. 
   \end{thm}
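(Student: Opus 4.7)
The plan is to fix a quasi-isometry class $\mathcal{Q} \subset \cC$ containing a group with JSJ graph a tree, and to construct inside $\mathcal{Q}$ an infinite sequence $\{G_k\}_{k\ge 1}$ of groups distinguished pairwise by one of the commensurability invariants of Section~\ref{sec:comm_cond}. The key flexibility I will exploit is that an MHF vertex group $G_v$ at a JSJ vertex $v$ may be replaced by any bounded Fuchsian group with the same peripheral subgroups as $G_v$: such a substitution preserves the JSJ graph, and by Theorem~\ref{thm:qi_same_deg_ref} the resulting group remains in $\mathcal{Q}$. Starting from any $G\in\mathcal{Q}$ whose JSJ graph is a tree $T$, I fix a designated MHF vertex $v$ and, for each $k\ge 1$, define $G_k$ by replacing $G_v$ with a bounded Fuchsian group $G_v^{(k)}$ having the same number of peripheral subgroups as $G_v$ but strictly decreasing Euler characteristic $\chi(G_v^{(k)}) = \chi(G_v) - k$, achieved by increasing the genus of the convex core; all other vertex groups of $G$ remain unchanged.

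Let $m$ denote the number of blocks of MHF vertices in the degree partition of $T$. When $m\ge 2$, I choose $v$ to lie in block $F_1$, so that the block Euler characteristic vector of $G_k$ is $v_k=(\chi_1(G)-k,\chi_2(G),\ldots,\chi_m(G))$, with the last $m-1$ coordinates constant in $k$ and the first strictly decreasing. For $k\ne k'$, if $G_k$ and $G_{k'}$ were abstractly commensurable, Proposition~\ref{prop_block_comm} would produce non-zero integers $K,K'$ with $Kv_k=K'v_{k'}$; comparing any of the last $m-1$ coordinates (all fixed and non-zero) forces $K=K'$, and comparing the first then forces $k=k'$, a contradiction. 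Thus in the generic case $m\ge 2$ the family $\{G_k\}$ already realizes infinitely many commensurability classes in $\mathcal{Q}$.

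The remaining case $m=1$ requires the finer invariant of Proposition~\ref{prop_matching_comm}. Using the restrictions that (M1), (M2), and $m=1$ impose on the augmented graph of blocks---in particular, (M2) permits multi-edges from at most one 2-ended block to the unique MHF block---one expects to show that $\mathcal{Q}$ can be realized as the fundamental group of a 2-dimensional hyperbolic $P$-manifold whose branching curves all have the same valence $n$, so that the matching Euler characteristic vector is defined. Arranging the modification $G_v^{(k)}$ so that, for large $k$, the distinguished surface piece has strictly smaller Euler characteristic than every other surface piece (and thus enters only the last maximal matching $\mathcal{M}_n$), the matching Euler characteristic vector of $G_k$ takes the form $(c_1,\ldots,c_{n-1},c_n-k)$ with $c_1,\ldots,c_{n-1}$ independent of $k$. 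The same coordinate-comparison argument, combined with Proposition~\ref{prop_matching_comm}, then yields infinitely many commensurability classes.

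The principal obstacle lies in the $m=1$ case: one must verify both that $\mathcal{Q}$ admits a uniform-valence $P$-manifold realization under (M1), (M2), and $m=1$, and that the modifications $G_v\mapsto G_v^{(k)}$ can indeed be arranged to vary exactly one coordinate of the matching vector (controlling which matching the distinguished surface enters). Both require a careful analysis of the bipartite tree $T$ and the greedy construction of maximal matchings. Once these structural facts are in place, the rest of the proof reduces to the same coordinate-comparison trick used in the case $m\ge 2$.
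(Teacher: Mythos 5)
Your strategy is the same as the paper's: stay inside the quasi-isometry class by changing only the Euler characteristic of a single MHF piece (the paper does this topologically, swapping one subsurface of a $P$-manifold realization $X$ for a higher-genus one), and then separate commensurability classes using Proposition~\ref{prop_block_comm} when the block Euler characteristic vector has more than one entry and Proposition~\ref{prop_matching_comm} otherwise. Your $m\ge 2$ branch is complete and is essentially the paper's argument (the paper's case split, ``not all branching curves have the same degree,'' is equivalent to $m\ge 2$ for tree $P$-manifolds), up to cosmetic points: a genus increase changes $\chi$ by $-2$ per handle, so the varying entry is $\chi_1-2k$ rather than $\chi_1-k$, and since the block vector is sorted one should compare blockwise via the common degree refinement rather than literally ``the last $m-1$ coordinates''; neither affects the argument.

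The genuine gap is the $m=1$ branch, which you flag but do not close. Two remarks on how to close it. (a) The uniform-valence realization is quick: if there is a single Fuchsian block, then every Fuchsian vertex must have a neighbor in every $2$-ended block; but a finite tree whose $2$-ended vertices have valence at least three has leaves, and those leaves are Fuchsian and meet only one block, so there is exactly one $2$-ended block and hence all branching curves have the same degree $n$. (b) Your assertion that making the distinguished piece $\Sigma$ very negative forces it ``to enter only the last maximal matching $\cM_n$'' does not follow from $\chi(\Sigma)$ being smallest: maximality only keeps $\Sigma$ out of $\cM_i$ if a matching avoiding $\Sigma$ exists at that stage, which is a combinatorial claim about the tree, and at the late stages (when curves have few remaining incident pieces) it can fail. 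In fact you need much less: it suffices that \emph{some} matching of $X$ avoids $\Sigma$ (provable by a rooted greedy construction on the tree using branching degree $\ge 3$, in the spirit of the paper's existence lemma). Then for $k$ large $\chi(\cM_1)$ is independent of $k$, while $\sum_{i=1}^n \chi(\cM_i)=\chi(X_k)$ determines $k$; in the commensurability equation $Kv_k=K'v_{k'}$ the first fact forces $K=K'$ and the second forces $k=k'$. The paper itself is brief at this point (it simply asserts that changing one subsurface changes the commensurability type of the matching vector), but since you explicitly leave the step as an unresolved ``principal obstacle,'' your proposal as written is incomplete in exactly this case, though the approach is the right one and the missing pieces are fillable.
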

    \begin{proof}
      Let $\cQ$ be a quasi-isometry class in $\cC$ that contains a group with JSJ graph a tree $T$. Let $G \in \cQ$ be a geometric amalgam of free groups with JSJ graph $T$ so that $G \cong \pi_1(X)$ for some $2$-dimensional hyperbolic $P$-manifold $X$. Choose a subsurface $\Sigma \subset X$. Exchange $\Sigma$ with a surface $\Sigma_g$ with the same number of boundary components as $\Sigma$ and with genus $g \geq 1$. Let $X_g$ denote the resulting $2$-dimensional hyperbolic $P$-manifold, and let $G_g \cong \pi_1(X_g)$. The groups $G_g$ and $G$ are quasi-isometric for every $g \geq 1$, but $G_g$ and $G_h$ are abstractly commensurable if and only if $g=h$. Indeed, if not all branching curves of $X_g$ have the same degree, let $v_g$ be the block Euler characteristic vector of $G_g$, and      
      if all branching curves of $X_g$ have the same degree, let $v_g$ be the  matching Euler characteristic vector of $G_g$. Changing the Euler characteristic of one subsurface of $X_g$ changes the commensurability type of the vector $v_g$, hence the claim follows from Proposition~\ref{prop_block_comm} and Proposition~\ref{prop_matching_comm}. 
    \end{proof}

    \begin{remark}
     The proof above applies to any quasi-isometry class in $\cC$ for which the degree refinement contains more than one row coming from maximal hanging Fuchsian vertex groups, since in this case the block Euler characteristic vector has more than one entry. 
    \end{remark}

\bibliographystyle{alpha}
\bibliography{refs}

\end{document}